\def\thesection{\arabic{section}}
\def\theequation{\thesection.\arabic{equation}}
\newcommand{\ds} {\displaystyle}
\newcommand{\e}{\epsilon}
\newcommand{\al} {\alpha}
\newcommand{\ba} {\beta}
\newcommand{\de} {\delta}
\newcommand{\Om} {\Omega}
\newcommand{\ra} {\rightarrow}
\newcommand{\De} {\Delta}
\newcommand{\la} {\lambda}
\newcommand{\La} {\Lambda}
\newcommand{\noi} {\noindent}
\newcommand{\na} {\nabla}
\newcommand{\mb} {\mathbb}
\newcommand{\mc} {\mathcal}
\newcommand{\ld} {\langle}
\newcommand{\rd} {\rangle}
\newcommand{\I}{\int\limits_}
\def\theequation{\@arabic{\c@section}.\@arabic{\c@equation}}
\def\QED{\hfill {$\square$}\goodbreak \medskip}
\newtheorem{Theorem}{Theorem}[section]
\newtheorem{Lemma}[Theorem]{Lemma}
\newtheorem{Proposition}[Theorem]{Proposition}
\newtheorem{Corollary}[Theorem]{Corollary}
\newtheorem{Remark}[Theorem]{Remark}
\newtheorem{Definition}[Theorem]{Definition}
\def\XXint#1#2#3{{\setbox0=\hbox{$#1{#2#3}{\int}$ }
		\vcenter{\hbox{$#2#3$ }}\kern-.6\wd0}}
\begin{document}
	{\vspace{0.01in}
		\title
		{Mixed local and nonlocal elliptic equation with singular and critical Choquard nonlinearity}
		\author{ {\bf G.C. Anthal\footnote{	Department of Mathematics, Indian Institute of Technology, Delhi, Hauz Khas, New Delhi-110016, India. e-mail: Gurdevanthal92@gmail.com}\;, J. Giacomoni\footnote{ 	LMAP (UMR E2S UPPA CNRS 5142) Bat. IPRA, Avenue de l'Universit\'{e}, 64013 Pau, France. 
					email: jacques.giacomoni@univ-pau.fr}  \;and  K. Sreenadh\footnote{Department of Mathematics, Indian Institute of Technology, Delhi, Hauz Khas, New Delhi-110016, India.	e-mail: sreenadh@maths.iitd.ac.in}}}
		\date{}
		
		\maketitle
		
		\begin{abstract}
		In this article, we study a class of elliptic problems involving both local and nonlocal operators with different orders and a singular nonlinearity in combination with critical Hartree type nonlinearity (see problem $(P_\la)$ below). Using variational methods together with the critical point theory of nonsmooth analysis, we show the existence, the regularity and the multiplicity of weak solutions with respect to the parameter $\la$.\\
		\noi \textbf{Key words:} Local-nonlocal mixed operators, singular nonlinearity,  Choquard equation, nonsmooth analysis, existence results, Sobolev regularity, comparison principle, asymptotic behaviour. \\
		
		\noi \textit{2020 Mathematics Subject Classification:} 35A01, 35A15, 35B33, 35D30, 35J15, 35J75, 35M10, 49J52.\\
		\end{abstract}	
	\section{Introduction}
	This article investigates the existence and the multiplicity of weak solutions to the following problem:
	\begin{equation*} 
	( P_\la)	\begin{cases}
			 \mc M u= u^{-\gamma}+\la \left(\ds\I\Om \frac{|u|^{2_\mu^\ast}(y)}{|x-y|^\mu}dy\right)|u|^{2_\mu^\ast-2}u,~&u>0~\text{in}~\Om,\\
			 u=0 &\text{in}~\mb R^n\setminus \Om,
		\end{cases}
	\end{equation*}
where $\gamma >0$, $n \geq 3$, $s \in (0,1)$, $0<\mu<n$, $2_\mu^\ast =\frac{2n-\mu}{n-2}$ and $\Om$ is a bounded domain in $\mb R^n$ with smooth boundary. The operator $\mc M$ in $( P_\la)$ is defined by 
\begin{equation}\label{ecl}
	\mc M =-\De +(-\De)^s~\text{for some}~ s \in (0,1),
\end{equation}
 i.e., composed by a local operator,  Laplacian $(-\De)$, and a nonlocal operator, the fractional Laplacian $(-\De)^s$,  given for a fixed parameter $s \in (0,1)$,  by 
\begin{align*}
	(-\De)^su =C(n,s)P.V. \I{\mb R^n} \frac{u(x)-u(y)}{|x-y|^{n+2s}}dy.
\end{align*}
The term $``P.V"$ stands for the Cauchy's principal value and $C(n,s)$ is a normalizing constant, whose explicit expression is given by
\begin{equation*}
	C(n,s)=\left(\I{\mb R^n}\frac{1-\cos(z_1)}{|z|^{n+2s}}dz\right)^{-1}.
\end{equation*}
 The study of mixed type operators of the form $\mc M$ in \eqref{ecl} is motivated by a wide range of applications. Indeed these operators arise naturally in the applied sciences, to study the impact caused by a local and a nonlocal change in a physical phenomenon. In particular, these operators model diffusion patterns with different time scales (roughly speaking, the higher order operator leads the diffusion for small scales times and the lower order operator becomes predominant for large times). They appear for instance in bi-modal power law distribution processes, see \cite{PV1}. Further applications occur in the theory of optimal searching, biomathematics and animal forging, see \cite{DLV, DV} and the references therein. 

\noindent Due to these various important applications, the study of elliptic problems involving mixed type operators having both local and nonlocal features has attracted more and more attention. In particular, the current research work on problems involving this type of operators has investigated several issues about existence and regularity of solutions. In this mater, using probability theory, Foondum \cite{F}, Chen et. al \cite{CKSV}, studied the regularity of solutions to the equation
\begin{equation*}
	\mc M u =0.
\end{equation*}
More recently, using a purely analytic approach, Biagi, Dipierro, Valdonoci and Veechi, in their series of papers \cite{BDVV1,BDVV2,BDVV4}, have carried out a broad investigation of problems involving mixed operators, proving a number of results, concerning regularity and qualitative behaviour of solutions, maximum principles and related variational principles. The question of H\"{o}lder regularity was investigated by De Filippis-Mingione in \cite{FM} for a large class of mixed local and nonlocal  operators. Under some suitable assumptions, the authors proved almost Lipschitz continuity and local gradient H\"{o}lder regularity (see Theorem 3, 6 and 7 respectively there).\\
	There is a large literature available for the problems with Choquard nonlinearity due to its vast application in physical modeling, see for instance the works of Pekar \cite{Pe} and Lieb \cite{Li}. For detailed studies on the existence and regularity of weak solutions for these types of problems we refer in the local setting to \cite{MS4} and the references therein. In the non local case, Choquard type equations have been investigated more recently and arise for instance in the study of mean field limit of weakly interacting molecules, in the quantum mechanical theory and in the dynamics of relativistic Boson-stars (see \cite{daveniaetal} and references therein). In \cite{daveniaetal}, a Schr\"odinger type problem involving a Hartree type nonlinearity and the fractional laplacian is studied. Existence, nonexistence and properties of solutions are proved in this paper.\\
For the Brezis-Niremberg type problems  involving Choquard nonlinearities, we refer to \cite{GY} in the local setting,  to \cite{MS} for fractional diffusion case and to \cite{AGS} for the mixed operator case.\\
Problems involving singular nonlinearities have a very long history. One of the seminal breakthrough in the study of singular elliptic problems was the work of Crandall, Rabinowitz and Tartar \cite{CRT}. In this work, the authors proved the existence of  solutions to a class of  elliptic equations involving a singular nonlinearity by using the classical method of sub and supersolutions and  passing to the limit on approximated problems. Subsequently, a large amount of works have been done discussing further issues about regularity, mutilplicity and qualitative properties of solutions to quasilinear elliptic and parabolic singular problems, see for instance the review articles \cite{GR, HM} and references therein.\\
 Without giving an exhaustive list of contributions, we quote some key references: \cite[Haitao]{H} brought new multiplicity results of solutions to an elliptic singular problem with critical perturbation. Precisely, the author considered the following problem:
 \begin{align*}
 	-\De u =\la u^{-\gamma} + u^p,~u >0~\text{in}~\Om,~u =0~\text{on}~\partial\Om,
  \end{align*}
where $\Om \subset \mb R^n~(n \geq 3)$ is a smooth bounded domain, $\gamma \in (0,1)$, and  $1<p\leq \frac{n+2}{n-2}$. Using monotone iterations and the mountain pass lemma, the author showed existence and multiplicity results for the maximal range of parameter $\la$, i.e. established  global multiplicity. We also refer to \cite{AG, DPST} for higher singular cases, i.e. with $\gamma \in (1,3)$. Finally, the case of any $\gamma >0$ was considered by Hirano, Saccon and Shioji in \cite{HSS}. Here the authors studied the existence of very weak solutions $u\in H^1_{loc}(\Om)$ satisfying $(u-\e)^+ \in H^1_0(\Om)$ for all $\e >0$. The proof used variational methods and nonsmooth analysis arguments.\\
Investigating a class of nonlocal elliptic and singular equations, Barrios et al. \cite{BDMP} considered the following type of problem:
\begin{align*}
	(-\De)^s u =\la \frac{g(x)}{u^\gamma} +K u^r,~u>0~\text{in}~\Om, ~u =0 ~\text{in}~\mb R^n \setminus \Om,
\end{align*}
where $n >2s$, $ K \geq 0$, $0<s<1$, $\gamma >0$, $\la>0$, $1 \leq r <2_s^*-1$ with $2_s^* =\frac{2n}{n-2s}$ and $g \in L^p(\Om)$, with $p \geq 1$ a nonnegative function. In the spirit of \cite{CRT}, the authors considered first the approximated problem where the singular term $\frac{1}{u^\gamma}$ is replaced by $\frac{1}{(u+1/k)^\gamma}$ and showed the existence of a solution $u_k$. Finally, the existence of weak solutions to the initial problem is obtained from uniform estimates of $\{u_k\}_{k\in\mathbb{N}}$. Furthermore the authors proved multiplicity results when $K>0$ and for small $\la>0$. The case of critical exponent problem with singular nonlinearity was handled by \cite{MS1} for $\gamma \in (0,1)$. Later in the spirit of \cite{HSS}, using the nonsmooth analysis, \cite{GMS} established the multiplicity result for the critical exponent problem with singular nonlinearity for any $\gamma >0$. More recently, \cite{GGS1} studied the following nonlocal singular problem with a critical Choquard type nonlinearity:
\begin{align*}
	(-\De)^s u=  u^{-\gamma}+\la \left(\ds\I\Om \frac{|u|^{2_{\mu,s}^\ast}(y)}{|x-y|^\mu}dy\right)|u|^{2_{\mu,s}^\ast-2}u,~u>0~\text{in}~\Om,
	u=0~ \text{in}~\mb R^n\setminus \Om,
\end{align*}
wher $\gamma>0$, $n >2s$, $2_{\mu,s}^* =\frac{2n-\mu}{n-2s}$, and $\Om$ a bounded domain in $\mb R^n $ with smooth boundary.
Again using the critical point theory of nonsmooth analysis and the geometry of the energy functional, the authors established a global multiplicity result. 
\\
In the current literature, singular problems involving both local and nonlocal operators is very less investigated. Recently, \cite[Arora and Radulescu]{AR} studied the following singular problem involving mixed operators:
\begin{align*}
	\mc M u = \frac{g(x)}{u^\gamma},~u >0~\text{in}~\Om,~ u=0~\text{on}~\mb R^n\setminus \Om,
\end{align*} 
where $\Om \subset \mb R^
n$, $n \geq 2$, $\gamma \geq 0$, and $g: \Om \ra  \mb R^+$ belongs to
$L^r(\Om)$ for some $1 \leq r \leq \infty$. The case where $g$ behaves as a power function of distance function $\de$ near the boundary i.e.
$g(x)\sim\de^{-\zeta}(x)$ for some $\zeta \geq 0$ and $x$ lying near the boundary $\partial \Om$ is also considered in \cite{AR}. Here, the authors proved  the existence, Sobolev regularity and boundary behaviour of weak solutions under different assumptions  on $g$ and $\gamma$. We also refer to \cite{G} for the proof of
the existence of multiple solutions in case of perturbed subcritical singular nonlinearities with $\gamma \in (0,1)$. The case of  quasilinear mixed operators  is issued in \cite{GU}, still with the restriction $\gamma \in (0,1)$.\\
Motivated by the above discussion, in the present paper, we consider the singular doubly nonlocal problem $(P_\la)$ with any $\gamma>0$. To the best of our knowledge, there is no previous contribution dealing with  critical Choquard type problems involving singular nonlinearities and mixed diffusion operators. In the spirit of \cite{HSS}, using the theory of nonsmooth analysis together with convexity properties of the associated energy functional, we prove the existence, multiplicity (under some additional restrictions on $s$ and $n$) asymptotic behavior and regularity of weak solutions to $(P_\la)$ for all $\gamma>0$. Precisely, we prove the following main result :
	\begin{Theorem}\label{tmr}
		Let $\mu < \min\{4,n\}$ and $\mc G$ be defined as in Theorem \ref{ART}. Then the following assertions hold:
		\begin{itemize}
		\item[(i)](Existence/nonexistence) 
		There exists $\Lambda>0$ such that $(P_\la)$ admits at least one weak solution for every $\la \in (0,\Lambda]$ and no solution for $\la > \Lambda$. 
		\item[(ii)](Asymptotic behavior and regularity) Any weak solution $u$ to  $(P_\la)$  is bounded, satisfies  $u \in \mc G(\Om)\cap C^{0,\al}_{\text{loc}}(\Om)$, for some $\al \in (0,1)$, and for any $\nu>\max\{1,\frac{\gamma+1}{4}\}$, $u^\nu\in H^1_0(\Om)$.  Furthermore, if $\gamma<3$, then $u\in H^1_0(\Om)\cap C^{1,\beta}_{loc}(\Om)$ for some $0<\beta<1$.
		\item[(iii)](Multiplicity) Assuming $n+2s <6$, there exists at least two distinct weak solutions for any $\lambda\in (0,\Lambda)$. 
		\end{itemize}
	\end{Theorem}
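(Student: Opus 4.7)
The plan is to follow the nonsmooth variational scheme of Hirano--Saccon--Shioji \cite{HSS} as adapted to singular/critical Choquard settings in \cite{GMS,GGS1} and to the mixed-operator framework in \cite{AGS,AR,G}. Concretely, for part (i) I would first define
\[
\Lambda := \sup\{\lambda>0 : (P_\lambda)\text{ has a weak solution}\}
\]
and establish $0<\Lambda<\infty$. To show $\Lambda>0$, I would truncate the singular nonlinearity via the replacement $u^{-\gamma}\leadsto (u+1/k)^{-\gamma}$ and solve the regularised problem by minimisation or mountain pass on $H^1_0(\Omega)$; for $\lambda$ small, an ordered pair of sub/super-solutions built from the pure singular mixed problem solved in \cite{AR} gives a uniform $L^\infty\cap H^1_0$ bound that allows to pass to the limit $k\to\infty$. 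For $\Lambda<\infty$, testing $(P_\lambda)$ against the first Dirichlet eigenfunction of $\mathcal M$ (which is positive by the maximum principle from \cite{BDVV1}) and using the convexity lower bound $t^{-\gamma}+c t^{2^*_\mu-1}\ge c'\,t$ on $(0,\infty)$ forces $\lambda$ to be bounded. Existence for every $\lambda\in(0,\Lambda)$ then follows from a standard monotonicity/comparison argument: if $(P_{\lambda'})$ is solvable for some $\lambda'>\lambda$, a weak solution of $(P_\lambda)$ is obtained by minimising the energy between the pure-singular subsolution and the solution at $\lambda'$. At $\lambda=\Lambda$, pick $\lambda_k\uparrow\Lambda$ and extract a weak/pointwise limit of the corresponding solutions, using uniform energy bounds plus the Hardy--Littlewood--Sobolev inequality to identify the nonlocal term; nonexistence for $\lambda>\Lambda$ is immediate from the definition.

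For part (ii), a Moser-type iteration adapted to $\mathcal M$ (handling the Choquard term by the HLS inequality as in \cite{MS,MS4,GGS1}) yields $u\in L^\infty(\Omega)$. Combined with the interior Hölder regularity of De Filippis--Mingione \cite{FM} this gives $u\in C^{0,\alpha}_{\mathrm{loc}}(\Omega)$. The boundary profile $u\in\mathcal G(\Omega)$ is obtained by a weak comparison with the canonical solution of $\mathcal Mv=v^{-\gamma}$ constructed in Theorem \ref{ART}. To prove $u^\nu\in H^1_0(\Omega)$ for $\nu>\max\{1,(\gamma+1)/4\}$, I would test $(P_\lambda)$ against $(u+\varepsilon)^{2\nu-1}-\varepsilon^{2\nu-1}$ and exploit the pointwise inequality $\bigl((-\Delta)^s u\bigr)\,u^{2\nu-1}\ge c_\nu\bigl|(-\Delta)^{s/2}u^\nu\bigr|^2$ (a Stroock--Varopoulos style inequality, available for $\nu>1/2$), with $\nabla u^\nu=\nu u^{\nu-1}\nabla u$ handling the local part; the condition $\nu>(\gamma+1)/4$ is exactly what makes $\int u^{-\gamma}u^{2\nu-1}$ finite once the $\mathcal G$-boundary behaviour is taken into account. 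When $\gamma<3$ this threshold falls below $1$, so $\nu=1$ is admissible and $u\in H^1_0(\Omega)$; the $C^{1,\beta}_{\mathrm{loc}}$ statement then follows from the gradient Hölder regularity of \cite{FM} once $u^{-\gamma}$ is $L^\infty_{\mathrm{loc}}$ inside $\Omega$.

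For part (iii), fix $\lambda\in(0,\Lambda)$, let $u_\lambda$ be the first solution from (i), and seek a second solution of the form $u_\lambda+v$ with $v\ge 0$, $v\not\equiv 0$. The shifted energy
\[
J_\lambda(v)=\tfrac12\|v\|^2_{\mathcal M}-\int_\Omega G(x,u_\lambda,v)\,dx -\tfrac{\lambda}{2\cdot 2^*_\mu}\!\int_\Omega\!\!\int_\Omega\frac{|u_\lambda+v|^{2^*_\mu}(x)|u_\lambda+v|^{2^*_\mu}(y)}{|x-y|^\mu}\,dx\,dy
\]
is nonsmooth but of the Szulkin type: the singular term, evaluated at $u_\lambda+v\ge u_\lambda>0$, is convex and lower semicontinuous in $v$. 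Following \cite{HSS,GMS,GGS1}, I would verify that $v=0$ is a strict local minimum of $J_\lambda$ (using the strict ordering and the local behaviour of the singular term), check that along a suitable Moroz--Van Schaftingen bubble $w_\varepsilon$ the functional $J_\lambda(tw_\varepsilon)\to-\infty$ as $t\to\infty$, and establish a Palais--Smale condition below the Choquard critical threshold $c^*=\tfrac{n+2-\mu}{2(2n-\mu)}\,S_{HL}^{\frac{2n-\mu}{n+2-\mu}}$. A nonsmooth mountain pass then delivers a critical point at a level $c_{MP}\in(0,c^*)$, distinct from $u_\lambda$. The energy asymptotics for the bubble must exceed the cross-interaction with $u_\lambda$ and the lower order terms: the local $-\Delta$ part produces the classical Brezis--Nirenberg correction while the fractional part contributes an error of order $\varepsilon^{n-2s}$; the restriction $n+2s<6$ (equivalently $n-2s>n-(6-n)/1$ in the relevant estimates) is exactly what forces this fractional error to be of lower order than the dominant gain from the Choquard term, ensuring $c_{MP}<c^*$.

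The main obstacle will be this last energy estimate: showing $c_{MP}<c^*$ requires a very careful expansion of the Choquard energy on truncated/rescaled Moroz--Van Schaftingen minimisers, incorporating simultaneously the local Laplacian, the fractional Laplacian, the singular term at $u_\lambda$ and the Hartree interaction with $u_\lambda$. Keeping track of the competing scales (bubble width $\varepsilon$, regularity of $u_\lambda$, and the parameters $n,s,\mu,\gamma$) and extracting the precise leading-order coefficient — which is what dictates the hypothesis $n+2s<6$ — is the technical heart of the proof; everything else (regularity, existence curve, Palais--Smale below $c^*$) is a relatively standard adaptation of tools already available in \cite{BDVV1,BDVV2,BDVV4,FM,AR,GGS1,AGS}.
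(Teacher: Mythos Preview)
Your overall architecture is close to the paper's: nonsmooth Szulkin-type variational methods, sub/super-solutions for the first solution, Moser iteration plus \cite{FM} for regularity, and a mountain pass below the Choquard threshold for the second. But there are a few concrete points where your plan diverges or slips.

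\textbf{Translation by $\hat u$ versus regularisation.} The paper does \emph{not} regularise $u^{-\gamma}$. Instead it translates $(P_\lambda)$ by the unique solution $\hat u$ of the pure singular problem $(P_0)$, obtaining a translated problem $(\hat P_\lambda)$ whose solutions genuinely live in $X_0$. This is essential when $\gamma\ge 3$, since then weak solutions of $(P_\lambda)$ are only in $H^1_{\mathrm{loc}}$; your regularised solutions would lie in $H^1_0$ but their limit does not, and your sub/super-solution scheme on $H^1_0(\Omega)$ would need a separate justification. The paper's comparison principle (their Lemma~\ref{lcp}) and the identification ``$v$ solves $(P_\lambda)$ $\Leftrightarrow$ $v-\hat u$ solves $(\hat P_\lambda)$'' are what makes the whole machinery run uniformly in $\gamma>0$.

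\textbf{The bound $\Lambda<\infty$.} Your eigenfunction test relies on a pointwise inequality $t^{-\gamma}+c\,t^{2^*_\mu-1}\ge c' t$, but the Choquard term is \emph{nonlocal}: the coefficient in front of $u^{2^*_\mu-1}(x)$ is $\int_\Omega |u(y)|^{2^*_\mu}|x-y|^{-\mu}\,dy$, which is not a function of $u(x)$ alone and has no obvious positive lower bound. The paper instead argues by contradiction via energy estimates on the translated problem, using that the first solution is a local minimiser with $\Phi<0$ together with the identity $\Phi'(u_\lambda)[u_\lambda]=0$; this yields a uniform control on the Hartree energy that forces a contradiction as $\lambda\to\infty$.

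\textbf{Critical level and fractional error.} Two quantitative slips: the compactness threshold carries a $\lambda$-dependence, namely
\[
\Phi_{K_v}(v)+\tfrac12\Big(\tfrac{n-\mu+2}{2n-\mu}\Big)\,\frac{S_{H,L}^{(2n-\mu)/(n-\mu+2)}}{\lambda^{(n-2)/(n-\mu+2)}},
\]
not just $c^*$; and the Gagliardo energy of the truncated Talenti bubble satisfies $[u_\varepsilon]_s^2=O(\varepsilon^{\min\{n-2,\,2-2s\}})$ (see \cite{BDVV}), not $O(\varepsilon^{n-2s})$. It is the comparison of $\min\{n-2,2-2s\}$ with the gain $\varepsilon^{(n-2)/2}$ coming from the cross Hartree term that produces the restriction $n+2s<6$; with your exponent $n-2s$ you would derive a different (and incorrect) dimensional condition.
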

\begin{Remark} \label{remark1.2}
With additional restrictions, we can further show the following regularity results (proved at the end of Section 3):
\begin{enumerate}[label=(\arabic*)]
	\item Suppose $\gamma \in (0,\frac{1}{n})$ and $s \in (0,\frac12)$. Then infact $u \in C^{2,\al-2s}_{\text{loc}}(\Om)$ with any $\alpha<1$.
	\item If $\gamma \in (0,1)$ and $s \in (0,\frac{1}{2})$, then we have $u \in W^{2,p}(\Om)$ with $p \in (1,\frac{1}{\gamma})$.
\end{enumerate}
	\end{Remark}
 An important step to prove Theorem \ref{tmr} is to prescribe  the asymptotic behaviour of weak solutions to  $(P_\la)$ near the boundary $\partial\Omega$. To this goal, we prove a crucial comparison principle (see Lemma \ref{lcp}) in the spirit of \cite{GGS1}. In addition, we show that weak solutions  are bounded by using Lemma \ref{lcp} and bootstrap techniques applied on the equivalent problem $(\hat{P}_\la)$ obtained by translating  $(P_\la)$ with the solution to the pure singular problem  $(P_0)$, defined in section 2. Next we establish the existence of  energy solutions to $(\hat{P}_\la)$ trapped in a conical shell  defined by sub and supersolutions with suitable asymptotic behaviour near boundary. From this,  regularity and asymptotic behaviour of weak solutions follow.  Finally, using variational arguments, we prove the global multiplicity result of problem thanks to convex properties of the singular part of the nonlinearity and accurate estimates  about energy levels associated to $(\hat{P}_\la)$. We want to highlight that our approach can be apply also in other situations, in particular when considering local critical perturbation instead of critical Choquard nonlinearity. The corresponding  results are new for large $\gamma$  according to former contributions.\\ 
\textbf{Organization of the paper}: In Section \ref{PMR} we define the function spaces, give some preliminaries of nonsmooth analysis and state technical results used in the subsequent sections. In Section \ref{s3}, we prove the boundedness of weak solutions to $(P_\la)$ by bootstrap type arguments together with the comparison principle proved in Lemma \ref{lcp}. Finally, in Section \ref{s4} we show the existence of weak solutions (by sub and supersolutions technique) and derive their regularity and asymptotic behaviour. Identifying the first critical energy level under which the Palais Smale condition holds, we prove the multiplicity of weak solutions to $(P_\la)$ and complete the proof of Theorem \ref{tmr}.\\
\textbf{Notations}: Throughout the paper, we set
\begin{itemize}
	\item $\de(x):=\text{dist} (x,\partial \Om)$ and $d_\Om =\text{diam}(\Om)$;
	\item for any number $p \in (1,\infty)$, we denote by $p^\prime =\frac{p}{p-1}$ as the conjugate exponent of $p$ and $|\cdot|_p$ denotes the norm in $L^p(\mb R^n)$ space;
	\item for any two functions $g,~h$, we write $g \prec h$ or $g \succ h$ if there exists a constant $C>0$ such that $g \leq Ch$ or $g \geq Ch$. We write $g \sim h$ if $g \prec h$ and $ g \succ h$; 
	\item $u^{p} =|u|^{p-1}u$ and $\|u\|^{22_\mu^\ast}_{HL}=\ds\I{\mb R^n}\I{\mb R^n}\frac{|u|^{2_\mu^\ast}(x)|u|^{2_\mu^\ast}(y)}{|x-y|^\mu}dxdy
	$.
\end{itemize}  
	\section{Preliminaries and auxilary results}\label{PMR}
		In this section we give the functional settings and collect the notations and preliminary results required in the rest of the paper.\\
	Let $s \in (0,1)$. For a measurable function $u:\mb R^n \ra \mb R$, we define
	\begin{equation*}
		[u]_s =\left(\frac{C(n,s)}{2} \I{\mb R^n}\I{\mb R^n}\frac{|u(x)-u(y)|^2}{|x-y|^{n +2s}}dxdy\right)^\frac12,
	\end{equation*}
	the so-called Gagliardo seminorm of $u$ of order $s$.	\\
 We define the space $X_0$ as the completion of $C_c^\infty (\Om)$ with respect to the global norm
	\begin{align*}
		\|u\|:= \left(  \ll u\gg^2+[u]_s^2\right)^\frac12, ~ u \in C_c^\infty(\Om),
	\end{align*}
	where we define
	 	$ \ds \ll u\gg =\left(\ds\I{\mb R^n}|\na u|^2\right)^\frac12.$
  The norm $\|\cdot\|$ is induced by the scalar product
	\begin{equation*}
		\ld u,v\rd:= \I{\mb R^n} \na u\na v dx + \frac{C(n,s)}{2}\I{\mb R^n}\I{\mb R^n}\frac{(u(x)-u(y))(v(x)-v(y))}{|x-y|^{n+2s}}dxdy,~u,v \in X_0.
	\end{equation*}
	 Clearly, $X_0$ is a Hilbert space.
	\begin{Remark}
		 Note that in the definition of $\|\cdot\|$ the $L^2$-norm of $\na u$ is considered on the whole of $\mb R^n$ in spite of $u \in C_c^\infty(\Om)$ (identically vanishes outside $\Om$). This is to point out that the elements in $X_0$ are functions defined on the entire space and not only on $\Om$. The benefit of having this global functional setting is that these functions can be globally approximated on $\mb R^n$ with respect to the norm $\| \cdot\|$ by smooth functions with support in $\Om$.\\
		We see that this global definition of $\|\cdot\|$ implies that the functions in $X_0$ naturally satisfy the nonlocal Dirichlet type condition prescribed in problem $( P_\la)$, that is,
		\begin{equation}\label{e1.2}
			u \equiv 0 ~\text{a.e. in}~\mb R^n\setminus\Om~\text{for every}~u \in X_0.
		\end{equation} 
	In order to verify \eqref{e1.2}, we know (see \cite[Proposition 2.2]{DPV}) that $H^1(\mb R^n)$ is continuously embedded into $H^s(\mb R^n)$ (with $s\in(0,1)$) $i.e.$ there exists a constant $k =k(s)>0$ such that, for every $u \in C_c^\infty(\Om)$ one has
	\begin{equation*}
		[u]_s^2 \leq k(s)\|u\|_{H^1(\mb R^n)}^2=k(s)(\|u\|_{L^2(\mb R^n)}^2+\ll u\gg^2).
	\end{equation*}
	This, together with the classical Poincar\'{e} inequality, implies that $\|\cdot\|$ and the full $H^1-norm$ in $\mb R^n$ are actually equivalent in the space $C^\infty_c(\Om)$, and hence
	\begin{align*}
		X_0=\overline{C_c^\infty(\Om)}^{\|\cdot\|_{H^1(\mb R^{n})}}=\{u \in H^1(\mb R^n): u\arrowvert_\Om\in H_0^1(\Om)~\text{and}~u\equiv 0 ~\text{a.e. in}~\mb R^n\setminus \Om\}.
	\end{align*}
	\end{Remark}
Now we recall the Hardy-Littlewood-Sobolev inequality which is the first brick in study of the Choquard type problems.
\begin{Proposition}\label{hls}
	\textbf{Hardy-Littlewood-Sobolev inequality} Let $r, q>1$ and $0<\mu<n$ with $1/r+1/q+\mu/n=2$, $g\in L^{r}(\mb R^n), h\in L^q(\mb R^n)$. Then, there exists a sharp constant $C(r,q,n,\mu)$ independent of $g$ and $h$ such  that 
	\begin{equation*}
		\int\limits_{\mb R^n}\int\limits_{\mb R^n}\frac{g(x)h(y)}{|x-y|^{\mu}}dxdy \leq C(r,q,n,\mu) |g|_r|h|_q.
	\end{equation*}                             
\end{Proposition}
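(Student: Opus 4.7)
The plan is to pass from the bilinear inequality to a mapping property of the Riesz potential and then recover the double-integral bound by duality. Writing $I_\mu(x) := |x|^{-\mu}$, one has
\[
\int_{\mb R^n}\int_{\mb R^n} g(x) h(y) |x-y|^{-\mu}\,dx\,dy = \int_{\mb R^n} h(y)\,(I_\mu * g)(y)\,dy,
\]
so by H\"older's inequality it suffices to show that convolution with $I_\mu$ defines a bounded operator $L^r(\mb R^n)\to L^{q'}(\mb R^n)$, with $q'$ the H\"older conjugate of $q$. The scaling constraint $1/r + 1/q + \mu/n = 2$ is precisely what forces $1/q' = 1/r + \mu/n - 1 \in (0,1)$, the only exponent compatible with dilation invariance of the kernel.

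The core analytic step is a weak-type $(r,q')$ estimate for $Tg := I_\mu * g$. Since $I_\mu$ lies in weak $L^{n/\mu}$ but not in $L^{n/\mu}$ itself, a direct application of Young's inequality fails. Instead I would split $I_\mu = K_R^< + K_R^>$ with $K_R^< := I_\mu \chi_{\{|x|<R\}}$, and control the two convolutions separately: H\"older applied to the localized piece gives an $L^\infty$ bound of the form $R^{n/r'-\mu}|g|_r$, whereas for the tail, which lies in $L^{r'}$ thanks to better integrability at infinity, one obtains an $L^r$ bound of the form $R^{n/r-\mu}|g|_r$. For each level $t>0$ one then chooses $R=R(t)$ to balance the two contributions and applies Chebyshev's inequality to deduce $|\{|Tg|>t\}| \leq C\,(t^{-1}|g|_r)^{q'}$, the desired distributional estimate.

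To upgrade the weak-type bound to the strong one, I would apply the Marcinkiewicz real interpolation theorem. The weak-type estimate above holds for every pair $(r_i,q_i')$ with $1/q_i' = 1/r_i + \mu/n - 1$ in an open neighborhood of the target $(r,q')$; interpolating between any two such endpoints straddling $r$ yields the strong bound $|Tg|_{q'} \leq C(r,q,n,\mu)\,|g|_r$, which combined with the H\"older step above proves the stated inequality with a finite constant.

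Finally, to obtain the sharp constant one must invoke the considerably more delicate argument of Lieb: using the Riesz rearrangement inequality, reduce to nonnegative radially decreasing functions; then exploit the conformal invariance of the functional under the M\"obius group (via stereographic projection onto $S^n$) to show that an extremizing sequence subconverges, modulo conformal transformations, to a function of the form $c\,(\sigma^2 + |x-x_0|^2)^{-(2n-\mu)/2}$, on which the ratio can be explicitly computed. This last step is the main obstacle: producing some finite constant is a routine real-variable argument via Marcinkiewicz interpolation, whereas identifying the optimal constant genuinely requires the rearrangement and conformal symmetry framework.
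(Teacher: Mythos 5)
The paper itself offers no proof of this proposition: it is the classical Hardy--Littlewood--Sobolev inequality, recalled as a known result (its sharp form is due to Lieb), so there is no internal argument to compare against; your sketch follows the standard textbook route (duality, a weak-type bound for the Riesz potential via splitting the kernel, Marcinkiewicz interpolation, and rearrangement/conformal analysis for sharpness), which is the right strategy. However, the core analytic step is stated backwards and, as written, would fail. In the relevant regime one has $\mu r' > n$ (this is equivalent to $1/q'>0$, i.e.\ to $q>1$), so the localized piece $K_R^{<}=|x|^{-\mu}\chi_{\{|x|<R\}}$ does \emph{not} belong to $L^{r'}(\mb R^n)$ --- the integral of $|x|^{-\mu r'}$ diverges at the origin --- and H\"older cannot give the $L^\infty$ bound you assign to it; symmetrically, the tail $K_R^{>}$ is not in $L^1(\mb R^n)$ (since $\mu<n$), so it cannot produce an $L^r$ bound via Young. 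The correct assignment is the opposite one: $K_R^{<}\in L^1$ with $|K_R^{<}|_1\sim R^{\,n-\mu}$, which gives $|K_R^{<}\ast g|_r\leq C R^{\,n-\mu}|g|_r$ (note the exponent is $n-\mu$, not $n/r-\mu$), while $K_R^{>}\in L^{r'}$ with $|K_R^{>}|_{r'}\sim R^{\,n/r'-\mu}$, which by H\"older gives $|K_R^{>}\ast g|_\infty\leq C R^{\,n/r'-\mu}|g|_r$, where $n/r'-\mu<0$. Choosing $R$ so that the $L^\infty$ contribution equals $t/2$ and applying Chebyshev to the $L^r$ piece then yields exactly the weak-type $(r,q')$ estimate you state, and the Marcinkiewicz step goes through as described because the admissible pairs lie on an affine line in the $(1/r,1/q')$ plane.

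A further caveat concerns the last paragraph: the rearrangement/conformal-invariance argument identifies the extremizers $c\,(\sigma^2+|x-x_0|^2)^{-(2n-\mu)/2}$ and an explicitly computable optimal constant only in the symmetric case $r=q=\tfrac{2n}{2n-\mu}$ (or when one of the exponents equals $2$); this diagonal case is in fact the only one the paper uses (cf.\ Lemma 2.3). For general admissible $(r,q)$, Lieb's theorem gives existence of extremizers but no closed-form constant, so your claim that the ratio ``can be explicitly computed'' should be restricted to the conformally invariant case. Since the proposition only asserts the existence of a finite (hence best) constant, this does not affect the statement, but the sharpness discussion as written overreaches.
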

In particular, let $g = h = |u|^p$ 
then by Hardy-Littlewood-Sobolev inequality we see that,
$$                               \int\limits_{\mb R^n}\int\limits_{\mb R^n}\frac{|u(x)|^pu(y)|^p}{|x-y|^{\mu}}dxdy$$
is well defined if $|u|^p \in L^\nu(\mb R^n)$ with $\nu =\frac{2n}{2n-\mu}>1$.
Thus, from Sobolev embedding theorems, we must have
\begin{equation*}
\frac{2n-\mu}{n} \leq p \leq \frac{2n-\mu}{n-2}.
\end{equation*}
From this, for $u \in H^1(\mb R^n) $ we have
$$    \left(\int\limits_{\mb R^n}\int\limits_{\mb R^n}\frac{|u(x)|^{2_\mu^\ast}|u(y)|^{2_\mu^\ast}}{|x-y|^\mu}dxdy \right)^\frac{1}{2_\mu^\ast} \leq C(n,\mu)^\frac{1}{2_\mu^\ast}  |u|_{2^*}^2  .                                     $$
	We denote by $S_{H,L}$ the best constant associated to Hardy-Littlewood-Sobolev inequality, i.e,
\begin{align*}
		S_{H,L}=\inf\limits_{u \in C_0^\infty(\mb R^n)\setminus \{0\}} \frac{\|\nabla u\|_{L^2(\mb R^n)}^2}{\|u\|_{HL}^2}.
\end{align*}
 The best constant $S_M$ of the mixed Sobolev embedding is defined by
\begin{equation*}
	S_M = \inf_{u \in X_0\setminus\{0\}}\frac{\|u\|^2}{|u|^2_{2^*}}.
\end{equation*}
We also define '
\begin{equation*}
	S_{H,L,M} = \inf_{u \in X_0\setminus\{0\}}\frac{\|u\|^2}{\|u\|^2_{HL}}.
\end{equation*}
 From \cite[Theorem 1.1]{BDVV} and \cite[Theorem 1.2]{AGS} one has that $S_M =S$ and $S_{H,L,M}=S_{H,L}$, where $S$ is the best constant in the classical Sobolev embedding. Next, the following lemma plays a crucial role in the sequel:
\begin{Lemma}\cite{GY}\label{l2.2}
	The constant $S_{H,L}$ is achieved if and only if 
	\begin{equation*}
		u=C\left(\frac{b}{b^2+|x-a|^2}\right)^\frac{n-2}{2},
	\end{equation*}
where $C>0$ is a fixed constant, $a \in \mb R^n$ and $b \in (0,\infty)$ are parameters. Moreover,
\begin{equation*}
	S= C(n,\mu)^\frac{n-2}{2n-\mu}S_{H,L}.
\end{equation*}
\end{Lemma}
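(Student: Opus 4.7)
The plan is to deduce this lemma by chaining the sharp Hardy--Littlewood--Sobolev inequality (Proposition \ref{hls}) with the classical Sobolev inequality, and then exploiting the fact that both sharp inequalities are simultaneously attained precisely on the Aubin--Talenti family.

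First, I would apply Proposition \ref{hls} with $g=h=|u|^{2_\mu^\ast}$ and the conjugate parameters $r=q=\tfrac{2n}{2n-\mu}$, which are consistent with the scaling constraint $1/r+1/q+\mu/n=2$. Since $2_\mu^\ast\cdot r=\tfrac{2n-\mu}{n-2}\cdot\tfrac{2n}{2n-\mu}=2^\ast$, this produces
\begin{equation*}
\|u\|_{HL}^{2\cdot 2_\mu^\ast}\;\le\; C(n,\mu)\,|u|_{2^\ast}^{2\cdot 2_\mu^\ast},
\qquad\text{i.e.}\qquad \|u\|_{HL}^{2}\;\le\; C(n,\mu)^{1/2_\mu^\ast}|u|_{2^\ast}^{2}.
\end{equation*}
Dividing $\|\nabla u\|_{L^2}^2$ by both sides and taking the infimum yields $S_{H,L}\ge S\,C(n,\mu)^{-1/2_\mu^\ast}$, and since $1/2_\mu^\ast=(n-2)/(2n-\mu)$ this is exactly the inequality $S\le C(n,\mu)^{(n-2)/(2n-\mu)}S_{H,L}$.

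Next, I would argue the reverse inequality and the characterization of extremizers simultaneously. The Aubin--Talenti bubbles
\begin{equation*}
U_{a,b}(x)=C\!\left(\frac{b}{b^2+|x-a|^2}\right)^{\!\!\frac{n-2}{2}}
\end{equation*}
are the (unique up to translation and scaling) extremizers of the Sobolev inequality $|u|_{2^\ast}^2\le S^{-1}\|\nabla u\|_{L^2}^{2}$. By Lieb's sharp version of the Hardy--Littlewood--Sobolev inequality, the equality case with $g=h$ is also achieved exactly on the same family (after the identification of $|u|^{2_\mu^\ast}$ with a bubble of the appropriate exponent). Plugging $U_{a,b}$ into the composite quotient $\|\nabla u\|_{L^2}^2/\|u\|_{HL}^2$ therefore saturates both inequalities at once, giving $S_{H,L}=S\cdot C(n,\mu)^{-(n-2)/(2n-\mu)}$ and identifying the minimizers as the $U_{a,b}$.

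The main obstacle is the extremizer characterization of the sharp Hardy--Littlewood--Sobolev inequality for $g=h$; this is the deep Lieb classification, which one should quote rather than reprove. A secondary technical point is to verify that one may restrict the infimum in the definition of $S_{H,L}$ to radially symmetric decreasing functions (via symmetric decreasing rearrangement, under which $\|\nabla u\|_{L^2}$ does not increase while $\|u\|_{HL}$ does not decrease by the Riesz rearrangement inequality), so that a minimizing sequence may be assumed radial and convergent up to scalings; concentration-compactness then produces the claimed minimizer. Once these ingredients are in place, the explicit formula for $U_{a,b}$ and the stated relation between $S$ and $S_{H,L}$ follow immediately.
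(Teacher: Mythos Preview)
The paper does not give its own proof of this lemma: it is stated with the citation \cite{GY} and no argument is supplied. Your outline is correct and is exactly the standard route (the one in \cite{GY}): combine the sharp Hardy--Littlewood--Sobolev inequality in the diagonal case $g=h=|u|^{2_\mu^\ast}$ with the Sobolev inequality, observe that both are saturated precisely on the Aubin--Talenti profiles, and read off the relation $S=C(n,\mu)^{(n-2)/(2n-\mu)}S_{H,L}$ together with the characterization of minimizers. The only substantive input you are quoting rather than proving is Lieb's classification of HLS extremals, which is appropriate here.
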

Now we give the notion of a weak solution to problem $( P_\la)$.
\begin{Definition}\label{d2.3}
	We say that a function $u$ is a weak solution of $( P_\la)$ if the following assertions hold:
	\begin{enumerate}[label=(\roman*)]
		\item $u^\ell \in X_0$ for some $\ell \geq1$.
		\item $\inf_{x\in K}u(x)\geq m_K$ , with $m_K >0$, for every compact subset $K \subset \Om$.
		\item For any $\psi \in C^\infty(\mb R^n)$ with compact support in $\Om$,
		\begin{equation}\label{ewsd}
			\ld u,\psi\rd =\I\Om u^{-\gamma}\psi dx +\la \I\Om\I\Om\frac{u^{2_\mu^\ast}(y)u^{2_\mu^\ast-1}(x)\psi(x)}{|x-y|^\mu}dxdy.
		\end{equation}
	\end{enumerate} 
\end{Definition}
\begin{Remark}Some remarks are in order.
\begin{enumerate}
	\item  From (i)-(ii) of definition \ref{d2.3}, we can easily check that a weak solution $u$  satisfies $u\in H^1_{\text{loc}}(\Om)$ and $(u-\e)^+ \in X_0$ for every $\e>0$. 
	\item We also pointout that \eqref{ewsd} is well defined, i.e. all the integrals in \eqref{ewsd} are finite. Indeed, if $ \varphi \in C_0^\infty(\Om)$ with $K =\text{supp} \varphi$, we have
	\begin{align*}
		\left|\I{\mb R^n}  \na u \na \varphi dx\right| 
		\leq& | \na u|_{L^2(K)} | \na \varphi|_{L^2(K)} <\infty.
	\end{align*}
Let us set
\begin{align*}
	\mc S_\varphi:= \text{supp}\varphi,~\mc Q_\varphi =\mc R^{2n}\setminus (\mc S_\varphi^c \times\mc S_\varphi^c).
\end{align*}
Now by \cite[Lemma 3.5]{CMSS}, with $q =\ell$, we have that
\begin{align*}
	|u^\ell(x)-u^\ell(y)|^2 \geq m_K^{2(\ell-1)}|u(x)-u(y)|^2~\text{in}~\mc Q_\varphi,
\end{align*}
since either $u(x) \geq m_K$ in $\mc Q_\varphi$ or $u(y) \geq m_K$ in $\mc Q_\varphi$. From this we easily infer that
\begin{align*}
&\left |\I{\mb R^n}\I{\mb R^n} \frac{(u(x)-u(y))(\varphi(x)-\varphi(y))}{|x-y|^{n+2s}}dxdy \right |\\=& \left|\I{\mc Q_\varphi}\frac{(u(x)-u(y))(\varphi(x)-\varphi(y))}{|x-y|^{n+2s}}dxdy\right|\\
	\leq & \left(\I{\mc Q_\varphi}\frac{(u(x)-u(y))^2}{|x-y|^{n+2s}}dxdy\right)^\frac12\left(\I{\mc Q_\varphi}\frac{(\varphi(x)-\varphi(y))^2}{|x-y|^{n+2s}}dxdy\right)^\frac12\\
	\leq & Mm_K^{2(1-\ell)} \left(\I{\mb R^n}\I{\mb R^n}\frac{(u^\ell(x)-u^\ell(y))^2}{|x-y|^{n+2s}}dxdy\right)^\frac12 <\infty.
\end{align*}
	Concerning the right hand side of \eqref{ewsd}, since $ u \in L^{2^*}(\Om)$ using classical Hardy-Littlewood-Sobolev and H\"{o}lder's inequality one has
	\begin{align*}
	\left|	\I{\Om}\I{\Om}\frac{|u(x)|^{2_\mu^*}|u(y)|^{2_\mu^*-2}u(y)\varphi(y)}{|x-y|^\mu}dxdy\right|
		\leq C(n,\mu)|u|_{2^*}^2|\varphi|_{2^*}<\infty. 
	\end{align*}
Also since $\varphi \in C_c^\infty(\Om)$,
\begin{align*}
	\I\Om u^{-\gamma}\varphi dx \leq \frac{1}{m_K^\gamma}\I\Om \varphi dx <\infty.
\end{align*}
\end{enumerate}
\end{Remark}
\begin{Lemma}\label{l2.5}
	Let $u$ be a weak solution to $( P_\la)$. Then for all compactly supported $0\leq \psi \in X_0\cap L^\infty(\Om)$, we have  
	\begin{equation*}
		\ld u,\psi\rd=\I\Om u^{-\gamma}\psi dx+\la \I\Om\I\Om\frac{u^{2_\mu^\ast}(y)u^{2_\mu^\ast-1}(x)\psi(x)}{|x-y|^\mu}dxdy.
	\end{equation*}
\end{Lemma}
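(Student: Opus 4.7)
The plan is to approximate $\psi$ by elements of $C_c^\infty(\Om)$ and pass to the limit in \eqref{ewsd}. Let $\psi\in X_0\cap L^\infty(\Om)$ be nonnegative with $K:=\text{supp}\,\psi\subset\subset \Om$. Fix $\e_0>0$ small enough that $K_{\e_0}:=K+\ov{B_{\e_0}(0)}\subset\subset\Om$, and let $(\eta_\e)_{\e>0}$ be a standard nonnegative mollifier. For $\e\in(0,\e_0)$, set $\psi_\e:=\eta_\e*\psi$. Then $\psi_\e\in C_c^\infty(\Om)$, $\text{supp}\,\psi_\e\subset K_{\e_0}$, $0\leq\psi_\e\leq|\psi|_\infty$, and standard mollification theory gives $\psi_\e\to\psi$ in $H^1(\mb R^n)$ (hence in $X_0$, since the $H^1(\mb R^n)$-norm dominates $\|\cdot\|$ via the Gagliardo-Sobolev bound recalled in Section \ref{PMR}) and a.e.\ on $\mb R^n$. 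Since $\psi_\e$ is an admissible test function in Definition \ref{d2.3}, identity \eqref{ewsd} holds with $\psi_\e$ in place of $\psi$, and the proof reduces to passing to the limit $\e\to 0^+$ in each of the four terms.

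The local gradient term becomes an integral over $K_{\e_0}$ and converges by Cauchy--Schwarz, using $\na u\in L^2(K_{\e_0})$ (weak solutions lie in $H^1_{\text{loc}}(\Om)$) together with $\na\psi_\e\to\na\psi$ in $L^2(\mb R^n)$. For the nonlocal part, the Cauchy--Schwarz argument in the Remark above, applied to $\psi_\e-\psi$ (supported in $K_{\e_0}$), yields
$$
\left|\I{\mb R^n}\I{\mb R^n}\frac{(u(x)-u(y))\bigl((\psi_\e-\psi)(x)-(\psi_\e-\psi)(y)\bigr)}{|x-y|^{n+2s}}\,dx\,dy\right|\leq C\,m_{K_{\e_0}}^{1-\ell}\,[u^\ell]_s\,[\psi_\e-\psi]_s,
$$
which vanishes since $[\psi_\e-\psi]_s\to 0$. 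The singular term converges by dominated convergence, because $u^{-\gamma}\leq m_{K_{\e_0}}^{-\gamma}$ on $K_{\e_0}$ and $\psi_\e\to\psi$ a.e.\ with a uniform $L^\infty$-bound. Finally, the Choquard term is controlled by Proposition \ref{hls} combined with H\"older's inequality by
$$
C(n,\mu)\,|u|_{2^*}^{2\cdot 2_\mu^\ast-1}\,|\psi_\e-\psi|_{2^*},
$$
which tends to $0$ via the embedding $X_0\hookrightarrow L^{2^*}(\Om)$.

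Combining the four limits yields \eqref{ewsd} with $\psi$ as test function. The main technical point is the uniform control of the nonlocal bilinear form along the approximating sequence: by fixing the enlarged compact set $K_{\e_0}$ once and for all, the quantity $m_{K_{\e_0}}$ provides a uniform positive lower bound for $u$ across the supports of all $\psi_\e$, so that the CMSS-type pointwise estimate recalled in the Remark can be invoked uniformly in $\e$; the remaining convergences are standard.
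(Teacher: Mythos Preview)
Your proof is correct and follows essentially the same density/approximation approach that the paper invokes (via the reference to \cite[Lemma 2.9]{GGS1}): mollify the compactly supported bounded test function to reduce to $C_c^\infty(\Om)$, then pass to the limit in each term of \eqref{ewsd} using the uniform positive lower bound $m_{K_{\e_0}}$ for $u$ on the enlarged support together with the CMSS-type estimate from the Remark for the nonlocal bilinear form. No further comment is needed.
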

\begin{proof}
	Proof follows on the similar lines of proof of \cite[Lemma 2.9]{GGS1}. \QED
	\end{proof}
In order to prove the existence results for $( P_\la)$, we translate the problem by the unique solution to the purely singular problem:
\begin{equation*}
( P_0)\begin{cases}\mc M u =u^{-\gamma},~u>0~\text{in}~\Om,~u=0~\text{in}~\mb R^n\setminus \Om.
	\end{cases}
\end{equation*}
Using Theorems 2.6, 2.7, and 2.8 of \cite{AR}, we have the following:
\begin{Theorem}\label{ART}  We have the following
	\begin{enumerate}[label=(\roman*)]
		\item 	There exists a positive minimal solution $ \hat{u} \in H^1_{loc}(\Om) \cap L^\infty(\Om)$ of $(P_0)$ such that
		for every $ \psi \in H^1(\mb   R^n)$ with compact support in $\Om$ we have
		\begin{align*}
			\ld \hat{u},\psi\rd =\I\Om \hat{u}^{-\gamma} \psi dx.
		\end{align*}	
			\item $\inf_{x\in K}\hat{u}(x)>0$ for every compact subset $K \subset \Om$.
\item For any $\gamma >0$, we have the following asymptotic behavior:
\begin{align*}
	\hat{u} \in \mc G (\Om)~\text{where}~\mc G(\Om)=\begin{cases} 
		u : u \sim \de~&\text{if}~\gamma<1,\\
		u: u\sim \de \ln^\frac12\left(\frac{d_\Om}{\de}\right)~&\text{if}~\gamma=1,\\
		u: u\sim \de^\frac{2}{\gamma+1}~&\text{if}~\gamma >1,
		\end{cases}
\end{align*}
and the Sobolev regularity
\begin{align*}
	\hat{u}^\nu \in H^1_0(\Om)~\text{with}~\nu \begin{cases}
	=1~&\text{if}~\gamma <3,\\
>	\frac{\gamma+1}{4}~&\text{if}~\gamma \geq 3.
\end{cases}
\end{align*}
\end{enumerate}
\end{Theorem}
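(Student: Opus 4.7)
The proof follows the by-now classical strategy for singular elliptic problems, adapted to the mixed operator $\mc M$. The plan is to solve the truncated nonsingular problems $\mc M u_k = (u_k+1/k)^{-\gamma}$ in $\Om$, $u_k = 0$ in $\R^n\setminus\Om$, for $k \in \N$. Since each right-hand side is bounded and nondecreasing in $u_k$, a standard variational or monotone iteration argument yields a unique positive $u_k \in X_0 \cap L^\infty(\Om)$ (the $L^\infty$ bound coming from e.g.\ the Moser iteration for $\mc M$ in \cite{BDVV4}). The weak comparison principle for $\mc M$ (cf.\ \cite{BDVV1}) gives $u_k \leq u_{k+1}$ and, applied to $u_1$, a strong minimum principle ensures $u_1 \geq m_K>0$ on every compact $K \Subset \Om$, which will pass to the limit as the required lower bound in (ii).

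To obtain uniform estimates, I would test the equation for $u_k$ against an appropriate power of $u_k$ itself. When $\gamma<3$, testing with $u_k$ and combining Sobolev and H\"older yields a uniform bound for $\|u_k\|$, so one can take $\nu = 1$ in (iii). When $\gamma \geq 3$, for any $\nu>\frac{\gamma+1}{4}$ one tests with $u_k^{2\nu-1}$; the local part of the left-hand side directly produces $\ll u_k^\nu\gg^2$ via the chain rule, and the nonlocal part controls $[u_k^\nu]_s^2$ by the algebraic inequality of \cite[Lemma 3.5]{CMSS} (this is exactly the point where the nonlocal term behaves well under monotone compositions). The right-hand side reduces to $\int_\Om u_k^{2\nu-1-\gamma}\,dx$, which is controlled by the Sobolev norm of $u_k^\nu$ precisely because $\nu>\frac{\gamma+1}{4}$ ensures $\frac{2\nu-1-\gamma}{\nu} < 2$. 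Passing $k\to\infty$, monotone convergence delivers $u_k\uparrow\hat u$, Fatou and weak lower semicontinuity give $\hat u^\nu \in H^1_0(\Om)$, and the weak formulation in (i) follows by passing to the limit after noting the uniform pointwise lower bound on $u_k$ on the support of the test function.

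The delicate part is the boundary asymptotic behavior in (iii). I would construct explicit sub- and supersolutions based on powers of $\de$ in an interior collar of $\partial\Om$: for the candidate $w = c\,\de^\alpha$ (and $w = c\,\de\ln^{1/2}(d_\Om/\de)$ when $\gamma = 1$), one plugs into $\mc M w = -\De w + (-\De)^s w$ and checks that the local term $-\De w \sim \alpha(1-\alpha) c \,\de^{\alpha-2}$ is the leading contribution near $\partial\Om$, matching the singular right-hand side $w^{-\gamma}\sim c^{-\gamma}\de^{-\al\gamma}$ exactly when $\alpha = 1$ (if $\gamma<1$), or $\alpha = \frac{2}{\gamma+1}$ (if $\gamma>1$), with the logarithmic correction bridging the two regimes at $\gamma=1$. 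The two-sided estimate defining $\mc G(\Om)$ then follows by invoking the weak comparison principle for $\mc M$ against $\hat u$.

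The main obstacle is exactly the verification that the nonlocal contribution $(-\De)^s w$ does not overwhelm $-\De w$ in the boundary layer. Unlike $-\De \de^\al$, the fractional Laplacian of a power of the distance is not pointwise computable and its singular behaviour near $\partial\Om$ requires a careful asymptotic analysis (carried out in \cite{AR,BDVV4}). Once that verification is in place, the remaining claims—most notably the Sobolev regularity of $\hat u^\nu$ for $\gamma\geq 3$—follow from direct integration of $|\nabla \hat u^\nu|^2 \sim \de^{2(\alpha\nu -1)}$ over a tubular neighborhood, which is integrable precisely under the condition $\nu > \frac{\gamma+1}{4}$ identified above.
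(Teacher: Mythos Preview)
The paper does not give its own proof of this theorem: immediately before the statement it writes ``Using Theorems 2.6, 2.7, and 2.8 of \cite{AR}, we have the following'', and Theorem~\ref{ART} is simply quoted from Arora--R\u{a}dulescu. So there is no in-paper argument to compare against; your sketch is in fact a plausible reconstruction of what \cite{AR} does (approximation $\mc M u_k=(u_k+1/k)^{-\gamma}$, monotonicity via comparison, barriers built from powers of $\de$, passage to the limit).

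That said, one point in your outline is not quite in order. Your claim that for $\gamma<3$ ``testing with $u_k$ and combining Sobolev and H\"older yields a uniform bound for $\|u_k\|$'' only works cleanly for $\gamma<1$: there $\int_\Om u_k^{1-\gamma}\le C\|u_k\|^{1-\gamma}$ closes the estimate. For $1\le\gamma<3$ the integrand $u_k^{1-\gamma}$ is singular and no Sobolev/H\"older trick controls it without a pointwise lower bound. Likewise, for $\gamma\ge 3$ your assertion that $\int_\Om u_k^{2\nu-1-\gamma}$ is ``controlled by the Sobolev norm of $u_k^\nu$ because $\tfrac{2\nu-1-\gamma}{\nu}<2$'' is vacuous (that inequality holds for every $\nu>0$) and fails precisely when $2\nu-1-\gamma<0$, which is the relevant range near $\nu=\tfrac{\gamma+1}{4}$. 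In both regimes the correct route is the one you yourself sketch at the end: establish the lower barrier $u_k\succeq \de^{2/(\gamma+1)}$ (resp.\ $\de$, $\de\ln^{1/2}$) \emph{first}, at the level of the approximations, and then either integrate $|\na u_k^\nu|^2\sim \de^{2(\alpha\nu-1)}$ directly or bound $\int_\Om u_k^{2\nu-1-\gamma}\preceq\int_\Om \de^{2(2\nu-1-\gamma)/(\gamma+1)}$, which is finite exactly when $\nu>\tfrac{\gamma+1}{4}$. So all the ingredients you list are right; only the logical order (barriers before the $H^1$ estimates, not after) needs to be fixed.
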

 Now we consider the following translated problem:
\begin{equation*}
(\hat{P}_\la)\begin{cases}	\mc M u+\hat{u}^{-\gamma}-(u+\hat{u})^{-\gamma}=\la\left(\ds\I\Om\frac{(u+\hat{u})^{2_\mu^\ast}}{|x-y|^\mu}dy\right)(u+\hat{u})^{2_\mu^\ast-1},~u>0~\text{in}~\Om,\\
	u=0~\text{in}~\mb R^n\setminus\Om.
	\end{cases}
	\end{equation*}
 Define the function
$f:\Om\times\mb R \rightarrow\mb R\cup\{-\infty\} $ by
\begin{equation*}
	f(x,\tau)=\begin{cases}
		(\hat{u}(x))^{-\gamma}-(\tau+\hat{u}(x))^{-\gamma}~&\text{if}~\tau+\hat{u}(x)>0,\\
		-\infty &\text{otherwise}.
	\end{cases}
\end{equation*}
 Also we define $\ds F(x,\tau)=\I0^\tau f(x,r)dr$. Note that $F$ is nonnegative and nondecreasing in $\tau$. Next we define the notion of subsolution and supersolution for problem $(\hat{P}_\la)$.

\begin{Definition}
	A nonnegative function $ u \in X_0$ is a subsolution (resp. a supersolution) of $(\hat{P}_\la)$ if the following hold:
	\begin{enumerate}[label=(\roman*)]
		\item $f(\cdot,u) \in L^1_{\text{loc}}(\Om)$;
		\item For any nonnegative $\psi \in C_c^\infty(\Om)$,
		\begin{equation*}
			\ld u,\psi\rd +\I\Om f(x,u)\psi dx -\la \I\Om\I\Om\frac{(u+\hat{u})^{2_\mu^\ast}(y)(u+\hat{u})^{2_\mu^\ast-1}(x)\psi(x)}{|x-y|^\mu}dxdy\leq 0~(\text{resp.}\geq 0).
		\end{equation*}
	\end{enumerate}
\end{Definition}
\begin{Definition}
	A function $u \in X_0$ is a weak solution to $(\hat{P}_\la)$ if it is both a subsolution  and a supersolution.
\end{Definition}
Our first lemma is the following implication.
\begin{Lemma}\label{exi}
	Suppose $u \in X_0$ is a weak solution of $(\hat{P}_\la)$. Then $u +\hat{u}$ is a weak solution of $(P_\la)$.
\end{Lemma}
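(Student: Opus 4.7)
The plan is to add the weak formulation of $(\hat P_\la)$ satisfied by $u$ to the integral identity for $\hat u$ from Theorem \ref{ART}(i), whereupon the singular terms combine to produce the required equation. Fix a nonnegative $\psi \in C_c^\infty(\Om)$. Since $u$ is both a sub- and a supersolution of $(\hat P_\la)$,
\begin{align*}
\ld u,\psi\rd + \int_\Om f(x,u)\psi\,dx = \la\int_\Om\int_\Om\frac{(u+\hat u)^{2_\mu^\ast}(y)(u+\hat u)^{2_\mu^\ast-1}(x)\psi(x)}{|x-y|^\mu}\,dxdy,
\end{align*}
while Theorem \ref{ART}(i) applied to the same $\psi$ yields $\ld\hat u,\psi\rd = \int_\Om \hat u^{-\gamma}\psi\,dx$. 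Because $u \geq 0$ (subsolutions are nonnegative by definition) and $\hat u \geq m_K > 0$ on $K := \text{supp}\,\psi$ by Theorem \ref{ART}(ii), we have $u+\hat u > 0$ on $K$, so $f(x,u) = \hat u^{-\gamma} - (u+\hat u)^{-\gamma}$ there. Adding the two identities causes $\hat u^{-\gamma}$ to cancel and yields exactly the formula of Definition \ref{d2.3}(iii) for $w := u + \hat u$; the identity then extends from nonnegative $\psi$ to arbitrary $\psi \in C^\infty(\mb R^n)$ with compact support in $\Om$ by writing $\psi = (\psi + M\eta) - M\eta$ for a fixed cutoff $\eta \geq 0$ with $\eta \equiv 1$ on $\text{supp}\,\psi$ and $M$ sufficiently large.

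It remains to check conditions (i) and (ii) of Definition \ref{d2.3} for $w$. Condition (ii) is immediate since $w \geq \hat u \geq m_K$ on any compact $K \subset \Om$. For condition (i) I choose $\ell = 1$ in the regime $\gamma < 3$, using that Theorem \ref{ART}(iii) gives $\hat u \in H^1_0(\Om)$ and $u \in X_0 \subset H^1_0(\Om)$, both vanishing a.e. outside $\Om$, so $w \in X_0$. For $\gamma \geq 3$ I take $\ell = \nu > (\gamma+1)/4$ from Theorem \ref{ART}(iii), for which $\hat u^\nu \in H^1_0(\Om)$; combining this with an $L^\infty$ control on $u$ obtained via the bootstrap argument of Section \ref{s3} applied to the identity just proved, together with $\hat u \in L^\infty(\Om)$ from Theorem \ref{ART}(i) and the inequality $(a+b)^\nu \leq C_\nu(a^\nu + b^\nu)$, a chain-rule computation produces $w^\nu \in H^1_0(\Om)$ with zero extension, hence $w^\nu \in X_0$.

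The one genuine point of care is condition (i) when $\gamma \geq 3$: establishing $(u+\hat u)^\nu \in X_0$ requires pointwise control on $u$ in addition to the Sobolev regularity $\hat u^\nu \in H^1_0(\Om)$ from Theorem \ref{ART}(iii), and the cleanest way to obtain this is to invoke the boundedness argument developed independently in Section \ref{s3}. Every other step reduces to a direct manipulation of the weak formulations of $(\hat P_\la)$ and $(P_0)$.
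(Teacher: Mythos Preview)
Your verification of conditions (ii) and (iii) is fine and matches the paper's approach, as does the case $\gamma<3$ for condition (i). The gap is in your treatment of condition (i) when $\gamma\geq 3$.

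You propose to obtain $(u+\hat u)^\nu\in H^1_0(\Om)$ from $u\in L^\infty$, $\hat u\in L^\infty$, $\hat u^\nu\in H^1_0(\Om)$, and the pointwise inequality $(a+b)^\nu\le C_\nu(a^\nu+b^\nu)$. But that inequality only controls the \emph{values} of $w^\nu$, not its gradient. By the chain rule,
\[
\nabla(w^\nu)=\nu\,w^{\nu-1}(\nabla u+\nabla\hat u).
\]
The term $w^{\nu-1}\nabla u$ is in $L^2$ because $w\in L^\infty$ and $\nabla u\in L^2$. The difficulty is $w^{\nu-1}\nabla\hat u$: from $\hat u^\nu\in H^1_0(\Om)$ you only know $\hat u^{\nu-1}\nabla\hat u\in L^2$, while $\nabla\hat u$ itself is \emph{not} in $L^2(\Om)$ for $\gamma\geq 3$. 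Since $u\geq 0$ forces $w\geq\hat u$ and hence $w^{\nu-1}\geq\hat u^{\nu-1}$, your $L^\infty$ bound on $w$ gives the estimate $w^{\nu-1}|\nabla\hat u|\le C|\nabla\hat u|$, which is useless. What is actually needed is the \emph{reverse} comparison $w\le C\hat u$ near $\partial\Om$, so that $w^{\nu-1}|\nabla\hat u|\le C\hat u^{\nu-1}|\nabla\hat u|\in L^2$.

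This is precisely what the paper supplies: it invokes Lemma~\ref{l3.6} to get $u+\hat u\in\mc G(\Om)$, i.e.\ $u+\hat u\sim\hat u$ near the boundary, which in particular yields $u+\hat u\le C\hat u$ there. The $L^\infty$ control on $u$ alone (even combined with $u\in H^1_0$) does not give this pointwise decay; one genuinely needs the boundary behaviour obtained via the comparison principle of Section~\ref{s3}. Your last paragraph correctly identifies that Section~\ref{s3} is needed, but the specific ingredient you extract from it (boundedness) is not the right one.
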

\begin{proof}
	Since $\hat{u}$ satisfies $(ii)$ of Definition \ref{d2.3} and $u$ is nonnegative, $u +\hat{u}$ also satisfies it. Now, $u +\hat{u}$ clearly satisfies \eqref{ewsd} provided $(u+\hat{u})^\ell \in X_0$ for some $\ell \geq 1$. We consider the following two cases:\\
	\textbf{Case A:} $\gamma \in (0,3)$. In this case, using Theorem \ref{ART}, we have $\hat{u} \in H_0^1(\Om)$,  $\hat{u}=0$ in $\mb R^n\setminus \Om$, and so $\hat{u} \in H^1(\mb R^n)$. Thus, clearly $u+\hat{u} \in X_0$.\\
	\textbf{Case B}: $\gamma \in [3,\infty)$. In this case, again using Theorem \ref{ART}, we see that $\hat{u}^\ell \in H_0^1(\Om)$ with $\ell =\frac{\gamma+1}{4} >1$. We claim that $(u+\hat{u})^\ell \in X_0$. Since $u+\hat{u} \in H^1_{\text{loc}}(\Om) \cap L^\infty(\mb R^n)$ (see Lemma \ref{lib}) and $u +\hat{u} =0$ a.e. in $\mb R^n \setminus \Om$, to prove the claim it is sufficient to show that $\na ( u+\hat{u})^\ell \in L^2(\Om)$, i.e.,
	\begin{align}\label{equation2.6}
		|\na (u+\hat{u})^\ell|^2 =\ell^2 |\na (u+\hat{u})(u+\hat{u})^{\ell-1}|^2 \in L^1(\Om).
	\end{align}
	Since by Lemma \ref{l3.6}, $u +\hat{u}$ behaves as $\hat{u}$ near boundary, to prove \eqref{equation2.6}, it suffices to show that $|\hat{u}^{\ell-1}\na \hat{u}|^2 \in L^1(\Om)$, which is true as $\hat{u}^\ell \in H_0^1(\Om)$. \QED
	\end{proof}
\begin{Lemma}\label{l2.6}
	For each $v \in X_0$, $v\geq0$, there exists a sequence $\{v_k\}_{k\in \mb N} \subset X_0$ such that $v_k \ra v$ strongly in $X_0$, where $0\leq v_1\leq v_2 \leq \cdots$ and $v_k$ has compact support in $\Om$ for each $k$.
\end{Lemma}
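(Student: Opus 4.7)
The strategy is a standard truncation by cutoff. Since $v\in X_0$ means $v\in H^1(\mathbb{R}^n)$ with $v\equiv0$ a.e.\ outside $\Om$, I will multiply $v$ by a monotone sequence of smooth cutoff functions concentrating on $\Om$ from inside, and prove convergence via Hardy's inequality plus the continuous embedding $H^1(\mathbb{R}^n)\hookrightarrow H^s(\mathbb{R}^n)$ to handle the Gagliardo seminorm.

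\textbf{Construction.} For $k\in\N$ set $\Om_k:=\{x\in\Om:\de(x)>2/k\}$. Choose $\eta_k\in C_c^\infty(\Om)$ with $0\le\eta_k\le1$, $\eta_k\equiv 1$ on $\Om_k$, $\mathrm{supp}(\eta_k)\subset\{\de>1/k\}$, $|\na\eta_k|\le Ck$, and (by e.g.\ taking $\eta_k=\rho_k*\chi_{\{\de>3/(2k)\}}$ with a suitable mollifier, or by defining $\eta_k$ as a smooth function of $\de$ and taking a pointwise maximum of the first $k$) arranged so that $\eta_k\le\eta_{k+1}$ pointwise. Define $v_k:=\eta_k v$. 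Since $\eta_k\in C_c^\infty(\Om)$ and $v\in H^1(\mathbb{R}^n)$, the product $v_k$ belongs to $H^1(\mathbb{R}^n)$ with compact support in $\Om$; hence $v_k|_{\Om}\in H^1_0(\Om)$ and $v_k\in X_0$. Non-negativity and monotonicity $0\le v_1\le v_2\le\cdots\le v$ are immediate from $v\ge0$ and the choice of $\eta_k$, and $v_k$ has compact support in $\Om$ by construction.

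\textbf{Convergence.} It remains to show $v_k\to v$ in $X_0$, i.e.\ $\|v_k-v\|\to0$. Writing $w_k:=v-v_k=(1-\eta_k)v\ge0$, I will show $w_k\to 0$ in $H^1(\mathbb{R}^n)$; then the Gagliardo seminorm of $w_k$ goes to zero as well, since by \cite[Proposition 2.2]{DPV} one has $[w_k]_s\le k(s)\|w_k\|_{H^1(\mathbb{R}^n)}$, which together with $\|\na w_k\|_{L^2(\mathbb{R}^n)}\to0$ and $\|w_k\|_{L^2(\mathbb{R}^n)}\to0$ (the latter by dominated convergence with dominant $v$) will finish the proof. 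For the gradient,
\begin{equation*}
\na w_k=(1-\eta_k)\na v-v\,\na\eta_k.
\end{equation*}
The first term converges to $0$ in $L^2(\mathbb{R}^n)$ by dominated convergence (bounded by $|\na v|\in L^2$ and pointwise $\to0$ in $\Om$). For the second term, $v\,\na\eta_k$ is supported in $A_k:=\{1/k<\de<2/k\}$ and $|\na\eta_k|\le Ck\lesssim C/\de$ on $A_k$, so
\begin{equation*}
\int_{\mathbb{R}^n}|v\,\na\eta_k|^2\,dx\le C\int_{A_k}\frac{v^2}{\de^2}\,dx,
\end{equation*}
and since $v\in H^1_0(\Om)$, Hardy's inequality yields $v/\de\in L^2(\Om)$, so the right-hand side tends to zero because the measure of $A_k$ vanishes.

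\textbf{Main difficulty.} The only nontrivial point is the bound on $\|v\,\na\eta_k\|_{L^2}$: the naive estimate $\|v\,\na\eta_k\|_{L^2}\lesssim k\|v\|_{L^2(A_k)}$ fails because $k\cdot|A_k|^{1/2}$ need not vanish. Hardy's inequality on $H^1_0(\Om)$ for the smooth bounded domain $\Om$ is exactly the tool needed to absorb the factor $k$ into the weight $1/\de$. Once this is in place, the whole argument reduces to dominated convergence and the Sobolev embedding, giving strong convergence in the full $X_0$-norm.
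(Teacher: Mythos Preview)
Your proof is correct and follows the standard cutoff-plus-Hardy argument that the paper intends by referring to \cite[Lemma~3.1]{GMS}; in the mixed setting here you rightly exploit the embedding $H^1(\mb R^n)\hookrightarrow H^s(\mb R^n)$ to reduce the Gagliardo part to the $H^1$ estimate. The only point worth tightening is the construction of a \emph{monotone} family $\{\eta_k\}$: your second suggestion, $\eta_k(x)=\phi(k\,\de(x))$ with a fixed nondecreasing $\phi\in C^\infty([0,\infty))$, $\phi\equiv 0$ on $[0,1]$ and $\phi\equiv 1$ on $[2,\infty)$, gives the monotonicity and the gradient bound directly (using that $\de$ is smooth near $\partial\Om$), whereas the mollification and pointwise-maximum alternatives do not obviously yield both smoothness and monotonicity simultaneously.
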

\begin{proof}
The proof is similar to the proof of \cite[Lemma 3.1]{GMS} and hence omitted.
 \QED
	\end{proof}
\begin{Lemma}\label{l2.7}
	Let $u \in X_0$ be a weak solution to $(\hat{P}_\la)$. Then for any $\psi \in X_0$, we have
	\begin{equation}\label{e2.4}
		\ld u,\psi\rd+\I\Om f(x,u)\psi dx-\la\I\Om\I\Om\frac{(u+\hat{u})^{2_\mu^*}(u+\hat{u})^{2_\mu^*-1}\psi}{|x-y|^\mu}dxdy=0.
	\end{equation}
\end{Lemma}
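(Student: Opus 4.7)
My plan is to extend identity \eqref{e2.4} from the class of nonnegative $C_c^\infty(\Omega)$ test functions --- where it follows immediately from the definition of weak solution, since the sub- and super-solution inequalities must both hold --- to all of $X_0$ in three stages: sign decomposition, mollification, and monotone truncation.

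By linearity of both sides of \eqref{e2.4} in $\psi$, combined with the decomposition $\psi = \psi^+ - \psi^-$ and the fact that $\psi^\pm \in X_0$ whenever $\psi \in X_0$, it suffices to prove the identity for nonnegative $\psi \in X_0$.

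I first consider a nonnegative $\psi \in X_0$ with compact support $K \Subset \Omega$. Mollifying with a nonnegative kernel produces a sequence $\psi_\epsilon \in C_c^\infty(\Omega)$, all supported in a fixed $\widetilde{K} \Subset \Omega$, with $\psi_\epsilon \geq 0$ and $\psi_\epsilon \to \psi$ in $X_0$. The weak-solution definition yields \eqref{e2.4} for each $\psi_\epsilon$. Letting $\epsilon \to 0$: the bilinear form $\langle u, \psi_\epsilon\rangle$ converges by strong $X_0$ convergence; the Choquard term converges because $u + \hat{u} \in L^{2^*}(\Omega)$ and the map $\phi \mapsto \iint \frac{(u+\hat{u})^{2_\mu^*}(y)(u+\hat{u})^{2_\mu^*-1}(x)\phi(x)}{|x-y|^\mu}\,dxdy$ is continuous on $L^{2^*}(\Omega)$ by Proposition \ref{hls} and H\"older's inequality; and the singular term converges by dominated convergence since $f(\cdot,u) \leq \hat{u}^{-\gamma} \in L^\infty(\widetilde{K})$ (because $\hat{u}$ is bounded below by a positive constant on any compact subset of $\Omega$ by Theorem \ref{ART}) while $\psi_\epsilon \to \psi$ in $L^1(\widetilde{K})$. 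Thus \eqref{e2.4} holds for every nonnegative, compactly supported $\psi \in X_0$.

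For an arbitrary nonnegative $\psi \in X_0$, I invoke Lemma \ref{l2.6} to extract an increasing sequence $\{v_k\}\subset X_0$ of compactly supported functions with $v_k \to \psi$ strongly in $X_0$ (and hence $v_k \uparrow \psi$ a.e., by monotonicity combined with $L^2$ convergence). The previous step gives \eqref{e2.4} for each $v_k$. The quadratic and Choquard terms again pass to the limit by strong $X_0$ convergence, while the singular term is handled by monotone convergence: since $f(\cdot,u) \geq 0$ (because $u \geq 0$ and $\tau \mapsto (\tau+\hat{u})^{-\gamma}$ is decreasing) and $0 \leq v_k \uparrow \psi$, one has $\int_\Omega f(x,u) v_k\,dx \nearrow \int_\Omega f(x,u)\psi\,dx \in [0,+\infty]$, and the finiteness of the limits of the other two terms forces this integral to be finite, delivering \eqref{e2.4}. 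The principal subtle point is precisely this a priori uncontrolled singular contribution, which may blow up as $\hat{u} \to 0$ near $\partial\Omega$; the monotone convergence argument in the final step bootstraps its integrability from the consistency of the remaining identity, obviating any independent boundary integrability estimate.
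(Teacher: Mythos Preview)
Your proof is correct and follows essentially the same three-stage approximation scheme as the paper: pass from $C_c^\infty(\Omega)$ test functions to compactly supported nonnegative $X_0$ functions (you use mollification where the paper uses a generic $C_c^\infty$ approximating sequence with uniformly compact supports), then to arbitrary nonnegative $\psi\in X_0$ via Lemma~\ref{l2.6} and monotone convergence, and finally to general $\psi$ by the decomposition $\psi=\psi^+-\psi^-$. The only cosmetic difference is that you perform the sign decomposition first rather than last; the key observation that the singular term is controlled locally by $\hat u^{-\gamma}\in L^\infty_{\mathrm{loc}}(\Omega)$ and globally by monotone convergence (with finiteness inherited from the other terms of the identity) matches the paper exactly.
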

\begin{proof}
	Let $0 \leq \psi \in X_0$. Then by Lemma \ref{l2.6}, there exists a sequence $\{\psi_k\}_{k \in \mb N}\subset X_0$ such that $\psi_k$ is increasing, each $\psi_k$ has compact support in $\Om$ and $\psi_k \ra \psi$ strongly in $X_0$. For each fixed $k$, we can find a sequence $\{\varphi_l^k\}_{l \in \mb N} \subset C_c^\infty(\Om)$ such that $\varphi_l^k \geq0$, $\bigcup_l \text{supp} \varphi_l^k$ is contained in a compact subset of $\Om$, $\{|\varphi_l^k|_\infty\}$ is bounded and $\varphi_l^k \ra \psi_k$ strongly as $l \ra \infty$. Since $u$ is a weak solution of $(\hat{P}_\la)$, we get 
		\begin{equation*}
		\ld u,\varphi_l^k\rd+\I\Om f(x,u)\varphi_l^k dx-\la\I\Om\I\Om\frac{(u+\hat{u})^{2_\mu^*}(u+\hat{u})^{2_\mu^*-1}\varphi_l^k}{|x-y|^\mu}dxdy=0.
	\end{equation*}
Now by the strong convergence of $\varphi_l^k \ra \psi_k$ in $X_0$ as $l \ra\infty$, we deduce that
\begin{equation*}
	\ld u,\psi_k\rd+\I\Om f(x,u)\psi_k dx-\la\I\Om\I\Om\frac{(u+\hat{u})^{2_\mu^*}(u+\hat{u})^{2_\mu^*-1}\psi_k}{|x-y|^\mu}dxdy=0.
\end{equation*}
Now using the monotone convergence theorem, dominated Convergence theorem and the strong convergence of $\psi_k$ in $X_0$, we obtain $f(x,u)\psi \in L^1(\Om)$ and we have \eqref{e2.4} for any $0\leq \psi \in X_0$. Finally, the result for general $\psi \in X_0$ holds due to the fact that $\psi =\psi^+-\psi^-$ and both $\psi^+$ and $\psi^-$ are nonnegative members of $X_0$. \QED
	\end{proof}
\subsection{Notion of Nonsmooth analysis}
To obtain the existence of nontrivial solutions to problem $(P_\la)$, we use some nonsmooth analysis tools. In this subsection we collect some basic definitions, observations and recall  a version of the linking theorem adapted to nonsmooth functionals. We begin with the following definition:
\begin{Definition}
	Let $V$ be a Hilbert space and $I: V \ra (-\infty,\infty]$ be a proper (i.e. $I \not \equiv \infty)$ lower semicontinuous functional.
	\begin{enumerate}[label=(\roman*)]
		\item Let $D(I)=\{u \in V:I(u)<\infty\}$ be the domain of $I$. For every $u \in D(I)$, we define the Fr\'{e}chet subdifferential of $I$ at $u $ as the set
		\begin{equation*}
			\partial^-I(u)=\left\{z \in V:\liminf_{v \ra u}\frac{I(v)-I(u)-\ld z,v-u\rd_V}{\|v-u\|_V}\geq 0\right\}.
		\end{equation*}
	\item For each $u \in V$, we define
	\begin{equation*}
		||| \partial^-I(u)|||=\begin{cases}
 			\min\{\|z\|_V:z \in \partial^-I(u)\}~&\text{if}~ \partial^-I(u)\ne\emptyset,\\
			\infty &\text{if}~\partial^-I(u)=\emptyset.
		\end{cases}
	\end{equation*}
	\end{enumerate}
	\end{Definition}
We know that $\partial^-I(u)$ is a closed convex set which may be empty. If $u \in D(I)$ is a local minimizer for $I$, then it can be seen that $0 \in \partial^-I(u)$.
\begin{Remark}
	We remark that if $I_0:V\ra (-\infty,\infty]$ is a proper, lower semicontinuous, convex functional, $I_1:V\ra \mb R$ is a $C^1$ functional, and $I=I_1 +I_0$, then $\partial^-I(u)=\nabla I_1(u)+\partial I_0 (u)$ for every $u \in D(I)=D(I_0)$, where $\partial I_0$ denotes the usual subdifferential of the convex functional $I_0$. Thus, $u$ is said to be a critical point of $I$ if $u \in D(I_0)$ and for every $v \in V$, we have $\ld \nabla I_1(u),v-u\rd_V +I_0(v)-I_0(u)\geq 0$.
\end{Remark}

\begin{Definition}
	For a proper, lower semicontinuous functional $I: V \ra (-\infty,\infty]$, we say that $I$ satisfies Cerami's variant of the Palais-Smale condition at a level $d$ (in short, $I$ satifies $(CPS)_d$), if any sequence $\{w_k\}_{k\in \mb N}\subset D(I)$ such that $I(w_k) \ra d$ and $(1+w_k)||| \partial^-I(w_k)||| \ra 0$ has a strongly convergent subsequence in $V$. 
\end{Definition}
Analogous to the mountain pass theorem, we have the following linking theorem for nonsmooth functionals.
\begin{Theorem}\label{t2.13}\cite{S} Let $V$ be a Hilbert space. Assume $I=I_0+I_1$, where $I_0:V\ra (-\infty,\infty]$ is a proper, lower semicontinuous, convex functional and $I_1:V\ra \mb R$ is a $C^1-$ functional. Let $B^n,~S^{n-1}$ denote the closed unit ball and its boundary in $\mb R^n$, respectively. Let $\varphi:S^{n-1}\ra D(I)$ be a continuous function such that 
	\begin{equation*}
		\Sigma=\{\psi\in C(B^n,D(I)):\psi\arrowvert_{S^{n-1}}=\varphi\}\ne\emptyset.
	\end{equation*}
Let $A$ be a relatively closed subset of $D(I)$ such that
\begin{equation*}
	A\cap\varphi(S^{n-1})=\emptyset,~A\cap \psi(B^n)\ne \emptyset~\text{for all}~\psi \in \Sigma,~\text{and}~\inf I(A)\geq \sup I(\varphi(S^{n-1})).
\end{equation*}	
Define $d =\inf_{\psi \in \Sigma}\sup_{x\in B^n}I(\psi(x))$. Assume that $d$ is finite and that $I$ satisfies $(CPS)_d$. Then there exists $u \in D(I)$ such that $I(u)=d$ and $0\in \partial^-I(u)$. Furthermore, if $\inf J(A)=d$, then there exists $u \in A\cap D(I)$ such that $I(u)=d$ and $0 \in \partial^-I(u)$.
\end{Theorem}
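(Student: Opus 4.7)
The plan is to follow the nonsmooth minimax framework of Szulkin~\cite{S}: argue by contradiction using a Cerami-type quantitative deformation lemma tailored to functionals of the form $I = I_0 + I_1$ with $I_0$ proper, convex and lower semicontinuous and $I_1$ of class $C^1$. Suppose the set
\begin{equation*}
K_d := \{u \in D(I) : I(u) = d,\ 0 \in \partial^- I(u)\}
\end{equation*}
is empty (respectively, $A \cap K_d = \emptyset$ for the \emph{furthermore} part). From $(CPS)_d$ one deduces a quantitative lower bound: there exist $\epsilon_0, \sigma > 0$ (and, in the second case, an open neighbourhood $U$ of $A$ avoiding $K_d$, which exists because $K_d$ is compact by $(CPS)_d$ and $A$ is relatively closed) such that
\begin{equation*}
|I(u) - d| \leq 2\epsilon_0 \implies (1+\|u\|)\,||| \partial^- I(u)||| \geq \sigma,
\end{equation*}
on $V$ respectively on $U$; otherwise a Cerami extraction produces an element of $K_d$ (respectively of $A \cap K_d$).

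The technical heart is the construction of a continuous deformation $\eta : [0,1] \times V \to V$ with $\eta(0, \cdot) = \mathrm{Id}$, the map $t \mapsto I(\eta(t, u))$ nonincreasing, $\eta(t, u) = u$ whenever $|I(u) - d| \geq 2\epsilon_0$, and $I(\eta(1, u)) \leq d - \epsilon_0$ whenever $I(u) \leq d + \epsilon_0$. Exploiting the identity $\partial^- I(u) = \nabla I_1(u) + \partial I_0(u)$ from the Remark preceding the definition of $(CPS)_d$, one selects a locally Lipschitz pseudo-gradient-like vector field $w(u)$ by combining the continuity of $\nabla I_1$ with a Michael-type continuous selection of an $\epsilon$-approximate minimizer of $||| \partial^- I |||$ on the non-critical region, weighted by the Cerami factor $1 + \|u\|$. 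The flow is obtained by solving the Cauchy problem $\dot\eta = -\chi(\eta)\, w(\eta)$ for a cutoff $\chi$ supported in the band $\{|I - d| \leq 2\epsilon_0\}$ (times a spatial cutoff supported in $U$, for the furthermore part); lower semicontinuity of $I_0$ combined with the one-sided convex chain rule $\tfrac{d}{dt} I_0(\eta(t,u)) \leq \langle z, \dot\eta \rangle$ valid for any $z \in \partial I_0(\eta(t,u))$ keeps the trajectory inside $D(I_0)$ and yields the descent estimate.

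With the deformation in hand, pick $\psi \in \Sigma$ with $\sup_{x \in B^n} I(\psi(x)) \leq d + \epsilon_0$ and set $\tilde\psi(x) := \eta(1, \psi(x))$. Since $\sup I(\varphi(S^{n-1})) \leq \inf I(A) \leq d$, one chooses $\epsilon_0$ small enough to ensure that $\eta(1, \cdot)$ is the identity on $\varphi(S^{n-1})$, so $\tilde\psi \in \Sigma$. By the linking hypothesis there exists $x_0 \in B^n$ with $\tilde\psi(x_0) \in A$, whence $\sup_{B^n} I \circ \tilde\psi \geq \inf I(A)$. Combined with the strict descent $\sup_{B^n} I \circ \tilde\psi \leq d - \epsilon_0$ and either the definition $d = \inf_\Sigma \sup_{B^n} I \circ \psi$ (general case) or the equality $\inf I(A) = d$ (furthermore case), this yields the desired contradiction, producing the critical point $u \in D(I)$ with $I(u) = d$ and $0 \in \partial^- I(u)$, located in $A$ in the second case.

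The main obstacle is the deformation lemma itself: unlike in the $C^1$-setting, the descent direction must be selected continuously out of the set-valued Fréchet subdifferential, and the generated flow must remain in the effective domain $D(I_0)$ while $I$ strictly decreases. Convexity of $I_0$ is indispensable here, since it provides the one-sided chain rule that propagates the descent of $I_0$ despite the lack of classical differentiability. A further delicate point is the compatibility between the level-set cutoff, the Cerami weight $1 + \|u\|$, and (for the furthermore part) the spatial cutoff separating $A$ from $K_d \setminus A$: these must be arranged so that the aggregate descent along the flow exceeds the threshold $\epsilon_0$ on the whole relevant region.
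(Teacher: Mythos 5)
First, a point of reference: the paper offers no proof of this statement at all — Theorem \ref{t2.13} is quoted verbatim (in a Cerami-condition variant) from Szulkin \cite{S}, so your proposal can only be compared with the argument of that reference. Your overall skeleton is of the right shape: argue by contradiction, use $(CPS)_d$ to get a quantitative non-criticality bound on a band around the level $d$, build a deformation fixing low sublevels, and play it against the linking inequalities; the compactness of $K_d$ and the idea of localizing near $A$ for the ``furthermore'' clause are also fine.

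The genuine gap sits exactly at what you call the technical heart. You cannot obtain a (locally Lipschitz, or even continuous) pseudo-gradient field by a ``Michael-type continuous selection of an $\epsilon$-approximate minimizer of $|||\partial^- I|||$'': Michael's theorem needs the multivalued map to be lower semicontinuous with closed convex values, whereas $u\mapsto\partial^- I(u)=\nabla I_1(u)+\partial I_0(u)$ is only (strong-weak) closed, and the minimal-norm selection is in general discontinuous. This is precisely the obstruction that makes pseudo-gradient/ODE methods unavailable when $I_0$ is merely convex and lower semicontinuous, and it is why Szulkin's proof takes a different route: near each non-critical point he picks, directly from the definition of the subdifferential, a single point $w$ with $\langle \nabla I_1(u),w-u\rangle+I_0(w)-I_0(u)<-\epsilon\|w-u\|$, glues finitely many such choices by a partition of unity, and deforms along the straight segments $u\mapsto (1-t)u+t\sum_i\lambda_i(u)w_i$; the descent estimate then comes from convexity, $I_0((1-t)u+tw)\le (1-t)I_0(u)+tI_0(w)$, with no chain rule and no flow-invariance question for $D(I_0)$ (an issue your ODE construction would also have to resolve and does not). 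The minimax argument is then completed via Ekeland's variational principle together with the compactness condition. A secondary defect: your step ``choose $\epsilon_0$ so that $\eta(1,\cdot)$ is the identity on $\varphi(S^{n-1})$'' requires $\sup I(\varphi(S^{n-1}))<d$; the hypotheses allow the degenerate case $\sup I(\varphi(S^{n-1}))=\inf I(A)=d$, in which the band around $d$ meets the boundary data and the deformed map need not stay in $\Sigma$, so that case must be routed through the $A$-localized (``furthermore'') argument rather than the general-case argument as you wrote it.
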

\section{Regularity of weak solutions and comparison principle} \label{s3}
In this section, we prove regularity results about nonnegative weak solutions to $({P}_\la)$. For this, we first investigate the regularity of nonnegative weak solution to $(\hat{P}_\la)$. We start with the $L^\infty$ estimates obtained by Moser type iterations:
\begin{Lemma}\label{lib}
	Any nonnegative solution to $(\hat{P}_\la)$ belongs to $L^\infty(\mb R^n)$.
\end{Lemma}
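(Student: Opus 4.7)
The plan is to run a Moser-type iteration tailored to the critical Hartree source term and the mixed operator $\mc M$. Since Theorem \ref{ART} provides $\hat u \in L^\infty(\Om)$, bounding $u$ in $L^\infty(\Om)$ suffices, and the Dirichlet condition $u\equiv 0$ in $\mb R^n\setminus\Om$ then gives $u\in L^\infty(\mb R^n)$. For $M>0$ and $\beta\geq 0$ I would take the admissible test function $\psi_M := u\,u_M^{2\beta}\in X_0\cap L^\infty(\Om)$, with $u_M := \min\{u,M\}$, and insert it into the equation via Lemma \ref{l2.7}.

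With this choice, the singular contribution $\int_\Om\bigl(\hat u^{-\gamma}-(u+\hat u)^{-\gamma}\bigr)\psi_M\,dx$ is nonnegative and can be discarded. For the local part of $\ld u,\psi_M\rd$ the chain rule yields a lower bound $\frac{c}{\beta+1}\int_{\mb R^n}|\na(u\,u_M^\beta)|^2\,dx$, while for the nonlocal part a standard pointwise algebraic inequality from the fractional Moser theory (applied to the monotone map $\tau\mapsto \tau\min\{\tau,M\}^{2\beta}$) gives a lower bound $\frac{c}{(\beta+1)^2}[u\,u_M^\beta]_s^2$. Combining these with the mixed Sobolev embedding $X_0\hookrightarrow L^{2^*}(\mb R^n)$ (recall $S_M=S$) yields
\[
\|u\,u_M^\beta\|_{L^{2^*}(\mb R^n)}^2 \leq C(\beta+1)^2\,\la \I\Om\I\Om \frac{(u+\hat u)^{2_\mu^\ast}(y)(u+\hat u)^{2_\mu^\ast-1}(x)\psi_M(x)}{|x-y|^\mu}\,dy\,dx.
\]

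The right-hand side is estimated using the Hardy-Littlewood-Sobolev inequality (Proposition \ref{hls}) with $r=2n/(2n-\mu)$ followed by H\"older. The main obstacle is that $2_\mu^\ast$ is critical, so the raw bound produced this way does not close the iteration. I would use the standard truncation trick: for $K>0$ large, decompose $(u+\hat u)^{2_\mu^\ast}=(u+\hat u)^{2_\mu^\ast}\chi_{\{u+\hat u\leq K\}} + (u+\hat u)^{2_\mu^\ast}\chi_{\{u+\hat u>K\}}$. The first piece is uniformly bounded and yields a subcritical source; the $L^r$-norm of the second piece is small for $K$ large, since $u+\hat u\in L^{2^*}(\Om)$ and $2_\mu^\ast r=2^*$, so by absolute continuity of the integral its contribution can be absorbed into the left-hand side.

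What remains is a Moser-type recurrence of the form $\|u\,u_M^\beta\|_{L^{2^*}}^2 \leq C(\beta)\int_\Om(1+w^{2q})u\,u_M^{2\beta}\,dx$ with $w:=u+\hat u$ and a subcritical exponent $2q<2^*$. Letting $M\to\infty$ and iterating along $p_k = 2^*\bigl(2^*/(2q)\bigr)^k$ first yields $u\in L^p(\Om)$ for every finite $p$, and a further Moser step then produces the desired $u\in L^\infty(\Om)$, hence $u\in L^\infty(\mb R^n)$.
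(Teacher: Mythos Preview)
Your proposal is correct and follows essentially the same Moser iteration as the paper: the same truncated test function $u\,u_M^{2\beta}$ (the paper writes it as $u(u_\tau)^{q-2}$ with $q-2=2\beta$), the same separate lower bounds for the local and nonlocal parts of $\ld u,\psi_M\rd$ (the paper invokes \cite[Lemma~3.5]{GGS} for the fractional algebraic inequality), the same Sobolev step, and the same reduction of the critical Hartree source by splitting at a large level $K$ and absorbing the tail---the paper compresses this last part into ``the rest of the proof follows similarly as the proof of \cite[Lemma~4.1]{GGS}''. One small slip: $\psi_M=u\,u_M^{2\beta}$ is not a priori in $L^\infty(\Om)$ since $u$ is not yet known to be bounded; however this is harmless because Lemma~\ref{l2.7} allows any $\psi\in X_0$, and $\psi_M\in X_0$ is clear.
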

\begin{proof}
	Let $u$ be a nonnegative solution to $(\hat{P}_\la)$. We define $u_\tau =\min\{u,\tau\}$ for $\tau> 0$. Let $\psi =u(u_\tau)^{q-2}$, $q \geq 3$ be a test function to problem $(\hat{P}_\la)$. Now
	\begin{align*}
		\na(u u_\tau^{\frac{q}{2}-1})=u_\tau^{\frac{q}{2}-1}\na u +\left(\frac{q}{2}-1\right)u_\tau^{{\frac{q}{2}-2}}u\na u_\tau.
	\end{align*}
This implies
\begin{align*}
	\left|\na (u(u_\tau)^{\frac{q}{2}-1})\right|^2=&\sum_{i=1}^{n}\left(u_\tau^{\frac{q}{2}-1}\frac{\partial u}{\partial x_i}+\left(\frac{q}{2}-1\right)u_\tau^{{\frac{q}{2}-2}}u\frac{\partial u_\tau}{\partial x_i}\right)^2\leq 2\left(u_\tau^{q-2}|\na u|^2+\frac{q^2}{4}u_\tau^{q-4}u^2|\na u_\tau|^2\right)\\
	\leq& \frac{q^2}{2}\left(u_\tau^{q-2}|\na u|^2+u_\tau^{q-4}u^2|\na u_\tau|^2\right).
\end{align*}
Thus,
\begin{equation}\label{e3.1}
	\I\Om 	\left|\na (u(u_\tau)^{\frac{q}{2}-1})\right|^2 \leq \frac{q^2}{2}\left(\I\Om u_\tau^{q-2}|\na u|^2+\I{\{u<\tau\}}u^{q-2}|\na u_\tau|^2\right).
\end{equation}
Also we have
\begin{align}\nonumber\label{e3.2}
\I\Om \na u\na \psi =	\I\Om \na u\cdot\na (u(u_\tau)^{q-2})=&\I\Om u_\tau^{q-2}|\na u|^2+(q-2)\I\Om u_\tau^{q-3}u \na u\cdot \na u_\tau\\
	\geq&\I\Om u_\tau^{q-2}|\na u|^2 +\I{\{u<\tau\}}u^{q-2}|\na u|^2.
\end{align}
Combining \eqref{e3.1} and \eqref{e3.2}, we get
\begin{equation}\label{e3.3} 
	\I\Om 	\left|\na (u(u_\tau)^{\frac{q}{2}-1})\right|^2 \leq Cq^2 \I\Om \na u\na\psi.
\end{equation}
Now from \cite[Lemma 3.5]{GGS}, we have the following inequality:
\begin{equation}\label{e3.4}
	\frac{4(q-1)}{q^2}\left(a|a_\tau|^{\frac{q}{2}-1}-b|b_\tau|^{\frac{q}{2}-1}\right)^2\leq(a-b)(a|a_\tau|^{q-2}-b|b_\tau|^{q-2}).
\end{equation}
where $a,~b \in \mb R$, $q \geq 2$, $a_\tau=\min\{a,\tau\}$ and $b_\tau=\min\{b,\tau\}$. Using \eqref{e3.4} with $a =u(x)$ and $b=u(y)$, we obtain
\begin{equation}\label{equation3.5}
	[u(u_\tau)^{\frac{q}{2}-1}]_s^2 \leq \frac{Cq^2}{q-1}\I{\mb R ^n}\I{\mb R^n}\frac{u(x)-u(y)(\psi(x)-\psi(y))}{|x-y|^{n+2s}}dxdy.
\end{equation}
Using \eqref{e3.3}, \eqref{equation3.5} and Sobolev inequality, we get
\begin{align*}
	|u(u_\tau)^{\frac{q}{2}-1}|_{2^*}^2 \leq& C\left(\ll u(u_\tau)^{\frac{q}{2}-1}\gg^2+	[u(u_\tau)^{\frac{q}{2}-1}]_s^2\right)\leq Cq^2\ld u,\psi\rd\\
	=&Cq^2\left(-\I\Om f(x,u)u(u_\tau)^{q-2}dx+\I\Om\I\Om\frac{(u+\hat{u})^{2_\mu^*}(u+\hat{u})^{2_\mu^*-1}u(u_\tau)^{q-2}}{|x-y|^\mu}dxdy\right).
\end{align*}
The rest of the proof follows similarly as the proof of \cite[Lemma 4.1]{GGS}.\QED
	\end{proof}
\begin{Lemma}
	Let $q>0$ and let $v \in L^{(q+1)/q}(\Om)$ be a positive function and $u \in X_0 \cap L^{q+1}(\Om)$ a positive weak solution to
	\begin{equation*}
	\mc Mu+f(x,u)=v~\text{in}~\Om,~u=0~\text{in}~\mb R^n\setminus \Om.
		\end{equation*}
	Then $(u+\hat{u}-\e)^+ \in X_0$ for every $\e>0$. 
\end{Lemma}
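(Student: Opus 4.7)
The plan is to write $\phi := (u+\hat{u}-\e)^+$ as the monotone pointwise limit of explicit elements of $X_0$, obtain a uniform $X_0$-energy bound on the approximating sequence, and conclude via weak compactness and lower semicontinuity in the Hilbert space $X_0$.

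First, I would set $\hat{u}_n := (\hat{u} - 1/n)^+$ for $n \in \mb N$. Since Theorem \ref{ART} gives $\hat{u} \in \mc G(\Om) \cap H^1_{\text{loc}}(\Om) \cap L^\infty(\Om)$, the level set $\{\hat{u} > 1/n\}$ is compactly contained in $\Om$, so $\hat{u}_n$ is an $H^1(\mb R^n)$ function with compact support in $\Om$, whence $\hat{u}_n \in X_0$. Setting $w_n := u + \hat{u}_n \in X_0$ and $\phi_n := (w_n - \e)^+$, the stability of $X_0$ under truncation yields $\phi_n \in X_0$; moreover, $\phi_n \nearrow \phi$ pointwise a.e.\ as $n \to \infty$.

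The central step is a uniform bound $\sup_n \|\phi_n\| < \infty$. The standard pointwise monotonicity inequalities
$$\I{\mb R^n} \na w_n \cdot \na \phi_n\, dx = \I{\mb R^n} |\na \phi_n|^2\, dx, \quad (w_n(x) - w_n(y))(\phi_n(x) - \phi_n(y)) \geq (\phi_n(x) - \phi_n(y))^2$$
give $\|\phi_n\|^2 \leq \ld w_n, \phi_n \rd = \ld u, \phi_n \rd + \ld \hat{u}_n, \phi_n \rd$. Testing the equation $\mc Mu + f(\cdot, u) = v$ against $\phi_n \in X_0$ (admissible after a density argument in the spirit of Lemma \ref{l2.7}) and using $f(\cdot, u) \geq 0$ together with $v \in L^{(q+1)/q}(\Om)$ and the pointwise bound $\phi_n \leq u + \hat{u}$ (with $u \in L^{q+1}(\Om)$ and $\hat{u} \in L^\infty(\Om)$), one gets
$$\ld u, \phi_n \rd \leq \I\Om v\, \phi_n\, dx \leq |v|_{(q+1)/q} \bigl(|u|_{q+1} + |\hat{u}|_\infty |\Om|^{1/(q+1)}\bigr) =: C(\e).$$

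The hardest step is to control $\ld \hat{u}_n, \phi_n \rd$ uniformly in $n$. In the case $\gamma < 3$, the inclusion $\hat{u} \in X_0$ granted by Theorem \ref{ART} gives $\|\hat{u}_n\| \leq \|\hat{u}\|$, and Cauchy--Schwarz closes the argument. In the range $\gamma \geq 3$, however, $\hat{u}$ need not lie in $X_0$ and the norm $\|\hat{u}_n\|$ may diverge; here one must exploit the PDE $\mc M \hat{u} = \hat{u}^{-\gamma}$ tested against a suitable compactly-supported truncation of $\phi_n$, together with the weighted regularity $\hat{u}^{\nu} \in H_0^1(\Om)$ for $\nu > (\gamma+1)/4$ and the sharp boundary profile $\hat{u} \sim \de^{2/(\gamma+1)}$, to absorb the singular gradient of $\hat{u}$ by powers of $u$ made available by $u \in L^{q+1}(\Om)$. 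Once $\sup_n \|\phi_n\| < \infty$ is secured, weak compactness of $\{\phi_n\}$ in $X_0$ together with the monotone pointwise convergence $\phi_n \nearrow \phi$ identifies the weak limit with $\phi$ and delivers $\phi \in X_0$.
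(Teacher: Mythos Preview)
Your approximation $\phi_n=(u+(\hat u-1/n)^+-\e)^+$ is exactly the one the paper uses, and the weak-compactness endgame is fine. The gap is in how you split the energy estimate. By invoking $f(\cdot,u)\ge 0$ to throw away the singular term when you test the $u$-equation, you discard precisely the piece that is needed to neutralize the singularity coming from $\hat u$. Concretely, testing $\mc M\hat u=\hat u^{-\gamma}$ (via the Kato/convexity inequality $\mc M(\hat u-1/n)^+\le \chi_{\{\hat u>1/n\}}\,\mc M\hat u$) against $\phi_n$ yields $\ld \hat u_n,\phi_n\rd\le \int_\Om \hat u^{-\gamma}\phi_n$. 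For $\gamma\ge 3$ this integral is \emph{not} controlled by any weighted Sobolev regularity of $\hat u$ or by $u\in L^{q+1}$: near $\partial\Om$ one has $\hat u^{-\gamma}\sim \de^{-2\gamma/(\gamma+1)}$, and $\phi_n$ carries no compensating decay. The ``absorb the singular gradient of $\hat u$ by powers of $u$'' step in your outline has no mechanism behind it.

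The fix---and this is what the paper does---is to \emph{not} drop $f$ but to add the two tested identities before estimating:
\[
\ld u,\phi_n\rd+\ld \hat u_n,\phi_n\rd \le \int_\Om\big(\hat u^{-\gamma}-f(x,u)\big)\phi_n\,dx+\int_\Om v\,\phi_n\,dx
=\int_\Om (u+\hat u)^{-\gamma}\phi_n\,dx+\int_\Om v\,\phi_n\,dx.
\]
On $\{\phi_n>0\}$ one has $u+\hat u\ge u+\hat u_n>\e$, so the first integral is bounded by $\e^{-\gamma}\int_\Om\phi_n$, and the second by $|v|_{(q+1)/q}|\phi_n|_{q+1}$; both are uniform in $n$ since $0\le\phi_n\le u+\hat u\in L^{q+1}$. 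Combined with your inequality $\|\phi_n\|^2\le\ld u+\hat u_n,\phi_n\rd$, this gives the uniform bound for all $\gamma>0$ in one stroke, with no case split. Your Cauchy--Schwarz argument for $\gamma<3$ is correct but unnecessary once this cancellation is used.
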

\begin{proof}
	Let $\e_1,~\e_2>0$ and set $\varphi =\min\{u,\e_1-(\hat{u}-\e_2)^+\} \in X_0$. Note that $u -\varphi =(u+(\hat{u}-\e_2)^+-\e_1)^+ \in X_0$. Since
	\begin{equation*}
		0\leq v(u-\varphi)\leq vu +v\hat{u} \in L^1(\Om),
	\end{equation*}
and using the arguments as  in the proof of Lemma \ref{l2.7}, we can show that $f(\cdot,u)(u-\varphi) \in L^1(\Om)$ and 
\begin{equation*}
	\ld u,u-\varphi\rd+\I\Om f(x,u)(u-\varphi) dx-\I\Om v(u-\varphi)=0.
\end{equation*}
Now using the following inequality for the fractional Laplacian:
\begin{equation*}
	(-\De)^s g(u) \leq g^{\prime}(u)(-\De)^s u,
\end{equation*}
where $g$ is a convex piecewise $C^1$ with bounded derivative  function, we have
\begin{align*}
	\ld (\hat{u}-\e_2)^+, \psi\rd \leq 	\ld \hat{u}, \psi\rd=\I\Om\hat{u}^{-\gamma}\psi dx,~\text{for every}~0\leq \psi \in C_c^\infty(\Om).
\end{align*}
So, arguing as in the proof of Lemma \ref{l2.7}, we can show that
\begin{equation*}
	\ld (\hat{u}-\e_2)^+,u-\varphi \rd \leq \I\Om\hat{u}^{-\gamma}(u-\varphi) dx.
\end{equation*}
We note that $u+\hat{u} \geq \e_1$ when $u \ne \varphi$, $(u+\hat{u})^{-\gamma}(u-\varphi) \in L^1(\Om)$ and $\hat{u}(u-\varphi) \in L^1(\Om)$. Therefore, we have
\begin{align*}
	\| (u+(\hat{u}-\e_2)^+-\e_1)^+\|^2=&\ld  (u+(\hat{u}-\e_2)^+-\e_1)^{+},u-\varphi\rd\\
	\leq&\I\Om \hat{u}^{-\gamma}(u-\varphi)dx-\I\Om f(x,u)(u-\varphi)dx+\I\Om v(u-\varphi)dx\\
	=& \I\Om(u+\hat{u})^{-\gamma}(u-\varphi)dx +\I\Om v(u-\varphi)dx\\
	\leq& \e_1^{-\gamma}\I\Om (u-\psi)dx +\I\Om v(u-\varphi)dx.
\end{align*}
Thus for any $\e_1>0$, we have that $(u+(\hat{u}-\e_2)^+-\e_1)^+$ is bounded in $X_0$ as $\e_2 \ra 0^+$. Hence, we conclude that $(u+\hat{u}-\e)^+ \in X_0$ for every $\e >0$. The second assertion follows from assertion (iii)  of theorem \ref{ART}.\QED
	\end{proof}
\begin{Corollary}\label{c3.3}
	 Let $v \in L^{2^*}(\Om)$ be a positive function and assume $g(x,v) =\left(\ds\I\Om\frac{v^{2_\mu^*}(y)}{|x-y|^\mu}dy\right)v^{2_\mu^*-1}$. Assume that $u \in X_0$ be a positive weak solution to
	 \begin{equation}\label{eq3.7}
	 	\mc M u +f(x,u)=g(x,v)~\text{in}~\Om,~u=0~\text{in}~\mb R^n\setminus \Om.
	 \end{equation}
 Then $(u+\hat{u}-\e)^+ \in X_0$ for every $\e >0$.
\end{Corollary}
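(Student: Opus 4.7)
The strategy is to reduce Corollary \ref{c3.3} to the preceding Lemma by verifying that $g(\cdot,v)$ sits in the right Lebesgue class to serve as the right-hand side there. I plan to choose $q=\frac{n+2}{n-2}$, so that $q+1=2^\ast$ and $\frac{q+1}{q}=(2^\ast)'=\frac{2n}{n+2}$. Since $u\in X_0$, the Sobolev embedding gives $u\in L^{2^\ast}(\Om)=L^{q+1}(\Om)$ automatically, so the hypothesis on $u$ in the preceding Lemma is built in; the task therefore reduces to proving $g(\cdot,v)\in L^{(2^\ast)'}(\Om)$.

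To do this I would factor $g(x,v)$ as the product of the Riesz potential $x\mapsto \I\Om \frac{v^{2_\mu^\ast}(y)}{|x-y|^\mu}\,dy$ and the pointwise factor $v^{2_\mu^\ast-1}(x)$, and estimate each factor separately. From $2^\ast/2_\mu^\ast=\frac{2n}{2n-\mu}$ one gets $v^{2_\mu^\ast}\in L^{2n/(2n-\mu)}(\Om)$; applying the Hardy-Littlewood-Sobolev inequality of Proposition \ref{hls} in its dual form then places the Riesz potential in $L^{2n/\mu}(\Om)$. From $2_\mu^\ast-1=\frac{n+2-\mu}{n-2}$ one gets $v^{2_\mu^\ast-1}\in L^{2n/(n+2-\mu)}(\Om)$. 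Combining these two factors via H\"{o}lder's inequality with conjugate exponents $\frac{2n}{\mu}$ and $\frac{2n}{n+2-\mu}$, whose reciprocals sum to $\frac{n+2}{2n}=\frac{1}{(2^\ast)'}$, yields the desired inclusion $g(\cdot,v)\in L^{(2^\ast)'}(\Om)$.

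Positivity of $g(\cdot,v)$ is inherited from positivity of $v$, so the previous Lemma applies with right-hand side $g(\cdot,v)$ and exponent $q=\frac{n+2}{n-2}$, delivering $(u+\hat u-\e)^+\in X_0$ for every $\e>0$. I do not anticipate any genuine obstacle: the argument is essentially exponent bookkeeping combining Sobolev embedding, Hardy-Littlewood-Sobolev, and H\"{o}lder. The one point requiring care is matching the integrability exponent $q+1$ in the previous Lemma to the natural integrability of $u$ coming from $X_0\hookrightarrow L^{2^\ast}$, which is why the choice $q=\frac{n+2}{n-2}$ is essentially forced.
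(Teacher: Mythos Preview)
Your proposal is correct and follows exactly the route the paper intends: the Corollary is stated without proof as an immediate application of the preceding Lemma, and your choice $q=\frac{n+2}{n-2}$ together with the Hardy--Littlewood--Sobolev and H\"older estimates verifies precisely that $g(\cdot,v)\in L^{(2^\ast)'}(\Om)$, which is the only missing hypothesis. The exponent bookkeeping is accurate throughout.
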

Now we establish a crucial comparison principle. It states as follows:
\begin{Lemma}\label{lcp}
Let $\mc H \in X_0^*$ (the dual of $X_0$) and let $v,~w \in H^1_\text{loc}(\Om)$ be such that $v,~w>0$ a.e in $\Om$, $v,~w \geq 0$ in $\mb R^n$, $v^{-\gamma},~w^{-\gamma} \in L^1_{\text{loc}}(\Om)$, $(v-\e)^+ \in X_0$ for all $\e>0$, $z \in L^1(\Om)$ and
\begin{equation}\label{e3.7} 
	\ld v,\psi\rd \leq \I\Om v^{-\gamma}\psi dx +(\mc H,\psi),~\ld w,\psi\rd \geq \I\Om w^{-\gamma}\psi dx +(\mc H,\psi),
\end{equation}
for all compactly supported function $0\leq\psi \in X_0\cap L^\infty(\Om)$. Then $v \leq w$ a.e in $\Om$.
\end{Lemma}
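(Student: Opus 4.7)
The guiding idea is to use a monotone truncation of $(v-w)^+$ as a test function in both inequalities of \eqref{e3.7} and subtract. Since $t\mapsto -t^{-\gamma}$ is strictly increasing on $(0,\infty)$, the singular terms on the right-hand side will have a definite (nonpositive) sign once the test function is supported on $\{v>w\}$, while the bilinear form $\ld v-w,\cdot\rd$ on the left will be nonnegative by the standard monotonicity inequality for local and nonlocal operators. These two opposite signs will force the truncation to vanish, yielding $v\le w$ a.e.

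\textbf{Construction of the test function.} For $\epsilon,M>0$, set $G(t):=\min\{(t-\epsilon)^+,M\}$, a nondecreasing Lipschitz map, and define
\[
\phi_{\epsilon,M}:=G(v-w)=\min\{(v-w-\epsilon)^+,M\}.
\]
Since $w\ge 0$ one has $(v-w-\epsilon)^+\le (v-\epsilon)^+$, and $(v-\epsilon)^+\in X_0$ by hypothesis, so $\phi_{\epsilon,M}\in X_0\cap L^\infty(\Om)$: its weak gradient is supported in $\{v>w+\epsilon\}\subset\{v>\epsilon\}$, where $\na v=\na(v-\epsilon)^+\in L^2$, and on $\mb R^n\setminus\Om$ the function $v$ vanishes (since $(v-\epsilon)^+\in X_0$ does), so $\phi_{\epsilon,M}=0$ there as well.

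\textbf{Testing and sign analysis.} Because \eqref{e3.7} is stated only for compactly supported tests, I first use Lemma~\ref{l2.6} to choose compactly supported $\psi_k\in X_0\cap L^\infty(\Om)$ with $0\le\psi_k\le\phi_{\epsilon,M}$ and $\psi_k\to\phi_{\epsilon,M}$ in $X_0$. Testing both inequalities with $\psi_k$ and subtracting gives
\[
\ld v-w,\psi_k\rd\;\le\;\int_\Om\bigl(v^{-\gamma}-w^{-\gamma}\bigr)\psi_k\,dx.
\]
Pass $k\to\infty$: on the left, $X_0$-convergence of $\psi_k$ together with the facts $\na v, \na w \in L^2(\{\psi_k\ne 0\})$ (the $\na w$ bound comes from restricting to the set where $\phi_{\epsilon,M}$ lives and using $v^{-\gamma},w^{-\gamma}\in L^1_{\text{loc}}$) handles the local and nonlocal pieces; on the right, monotone convergence applies since $v^{-\gamma}-w^{-\gamma}\le 0$ on the support of $\phi_{\epsilon,M}$ (where $v>w$). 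Hence
\[
\ld v-w,\phi_{\epsilon,M}\rd\;\le\;\int_\Om\bigl(v^{-\gamma}-w^{-\gamma}\bigr)\phi_{\epsilon,M}\,dx\;\le\;0.
\]
On the other hand, because $\phi_{\epsilon,M}=G(v-w)$ with $G$ nondecreasing, the local part of $\ld v-w,\phi_{\epsilon,M}\rd$ equals $\int_{\{0<v-w-\epsilon<M\}}|\na(v-w)|^2\,dx\ge 0$, and pointwise $((v-w)(x)-(v-w)(y))(\phi_{\epsilon,M}(x)-\phi_{\epsilon,M}(y))\ge 0$, so the nonlocal part is $\ge 0$ as well. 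Consequently every inequality above is an equality, and in particular $\int_\Om(v^{-\gamma}-w^{-\gamma})\phi_{\epsilon,M}\,dx=0$.

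\textbf{Conclusion and main obstacle.} The integrand in that last identity is nonpositive, and strictly negative on $A_{\epsilon,M}:=\{v>w+\epsilon\}\cap\{v-w-\epsilon<M\}$, where in addition $\phi_{\epsilon,M}>0$; hence $|A_{\epsilon,M}|=0$. Letting $M\to\infty$ yields $|\{v>w+\epsilon\}|=0$, and then $\epsilon\to 0^+$ gives $v\le w$ a.e.\ in $\Om$. The delicate point is the admissibility step: establishing that $\phi_{\epsilon,M}\in X_0$ and that one may pass from compactly supported $\psi_k$ to the (not a priori compactly supported) $\phi_{\epsilon,M}$ in \eqref{e3.7}. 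This hinges crucially on the hypothesis $(v-\epsilon)^+\in X_0$, which confines the relevant integrals to the "good" set $\{v>\epsilon\}$ where $v$ has $H^1$-regularity globally; the passage for the fractional bilinear form can then be carried out in the spirit of the analogous comparison argument in \cite{GGS1}.
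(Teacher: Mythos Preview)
Your direct subtraction strategy is appealing, but there is a genuine gap in the admissibility of the test function $\phi_{\epsilon,M}=\min\{(v-w-\epsilon)^+,M\}$. Its weak gradient is
\[
\nabla\phi_{\epsilon,M}=\chi_{\{\epsilon<v-w<\epsilon+M\}}\bigl(\nabla v-\nabla w\bigr),
\]
and while on this set $v>\epsilon$ gives $\nabla v=\nabla(v-\epsilon)^+\in L^2(\Om)$, nothing in the hypotheses controls $\nabla w$ there: you only know $w\in H^1_{\text{loc}}(\Om)$, and the set $\{v>w+\epsilon\}$ need not be compactly contained in $\Om$. The justification you offer (``the $\nabla w$ bound comes from \ldots\ $v^{-\gamma},w^{-\gamma}\in L^1_{\text{loc}}$'') is not an argument; local integrability of $w^{-\gamma}$ says nothing about $\nabla w$ near $\partial\Om$. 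Consequently $\phi_{\epsilon,M}$ may fail to lie in $X_0$, Lemma~\ref{l2.6} does not apply, and neither the left-hand pairing $\ld v-w,\phi_{\epsilon,M}\rd$ nor the limit $\ld v-w,\psi_k\rd\to\ld v-w,\phi_{\epsilon,M}\rd$ is justified.

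This is precisely the obstacle the paper's proof is built to circumvent. Instead of comparing $v$ to $w$ directly, the paper introduces an auxiliary function $z\in X_0$ obtained by minimizing a truncated convex functional over $K=\{\psi\in X_0:0\le\psi\le w\}$; since $z\le w$ by construction, the relevant test functions $\psi_t=\min\{z+t\psi,w\}$ differ from $z+t\psi$ only on $\{z+t\psi>w\}\subset\text{supp}\,\psi$, a compact set, so $\nabla w$ is only needed locally. One then proves $z$ is a supersolution in $X_0$ and compares $v$ to $z$ (where both test functions live in $X_0$), obtaining $v\le z+\epsilon\le w+\epsilon$. Your appeal to \cite{GGS1} does not help: the argument there follows exactly this auxiliary-minimizer route, not the direct truncation you propose.
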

\begin{proof}
	Let us denote $\Psi_k:\mb R \ra \mb R$ as the primitive of the function
	\begin{equation*}
		\tau \mapsto \begin{cases}
			\max\{-\tau^{-q},-k\}~&\text{if}~\tau>0,\\
			-k~&\text{if}~\tau\leq 0
		\end{cases}
	\end{equation*}
such that $\Psi_k(1)=0$. Next we define a proper lower semicontinuous, strictly convex functional $\widetilde{G}_{0,k}:L^2(\Om)\ra \mb R$ as
\begin{equation*}
	\widetilde{G}_{0,k}(u) =\begin{cases}
		\frac12 \|u\|^2+\I\Om \Psi_k(u)dx~&\text{if}~u \in X_0,\\
		\infty~&\text{if}~u \in L^2(\Om)\setminus X_0.
	\end{cases}
\end{equation*}
As we know, primitives are usually defined up to an additive constant, to prevent a possible unlikely choice we consider $G_{0,k}:L^2(\Om) \ra \mb R$ defined by
\begin{equation*}
	G_{0,k}(u)=\widetilde{G}_{0,k}(u)-\min \widetilde{G}_{0,k}=\widetilde{G}_{0,k}(u)-\widetilde{G}_{0,k}(u_{0,k}),
\end{equation*}
where $u_{0,k} \in X_0$ is the minimum of $\widetilde{G}_{0,k}$. In general, for $\mc H \in X_0^*$ we set
\begin{equation*}
	\widetilde{G}_{\mc H,k}(u)=\begin{cases}
		G_{0,k}(u)-(\mc H, u-u_{0,k})~&\text{if}~u \in X_0\\
		\infty~&\text{if}~u \in L^2(\Om)\setminus X_0.
		\end{cases}
\end{equation*}
Let $\e>0$, $k >\e^{-\gamma}$, and let $z$ be the minimum of the functional $\widetilde{G}_{\mc H,k}$ on the convex set $K =\{\psi\in X_0:0\leq \psi \leq w~\text{a.e in}~\Om\}$. Then for all $\psi \in K$ we get
\begin{equation}\label{e3.8}
	\ld z,\psi-z\rd \geq -\I\Om \Psi_k^{\prime}(z)(\psi-z)dx+(\mc H,\psi-z). 
\end{equation}
Let $0\leq \psi \in C_c^\infty(\Om)$, $t>0$. Define $\psi_t:=\min\{z+t\psi,w\}$. Noting that $w \in H^1_{\text{loc}}(\Om)$, $z \in X_0$, $\psi \in C_c^\infty(\Om)$, we have $\psi_t \in X_0$. Next we claim that $\psi_t$ is uniformly bounded in $X_0$ for all $t<1$. Using the continuous embedding of $H_0^1(\Om)$ into $H^s(\mb R^n)$, it is sufficient to show that $\ll\psi_t\gg$ is uniformly bounded in $t$. We have 
\begin{align*}
	\I\Om |\na \psi_t|^2 =&\I{\{z+t\psi\leq w\}} |\na z+t\na\psi|^2+\I{\{z+t\psi> w\}}|\na w|^2\\
	\leq& \I\Om |\na z|^2 +t^2 \I\Om |\na \psi|^2+2t \I\Om \na z \na \psi +\I{\text{supp}\psi}|\na w|^2\\
	\leq& \ll z \gg^2 +\ll \psi \gg^2 +\ll z\gg \ll \psi \gg +\I{\text{supp}\psi}|\na w|^2 <\infty.
\end{align*}
This proves the claim. Considering the subsequence (still denoted by $\psi_t$) such that $\psi_t \rightharpoonup z$ weakly in $X_0$ and taking $\psi=\psi_t$ in \eqref{e3.8}, we obtain
\begin{equation}\label{e3.9}
	\ld z, \psi_t -z \rd \geq  -\I\Om \Psi_k^{\prime}(z)(\psi_t-z)dx+(\mc H,\psi_t-z).
\end{equation}
Since $w$ is a supersolution and $w^{-\gamma}\geq -\Psi_k^{\prime}(w)$, we infer that $w$ satisfies
\begin{equation}\label{e3.10}
	\ld w, \psi\rd \geq -\I\Om \Psi_k^{\prime}(w)\psi dx +(\mc H, \psi).
\end{equation}
Using the facts that $\psi_t \leq w$, $\psi_t -z-t\psi \leq 0$ and $\psi_t -z-t\psi \ne 0$ only if $\psi_t =w$, we observe that
\begin{align}\nonumber\label{e3.11}
	&\I\Om \na \psi_t \na (\psi_t -z-t\psi) +\frac{C(n,s)}{2}\I{\mb R^n}\I{\mb R^n} \frac{(\psi_t(x)-\psi_t(y))((\psi_t -z-t\psi)(x)-(\psi_t -z-t\psi)(y))}{|x-y|^{n+2s}}dxdy\\ \nonumber
	\leq& \I\Om \na w \na (\psi_t -z-t\psi) +\frac{C(n,s)}{2}\I{\mb R^n}\I{\mb R^n}\frac{w(x)(\psi_t -z-t\psi)(x)}{|x-y|^{n+2s}}dxdy\\ \nonumber
	&+\frac{C(n,s)}{2}\I{\mb R^n}\I{\mb R^n}\frac{w(y)(\psi_t -z-t\psi)(y)}{|x-y|^{n+2s}}dxdy
	-\frac{C(n,s)}{2}\I{\mb R^n}\I{\mb R^n}\frac{w(x)(\psi_t -z-t\psi)(y)}{|x-y|^{n+2s}}dxdy\\
	&-\frac{C(n,s)}{2}\I{\mb R^n}\I{\mb R^n}\frac{w(y)(\psi_t -z-t\psi)(x)}{|x-y|^{n+2s}}dxdy
	= \ld w, \psi_t -z-t\psi\rd.
\end{align}
Similarly, $\ds \I\Om (\Psi_k^{\prime}(\psi_t)-\Psi_k^{\prime}(w))(\psi_t -z-t\psi) \leq 0$ and moreover $\Psi_k^{\prime}(w) \leq -w^{-\gamma}$. Taking into account \eqref{e3.7}, \eqref{e3.9}, \eqref{e3.10}, \eqref{e3.11} and above observations, we infer that
\begin{align*}
	\| \psi_t -z\|^2 &-\I\Om(-\Psi_k^{\prime}(\psi_t)+\Psi_k^{\prime}(z))(\psi_t-z)dx\\
	 =& \ld \psi_t,\psi_t-z\rd +\I\Om \Psi_k^{\prime}(\psi_t)(\psi_t-z)dx -\ld z,\psi_t-z\rd
	-\I\Om\Psi_k^{\prime}(z)(\psi_t -z)dx \\
	\leq&   \ld \psi_t,\psi_t-z\rd +\I\Om \Psi_k^{\prime}(\psi_t)(\psi_t-z)dx -(\mc H, \psi_t-z)\\
\end{align*}
\begin{align*}
	=&\ld \psi_t,\psi_t-z-t\psi\rd +\I\Om \Psi_k^{\prime}(\psi_t)(\psi_t-z-t\psi)dx -(\mc H, \psi_t-z-t\psi)\\
	&+t\left(\ld \psi_t,\psi\rd+\I\Om \Psi_k^{\prime}(\psi_t)\psi-(\mc H,\psi)\right)\\
	\leq&\ld w,\psi_t-z-t\psi\rd +\I\Om \Psi_k^{\prime}(w)(\psi_t-z-t\psi)dx -(\mc H, \psi_t-z-t\psi)\\
	&+t\left(\ld \psi_t,\psi\rd+\I\Om \Psi_k^{\prime}(\psi_t)\psi-(\mc H,\psi)\right)
	\leq t\left(\ld \psi_t,\psi\rd+\I\Om \Psi_k^{\prime}(\psi_t)\psi-(\mc H,\psi)\right).
\end{align*}
This yields
\begin{align*}
	\ld \psi_t,\psi\rd+\I\Om \Psi_k^{\prime}(\psi_t)\psi-(\mc H,\psi)\geq& \frac{1}{t}\left(\| \psi_t -z\|^2 -\I\Om|\Psi_k^{\prime}(\psi_t)-\Psi_k^{\prime}(z)|(\psi_t-z)dx\right)\\
	\geq&-\I\Om |\Psi_k^{\prime}(\psi_t)-\Psi_k^{\prime}(z)|\psi dx.
\end{align*}
Now using the weak convergence of $\psi_t$, monotone convergence theorem and dominated convergence theorem, we have
\begin{equation}\label{e3.12}
	\ld z,\psi\rd \geq -\I\Om \Psi_k^{\prime}(z)\psi dx +(\mc H,\psi). 
\end{equation}
Since $C_c^\infty(\Om)$ is dense in $X_0$, we infer that \eqref{e3.12} is true for all nonnegative $\psi \in X_0$. In particular, since $z \geq 0$ we have $(v-z-\e)^+ \in X_0$. Testing \eqref{e3.12} with  $(v-z-\e)^+$, we get 
\begin{equation}\label{e3.13}
	\ld z,(v-z-\e)^+\rd \geq -\I\Om \Psi_k^{\prime}(z)(v-z-\e)^+ dx +(\mc H,(v-z-\e)^+).
\end{equation}
Let us now consider $\Theta \in X_0$ such that $0 \leq \Theta \leq v$ a.e. in $\Om$. Let $\{\Theta_m\}$ be a sequence in $C_c^\infty(\Om)$ converging to $\Theta \in X_0$ and set $\tilde{\Theta}_m =\min\{{\Theta}_m^+,\Theta\}$. Testing \eqref{e3.7} with $\tilde{\Theta}_m$, we get
\begin{equation*}
	\ld v,\tilde{\Theta}_m\rd \leq \I\Om v^{-\gamma}\tilde{\Theta}_mdx +(\mc H, \tilde{\Theta}_m).
\end{equation*}
If $v^{-\gamma}\Theta \in L^1(\Om)$, then passing to the limit as $m \ra \infty$, we get
\begin{equation*}
	\ld v,{\Theta}\rd \leq \I\Om v^{-\gamma}{\Theta}dx +(\mc H, {\Theta}).
\end{equation*}
If $v^{-\gamma}\Theta \not\in L^1(\Om)$, then the above inequality is obviously still true. In particular, we have
\begin{equation}\label{e3.14}
	\ld v,(v-z-\e)^+\rd \leq \I\Om v^{-\gamma}(v-z-\e)^+dx +(\mc H, (v-z-\e)^+).
\end{equation}
Using \eqref{e3.13}, \eqref{e3.14} together with the fact that $k \geq \e^{-\gamma}$, we get
\begin{align*}
	\ld (v-z-\e)^+,(v-z-\e)^+\rd \leq& \ld v-z,(v-z-\e)^+\rd
	\leq  \I\Om (v^{-\gamma}+\Psi_k^{\prime}(z))(v-z-\e)^+dx\\
	=&  \I\Om (-\Psi_k^{\prime}(v)+\Psi_k^{\prime}(z))(v-z-\e)^+dx\leq 0.
\end{align*} 
Thus $v\leq z+\e \leq w+\e.$ Since $\e$ was arbitrary chosen, the proof follows.\QED
	\end{proof}
\begin{Lemma}\label{l3.5}
	Let $\la >0$ and let $ v $ be a weak solution to $(P_\la)$ as it is defined in Definition \ref{d2.3}. Then $v-\hat{u}$ is a positive weak solution to $(\hat{P}_\la)$ belonging to $L^\infty(\Om)$.
\end{Lemma}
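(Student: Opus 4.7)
Set $w := v - \hat{u}$. The plan is to verify that $w$ is a nonnegative element of $X_0$ satisfying the weak formulation of $(\hat{P}_\la)$, after which $L^\infty$-boundedness will follow from Lemma \ref{lib} and strict positivity from a strong minimum principle. First I would establish the ordering $\hat{u} \leq v$ a.e.\ in $\Om$ via the comparison principle of Lemma \ref{lcp}, taking $\mathcal{H}=0\in X_0^*$, $\hat{u}$ as subsolution and $v$ as supersolution. The required membership $(\hat{u}-\e)^+\in X_0$ for every $\e>0$ is guaranteed by $\hat{u}\in L^\infty(\Om)$ together with Theorem \ref{ART}(iii): when $\gamma<3$ one has $\hat{u}\in H^1_0(\Om)\subset X_0$ directly, while for $\gamma\geq 3$ one exploits $\hat{u}^\nu\in H^1_0(\Om)$ with $\nu>(\gamma+1)/4$ via the chain-rule bound $|\na\hat{u}|\leq \e^{1-\nu}\nu^{-1}|\na\hat{u}^\nu|$ on $\{\hat{u}>\e\}$. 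Because the Choquard contribution in $(P_\la)$ is nonnegative, Lemma \ref{l2.5} implies that $v$ is a supersolution of the pure singular problem satisfied (with equality) by $\hat{u}$; Lemma \ref{lcp} then yields $\hat{u}\leq v$, hence $w\geq 0$.

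The next step is to check that $w\in X_0$. Both $v$ and $\hat{u}$ vanish in $\R^n\setminus\Om$, so $w$ does too. The main obstacle is that, in the regime $\gamma\geq 3$, neither $v$ nor $\hat{u}$ is in $H^1_0(\Om)$ individually: only $v^\ell$ and $\hat{u}^\nu$ lie in $H^1_0(\Om)$, for suitable exponents $\ell,\nu>1$ coming from Definition \ref{d2.3}(i) and Theorem \ref{ART}(iii). However, $v$ and $\hat{u}$ share the same singular boundary profile prescribed by the class $\mathcal{G}(\Om)$ of Theorem \ref{ART}, so their leading singularities cancel in $w$. I would quantify this cancellation by testing the weak formulations of $v$ and $\hat{u}$ against truncations of the form $(w-\e)^+$ (which lie in $X_0$ by the same argument used for $(\hat{u}-\e)^+$ above, combined with Corollary \ref{c3.3}), then derive uniform estimates for $\|w\|$ and pass to the limit $\e\to 0^+$, mimicking the energy argument used in the proof of Corollary \ref{c3.3}. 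This is the delicate technical step.

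Once $w\in X_0$ is in hand, the weak formulation of $(\hat{P}_\la)$ is obtained by subtracting the equation for $\hat{u}$ (Theorem \ref{ART}(i)) from that of $v$ (Lemma \ref{l2.5}), tested against an arbitrary $\psi\in C_c^\infty(\Om)$:
\[
\ld w,\psi\rd+\I\Om\bigl(\hat{u}^{-\gamma}-(w+\hat{u})^{-\gamma}\bigr)\psi\,dx=\la\I\Om\I\Om\frac{(w+\hat{u})^{2_\mu^\ast}(y)(w+\hat{u})^{2_\mu^\ast-1}(x)\psi(x)}{|x-y|^\mu}\,dx\,dy,
\]
which is precisely the identity defining $f(x,w)=\hat{u}^{-\gamma}-(w+\hat{u})^{-\gamma}$ and the weak formulation of $(\hat{P}_\la)$. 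Local integrability $f(\cdot,w)\in L^1_{\mathrm{loc}}(\Om)$ follows from Definition \ref{d2.3}(ii) applied to $v=w+\hat{u}$ and from Theorem \ref{ART}(ii) applied to $\hat{u}$. Extension to test functions in $X_0\cap L^\infty(\Om)$ with compact support (so that $w$ really is a weak solution) follows along the lines of Lemmas \ref{l2.5}--\ref{l2.7}. Finally, Lemma \ref{lib} applied to the nonnegative solution $w$ of $(\hat{P}_\la)$ gives $w\in L^\infty(\Om)$, while the strong minimum principle for $\mathcal{M}$ (or, equivalently, the observation that a zero of $w$ in $\Om$ would contradict the strictly positive right-hand side $\la(\cdots)(w+\hat{u})^{2_\mu^\ast-1}$) upgrades $w\geq 0$ to $w>0$ in $\Om$.
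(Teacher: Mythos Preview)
Your approach differs substantially from the paper's, and the step you yourself flag as ``delicate'' contains a genuine gap. You attempt to show directly that $w=v-\hat u\in X_0$ by testing against $(w-\e)^+$, but your justification that $(w-\e)^+\in X_0$ is circular. The chain-rule argument you used for $(\hat u-\e)^+$ relied on the lower bound $\hat u>\e$ on the relevant set; for $(w-\e)^+$ the set $\{w>\e\}$ only guarantees $v>\e$, not $\hat u>\e$, so $\na\hat u=\nu^{-1}\hat u^{1-\nu}\na\hat u^\nu$ is uncontrolled there when $\gamma\geq3$. And Corollary~\ref{c3.3} presupposes that its input $u$ already lies in $X_0$, which is precisely what you are trying to establish. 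Without knowing $(w-\e)^+\in X_0$ you cannot legitimately test either equation against it, and the promised uniform energy estimate never gets started. Nor can you invoke the boundary profile $v\in\mc G(\Om)$ to force $\{w>\e\}\subset\subset\Om$, since that profile is proved in Lemma~\ref{l3.6} \emph{after} the present lemma.

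The paper avoids this difficulty by an indirect construction. With $v$ fixed, it freezes the Choquard term as $g(x,v)=\bigl(\int_\Om|x-y|^{-\mu}v^{2_\mu^*}(y)\,dy\bigr)v^{2_\mu^*-1}$ and considers the auxiliary linear-in-$u$ problem $\mc Mu+f(x,u)=g(x,v)$ of~\eqref{eq3.7}. Minimizing the associated functional $J_{K'}$ over the cone $K'=\{u\in X_0:u\geq0\}$ and applying Proposition~\ref{p4.2} yields a nonnegative solution $u\in X_0$; membership in $X_0$ thus comes for free from the variational framework. Corollary~\ref{c3.3} then gives $(u+\hat u-\e)^+\in X_0$, and both $u+\hat u$ and $v$ satisfy, for every compactly supported $0\leq\psi\in X_0\cap L^\infty(\Om)$, the identical relation
\[
\ld\,\cdot\,,\psi\rd-\I\Om(\,\cdot\,)^{-\gamma}\psi\,dx-\la\I\Om\I\Om\frac{v^{2_\mu^*}v^{2_\mu^*-1}\psi}{|x-y|^\mu}\,dx\,dy=0.
\]
Applying the comparison principle of Lemma~\ref{lcp} in both directions (each side is simultaneously a sub- and supersolution, and both $(u+\hat u-\e)^+$ and $(v-\e)^+$ lie in $X_0$) forces $v=u+\hat u$, whence $v-\hat u=u\in X_0$ is a positive weak solution of $(\hat P_\la)$, and Lemma~\ref{lib} gives $u\in L^\infty(\mb R^n)$. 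In short, the paper never tries to prove $v-\hat u\in X_0$ by cancellation of boundary singularities; it manufactures an $X_0$ object first and then identifies it with $v-\hat u$ via comparison.
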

\begin{proof}
	Consider problem \eqref{eq3.7} with $v$ given. Then $0$ is a strict subsolution to \eqref{eq3.7}. Define the functional $J : X_0 \ra(-\infty,\infty]$ by
	\begin{equation*}
		J(u)= \begin{cases}
			\ds \frac12\|u\|^2+\I\Om F(x,u)dx -\frac{\la}{2 2_\mu^\ast}\I\Om\I\Om \frac{v^{2_\mu^\ast} v^{2_\mu^\ast-1}u}{|x-y|^\mu}dxdy~&\text{if}~F(\cdot,u)\in L^1(\Om),\\
			\infty~&\text{otherwise}.
		\end{cases}
	\end{equation*}
Define $K^{\prime} =\{u \in X_0: u\geq 0\}$, a closed convex set  and define
 \begin{equation*}
 	J_{K^{\prime}}(u)= \begin{cases}
 		J(u)~&\text{if}~u \in K^{\prime}~\text{and}~F(\cdot,u)\in L^1(\Om),\\
 		\infty~&\text{otherwise}.
 	\end{cases}
 \end{equation*}
We can easily show that there exists $u \in K^{\prime}$ such that $J_{K^{\prime}}(u) =\inf _{w\in K^\prime}J_{ K^{\prime}}(w)$. This implies that $0 \in \partial^- J_{K^{\prime}}(u)$. From Proposition \ref{p4.2}, we obtain that $u$ is a nonnegative solution to \eqref{eq3.7}. Using Corollary \ref{c3.3}, Lemma \ref{l2.5} and Lemma \ref{l2.7}, we obtain that $(u+\hat{u}-\e)^+ \in X_0$ for every $\e >0$ and 
\begin{equation*}
	\ld u+\hat{u}, \psi \rd -\I\Om (u+\hat{u})^{-\gamma}\psi dx -\la\I\Om\I\Om \frac{v^{2_\mu^\ast} v^{2_\mu^\ast-1}\psi}{|x-y|^\mu}dxdy=0,
\end{equation*}
\begin{equation*}
	\ld v, \psi \rd -\I\Om v^{-\gamma}\psi dx -\la\I\Om\I\Om \frac{v^{2_\mu^\ast} v^{2_\mu^\ast-1}\psi}{|x-y|^\mu}dxdy=0
\end{equation*}
for $0\leq\psi \in X_0 \cap L^\infty(\Om)$ with compact support in $\Om$. Now using Lemma \ref{lcp}, we get $v =u+\hat{u}$, which implies that $u =v-\hat{u}$ is a positive weak solution of $(\hat{P}_\la)$. Finally, by Lemma \ref{lib}, we have $u \in L^\infty({\mb R^n})$. \QED
	\end{proof}
\begin{Lemma}\label{l3.6}
	Let $\mu < \min\{4,n\}$. Let $u$ be any weak solution to $(P_\la)$. Then $u \in L^\infty(\Om)\cap C^{0,\alpha}_{loc}(\Om)\cap \mc G(\Om)$, for some $\al \in (0,1)$. If $\gamma<3$ then $u\in X_0\cap C^{1,\beta}_{loc}(\Om)$ for some $\beta\in (0,1)$.
\end{Lemma}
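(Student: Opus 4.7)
The plan is to decompose every weak solution of $(P_\la)$ as $u = v + \hat u$ using Lemma \ref{l3.5}, where $v \in X_0 \cap L^\infty(\Om)$ is a nonnegative weak solution of $(\hat P_\la)$ and $\hat u$ is the minimal solution of the purely singular problem $(P_0)$ described by Theorem \ref{ART}. In particular $u \in L^\infty(\Om)$ is immediate, since both $v$ and $\hat u$ are bounded.

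For the interior Hölder regularity, Definition \ref{d2.3}(ii) guarantees that $u$ is bounded below by a positive constant on every compact $K \subset \Om$, hence $u^{-\gamma} \in L^\infty(K)$. Since $u \in L^\infty(\Om)$ with $\Om$ bounded and $\mu < n$, splitting $|x-y|^{-\mu}$ into its near- and far-diagonal parts yields a uniform $L^\infty$-bound on the Choquard potential $\int_\Om |u|^{2_\mu^\ast}(y)|x-y|^{-\mu}dy$. Therefore $\mc M u \in L^\infty_{\text{loc}}(\Om)$, and the interior Hölder estimate of De Filippis-Mingione \cite{FM} for mixed operators produces $u \in C^{0,\alpha}_{\text{loc}}(\Om)$ for some $\alpha \in (0,1)$.

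For the asymptotic $u \in \mc G(\Om)$, the lower bound $u \geq \hat u$ follows from $v \geq 0$ combined with $\hat u \in \mc G(\Om)$ in Theorem \ref{ART}(iii). For the matching upper bound I would apply the comparison principle Lemma \ref{lcp} with test-solution $u$ (the required hypothesis $(u-\e)^+ \in X_0$ being provided by the Remark after Definition \ref{d2.3}) and candidate supersolution $w := C\hat u$. Introducing the functional $\mc H \in X_0^*$ defined by
\begin{equation*}
(\mc H,\psi) := \la \I\Om\I\Om \frac{u^{2_\mu^\ast}(y)\, u^{2_\mu^\ast-1}(x)\, \psi(x)}{|x-y|^\mu}\,dx\,dy,
\end{equation*}
so that $u$ satisfies $\ld u,\psi\rd = \int_\Om u^{-\gamma}\psi + (\mc H,\psi)$, the identity $\ld C\hat u,\psi\rd = C^{\gamma+1}\int_\Om (C\hat u)^{-\gamma}\psi$ reduces the supersolution inequality for $w$ to
\begin{equation*}
(C^{\gamma+1}-1)\I\Om (C\hat u)^{-\gamma}\psi \geq (\mc H,\psi) \qquad \text{for every nonnegative } \psi.
\end{equation*}
Using $(C\hat u)^{-\gamma} \geq (C\|\hat u\|_\infty)^{-\gamma}$ pointwise and $(\mc H,\psi) \leq K \int_\Om \psi$ for some $K=K(\la,\|u\|_\infty,n,\mu,|\Om|)$, this holds once $C$ is taken large enough. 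Lemma \ref{lcp} then yields $u \leq C\hat u$ throughout $\Om$, and combining with the lower bound and Theorem \ref{ART}(iii) gives $u \in \mc G(\Om)$.

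Finally, when $\gamma < 3$, Theorem \ref{ART}(iii) with $\nu = 1$ provides $\hat u \in H^1_0(\Om) \subset X_0$, and since $v \in X_0$ we conclude $u = v + \hat u \in X_0$. For the $C^{1,\beta}_{\text{loc}}$ estimate I would bootstrap from the Hölder regularity: with $u \in C^{0,\alpha}_{\text{loc}}(\Om)$ and locally bounded below, the singular term $u^{-\gamma}$ lies in $C^{0,\alpha}_{\text{loc}}(\Om)$; moreover $|u|^{2_\mu^\ast} \in L^\infty(\Om)$ with support in $\overline\Om$ together with $\mu < n$ yields Hölder continuity of the Choquard potential on $\Om$ by classical Riesz potential estimates. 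Hence $\mc M u \in C^{0,\alpha'}_{\text{loc}}(\Om)$, and the interior Schauder-type regularity for $\mc M$ (see \cite{FM,BDVV1}) delivers $u \in C^{1,\beta}_{\text{loc}}(\Om)$ for some $\beta \in (0,1)$. The main technical obstacle is the calibration of the constant $C$ in the supersolution construction, which couples the singular scaling factor $C^{\gamma+1}-1$ to the size of the critical Choquard perturbation expressed through $\|u\|_\infty$.
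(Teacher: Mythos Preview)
Your argument is correct and follows the paper's proof closely for the $L^\infty$ bound, the interior H\"older regularity (the paper cites \cite{GL} rather than \cite{FM} here, but either works), and the $X_0$ membership when $\gamma<3$. The one genuine difference is the barrier used for the upper bound in $\mc G(\Om)$.

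The paper introduces an auxiliary solution $\tilde u$ of
\[
\mc M\tilde u=\tilde u^{-\gamma}+\la d,\qquad d=D^*|u|_\infty^{2\cdot 2_\mu^*-1},\quad D^*=\Big|\int_\Om|x-y|^{-\mu}\,dy\Big|_\infty,
\]
applies Lemma~\ref{lcp} to get $\hat u\le u\le\tilde u$, and then reads off $u\in\mc G(\Om)$ from the boundary behaviour of $\tilde u$ (which requires knowing that the perturbed singular problem still produces solutions in $\mc G(\Om)$). You instead take $w=C\hat u$ and use the scaling identity $\ld C\hat u,\psi\rd=C^{\gamma+1}\!\int_\Om(C\hat u)^{-\gamma}\psi$ together with $(C\hat u)^{-\gamma}\ge(C\|\hat u\|_\infty)^{-\gamma}$; since $(C^{\gamma+1}-1)C^{-\gamma}\sim C\to\infty$, the supersolution inequality is met for $C$ large. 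This is more elementary: it avoids constructing and analysing the auxiliary problem and works directly with $\hat u$, whose membership in $\mc G(\Om)$ is already provided by Theorem~\ref{ART}. The paper's route, on the other hand, is more robust if one wanted sharper constants or if the nonlinearity were not exactly a power. For the $C^{1,\beta}_{\mathrm{loc}}$ step the paper simply invokes \cite[Theorem~1]{FM} with the locally bounded right-hand side, whereas you first bootstrap to a locally H\"older right-hand side; both lead to the same conclusion.
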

\begin{proof}
	Let $u$ be any weak solution of problem $(P_\la)$. Using Lemma \ref{l3.5}, we have $u-\hat{u} \in X_0$ is a solution of $(\hat{P}_\la)$. Again using Lemma \ref{lib}, we have $ u-\hat{u} \in L^\infty(\mb R^n)$. Therefore, $u=(u-\hat{u})+\hat{u} \in L^\infty(\mb R^n)$. Now let $\tilde{u}$ be a solution to the following problem:
	\begin{align*}
		\mc M \tilde{u} =\tilde{u}^{-\gamma} +\la d, ~\tilde{u} >0 \in \Om,~ \tilde{u} =0~\text{in}~\mb R^n\setminus \Om
	\end{align*}
where $d =D^* |u|_\infty^{22_\mu^*-1}$ with $D^* =\left|\ds\I\Om \frac{dy}{|x-y|^\mu} \right|_\infty$. Using Lemma \ref{lcp}, we observe that $ \hat{u} \leq u\leq \tilde{u}$ a.e in $\Om$. Finally by using the regularity of $\hat{u}$ and $\tilde{u}$, we conclude that $u \in \mc G(\Om)$. Next we show $C^{0,\al}_{\text{loc}}$ regularity of $u$, for some $\al \in (0,1)$. For this, noting that since $0<\mu<n$ and $\Om$ is bounded, we have
\begin{align*}
	\left|\I\Om\frac{|u(y)|^{2_\mu^\ast}}{|x-y|^\mu}dy\right|\leq& |u|_\infty^{2_\mu^\ast}\left[\I{\Om \cap \{|x-y|<1\}}\frac{dy}{|x-y|^\mu}+\I{\Om \cap \{|x-y|\geq1\}}\frac{dy}{|x-y|^\mu}\right]\\
	\leq& |u|_\infty^{2_\mu^\ast}\left[\I{\Om \cap \{r<1\}}r^{n-1-\mu}+|\Om|\right] <\infty.
\end{align*} 
Hence the right hand side of $(P_\la)$ is in $L_{\text{loc}}^\infty(\Om)$. Then by \cite[Theorem 1.4]{GL}, we see that $u \in C^{0,\al}_{\text{loc}}(\Om)$ for some $\al \in (0,1)$. If $\gamma<3$, then $\hat{u}\in X_0$ and then $u\in X_0$. Furthermore, using Theorem 1 in \cite{FM}, local H\"older regularity of $\nabla u$ follows.\QED
\end{proof}
We complete this section by giving \\
\textbf{Proof of Remark \ref{remark1.2}:} To prove part (1), we shall make use of \cite[Theorem 1]{FM} i.e. we will show that the right hand side of $(P_\la)$ is in $L^n(\Om)$. Since $u \in L^\infty(\Om)$, we only need to show $u^{-\gamma} \in L^n(\Om)$. Indeed by using the boundary behaviour of $u$ and the restriction $0<\gamma<\frac{1}{n}$, we readily see that $u^{-\gamma} \in L^n(\Om)$. Hence by \cite[Theorem 1]{FM}, we obtain $u \in C^{0,\al}(\mb R^n)$ for all $\al \in (0,1)$.  Now using \cite[Proposition 2.5]{Si}, we see that $(-\De)^s u \in C^{0,\al-2s}(\Om)$ for $2s <\al<1$. Finally by using the elliptic regularity theory, we get $u \in C^{2,\al-2s}_{\text{loc}}(\Om)$. \\
 For part (2), by taking into consideration the boundary behaviour of $u$, we see that $u^{-\gamma} \in L^p(\Om)$ iff $p \in (1,\frac{1}{\gamma})$ and so the right hand side of $(P_\la)$ is in $L^p(\Om)$ iff $p \in (1,\frac{1}{\gamma})$. Finally, we conclude that $u \in W^{2,p}(\Om)$ in view of \cite[Theorem 1.4]{SVWZ}. \QED
\section{Existence, nonexistence and multiplicity of weak solutions } \label{s4}
	\subsection{First Solution}
	In this subsection, we prove the existence of a weak solution which actually comes out to be a local minimizer of an appropriate functional. We start this subsection by giving the variational framework to problem $(\hat{P}_\la)$ in the space $X_0$.  We define the functional $\Phi:X_0 \ra (-\infty,\infty]$ associated with $(\hat{P}_\la)$ by
\begin{equation*}
	\Phi(u)= \begin{cases}
		\ds \frac12\|u\|^2+\I\Om F(x,u)dx -\frac{\la}{2 2_\mu^\ast}\I\Om\I\Om \frac{|u+\hat{u}|^{2_\mu^\ast} |u+\hat{u}|^{2_\mu^\ast}}{|x-y|^\mu}dxdy~&\text{if}~F(\cdot,u)\in L^1(\Om),\\
		\infty~&\text{otherwise}.
	\end{cases}
\end{equation*}	
	Next for any closed convex subset $K \subset X_0$, we define the functional $\Phi_K :X_0 \ra (-\infty,\infty]$ by
	 \begin{equation*}
		\Phi_{K}(u)= \begin{cases}
			\Phi(u)~&\text{if}~u \in K~\text{and}~F(\cdot,u)\in L^1(\Om),\\
			\infty~&\text{otherwise}.
		\end{cases}
	\end{equation*}
We note that $u \in D(\Phi_K)$ iff $u \in K$ and $F(\cdot,u) \in L^1(\Om)$. Our next lemma characterizes the set $\partial^-\Phi_K(u)$. 
\begin{Lemma}\label{l4.1} 
	Let $K\subset X_0$ be a convex set and let $\vartheta \in X_0$. Let also $u \in K$ with $F(\cdot,u) \in L^1(\Om)$. Then the following assertions are equivalent:
	\begin{enumerate}[label=(\roman*)]
		\item $\vartheta \in \partial^-\Phi_K(u)$.
		\item For every $\varphi \in K$ with $F(\cdot,\varphi) \in L^1(\Om)$, we have $f(\cdot,u)(\varphi-u) \in L^1(\Om)$ and
		\begin{equation*}
		\ld \vartheta,\varphi -u\rd \leq 	\ld u,\varphi-u\rd +\I\Om f(x,u) (\varphi-u)dx - \la \I\Om\I\Om\frac{(u+\hat{u})^{2_\mu^\ast}(u+\hat{u})^{2_\mu^\ast-1}(\varphi-u)}{|x-y|^\mu}dxdy.
		\end{equation*}
	\end{enumerate}
\end{Lemma}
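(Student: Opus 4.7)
The plan is to apply the additive decomposition of the Fr\'echet subdifferential for ``convex $+\;C^1$'' functionals from the Remark following Definition~2.11. Write $\Phi_K = I_0 + I_1$ with
\begin{equation*}
I_0(u) = \tfrac12\|u\|^2 + \I\Om F(x,u)\,dx + \chi_K(u),\qquad
I_1(u) = -\tfrac{\la}{2\cdot 2_\mu^\ast}\I\Om\I\Om \frac{|u+\hat u|^{2_\mu^\ast}|u+\hat u|^{2_\mu^\ast}}{|x-y|^\mu}\,dx\,dy,
\end{equation*}
where $\chi_K$ is the indicator function of $K$. Since $\tau\mapsto F(x,\tau)$ is convex (its derivative $f(x,\cdot)$ is nondecreasing) and nonnegative, $I_0$ is proper, convex and lower semicontinuous; the Choquard term $I_1$ is of class $C^1$ on $X_0$ by Proposition~\ref{hls}, with differential
\begin{equation*}
\ld \na I_1(u),\varphi \rd = -\la \I\Om\I\Om \frac{(u+\hat u)^{2_\mu^\ast}(y)(u+\hat u)^{2_\mu^\ast-1}(x)\varphi(x)}{|x-y|^\mu}\,dx\,dy.
\end{equation*}
By the cited Remark, $\vartheta\in\pa^- \Phi_K(u)$ is equivalent to $\eta := \vartheta - \na I_1(u) \in \pa I_0(u)$ in the sense of convex subdifferentials.

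For the proper convex lsc functional $I_0$, standard convex analysis yields the directional-derivative characterization: $\eta\in\pa I_0(u)$ iff $\ld\eta,\varphi-u\rd\leq I_0'(u;\varphi-u)$ for every $\varphi\in D(I_0)$, where the one-sided derivative $I_0'(u;w) := \lim_{t\downarrow 0}t^{-1}(I_0(u+tw)-I_0(u))$ exists in $[-\infty,+\infty]$ by monotonicity of convex difference quotients. For fixed $\varphi\in K$ with $F(\cdot,\varphi)\in L^1(\Om)$, set $\varphi_t := (1-t)u+t\varphi$; convexity of $K$ gives $\varphi_t\in K$, and convexity of $F(x,\cdot)$ gives the pointwise bound $F(\cdot,\varphi_t)\leq (1-t)F(\cdot,u)+tF(\cdot,\varphi)\in L^1(\Om)$, so $\varphi_t\in D(I_0)$. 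The quadratic contribution to $I_0'(u;\varphi-u)$ is plainly $\ld u,\varphi-u\rd$; for the $F$-integral, the pointwise difference quotients $t^{-1}(F(x,\varphi_t)-F(x,u))$ are nondecreasing in $t>0$, decrease to $f(x,u)(\varphi-u)$ as $t\downarrow 0$, and are majorized by the $L^1$-function $F(x,\varphi)-F(x,u)$, so a standard monotone-convergence argument yields
\begin{equation*}
I_0'(u;\varphi-u) = \ld u,\varphi-u\rd + \I\Om f(x,u)(\varphi-u)\,dx \quad \in [-\infty,+\infty].
\end{equation*}

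Putting these together and transposing $\na I_1(u)$ to the right-hand side shows $\eta\in\pa I_0(u)$ is equivalent to the inequality in (ii) for all admissible $\varphi$. For the (i)$\Rightarrow$(ii) direction, finiteness of the inequality's right-hand side forces $\I\Om f(x,u)(\varphi-u)\,dx \in\R$, and combined with the pointwise upper bound $f(x,u)(\varphi-u)\leq F(x,\varphi)-F(x,u)\in L^1(\Om)$ coming from convexity of $F$, both the positive and negative parts of $f(x,u)(\varphi-u)$ are integrable, yielding $f(\cdot,u)(\varphi-u)\in L^1(\Om)$. For the converse (ii)$\Rightarrow$(i), the global convex inequalities $\tfrac12\|\varphi\|^2-\tfrac12\|u\|^2\geq\ld u,\varphi-u\rd$ and $F(x,\varphi)-F(x,u)\geq f(x,u)(\varphi-u)$, inserted into (ii), upgrade it to $I_0(\varphi)-I_0(u)\geq\ld\eta,\varphi-u\rd$ for every $\varphi\in D(I_0)$, which is the defining property of $\eta\in\pa I_0(u)$. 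The main technical point is the monotone-convergence identification of the directional derivative of the $F$-term, which is routine given the convex pointwise bounds recalled above.
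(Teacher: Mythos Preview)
Your proposal is correct and follows essentially the approach the paper has in mind: the paper omits the proof and refers to \cite[Lemma~5.1]{GGS1}, whose argument is precisely the ``convex $+\;C^1$'' decomposition $\Phi_K=I_0+I_1$ together with the Remark following Definition~2.11, and then the directional-derivative computation for the convex part $I_0$ via monotone convergence that you carry out. The only cosmetic point is that you invoke lower semicontinuity of $I_0$ (which would require $K$ closed, as in the paper's definition of $\Phi_K$ though not restated in the lemma); in fact the identity $\partial^-I_0=\partial I_0$ for convex $I_0$ holds without lsc, so this does not affect the argument.
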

\begin{proof}
	The proof is similar to the proof of \cite[Lemma 5.1]{GGS1} and hence omitted.  \QED
	\end{proof}
Now for any functions $v,~w : \Om\ra [-\infty,\infty]$, we define the following convex sets:
\begin{align*}
	K_v=\{u \in X_0 :v \leq u~\text{a.e}\},~	K^w=\{u \in X_0 : u\leq w~\text{a.e}\}~\text{and}~	K_v^w=\{u \in X_0 :v \leq u\leq w~\text{a.e}\}.
\end{align*}	
We state the following proposition which can be thought of as Perron's method for non-smooth functionals.
\begin{Proposition}\label{p4.2}
	Assume that one of the following conditions holds:
	\begin{enumerate}[label=(\roman*)]
		\item $v_1$ is a subsolution to $(\hat{P}_\la)$, $F(x,\varphi(x)) \in L^1_{\text{loc}}(\Om)$ for all $\varphi \in K_{v_1}$, $u \in D(\Phi_{K_{v_1}})$  and \linebreak $0 \in \partial^-\Phi_{K_{v_1}}(u)$. 
		\item $v_2$ is a supersolution of $(\hat{P}_\la)$, $F(x,\varphi(x)) \in L^1_{\text{loc}}(\Om)$ for all $\varphi \in K^{v_2}$, $u \in D(\Phi_{K^{v_2}})$ and \linebreak $0\in \partial^-\Phi_{K^{v_2}}(u)$.
		\item $v_1$ and $v_2$ are subsolution and supersolution of $(\hat{P}_\la)$, $v_1 \leq v_2$, $F(x,v_1(x)),~F(x,v_2(x)) \in L^1_{\text{loc}}(\Om)$, $u \in D(\Phi_{K_{v_1}^{v_2}})$ and $0 \in \partial^-\Phi_{K_{v_1}^{v_2}}(u)$.
	\end{enumerate}
Then $u$ is a weak solution of $(\hat{P}_\la)$.
\end{Proposition}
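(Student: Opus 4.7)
The strategy is the standard sub- and supersolution Perron-type argument adapted to the nonsmooth setting, in the spirit of \cite{HSS,GGS1}. Using Lemma \ref{l4.1}, the hypothesis $0 \in \partial^-\Phi_K(u)$ will be rephrased as the variational inequality
\begin{equation*}
\ld u,\varphi-u\rd + \I\Om f(x,u)(\varphi-u)\,dx - \la \I\Om\I\Om \frac{(u+\hat{u})^{2_\mu^\ast}(u+\hat{u})^{2_\mu^\ast-1}(\varphi-u)}{|x-y|^\mu}\,dx\,dy \geq 0,
\end{equation*}
valid for every admissible $\varphi \in K$ with $F(\cdot,\varphi)\in L^1(\Om)$. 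From this I will deduce that $u$ satisfies both one-sided inequalities in the definition of sub- and supersolution against arbitrary nonnegative $\psi \in C_c^\infty(\Om)$.

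For assertion (i), one direction is essentially free: since $u + t\psi$ remains in $K_{v_1}$ for every $t>0$ and every nonnegative $\psi \in C_c^\infty(\Om)$, I will test with $\varphi = u + t\psi$, divide by $t$, and let $t \ra 0^+$ to obtain the supersolution inequality. For the converse, I plan to introduce the obstacle truncation $\varphi_t := \max\{u - t\psi,\,v_1\} = (u - t\psi) + \eta_t$ with $\eta_t := (v_1 - u + t\psi)^+$, so that $\varphi_t \in K_{v_1}$, $0 \leq \eta_t \leq t\psi$, and $\{\eta_t > 0\}$ collapses onto the contact set $\{u = v_1\}$ as $t \ra 0^+$. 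Plugging $\varphi_t$ splits the inequality into a $-t\psi$ piece yielding the desired subsolution left-hand side and an $\eta_t$ piece; the latter will be absorbed by invoking the subsolution property of $v_1$ tested against the nonnegative admissible function $\eta_t$. After subtracting the two inequalities, the leftover $\eta_t$-terms reduce to integrals of $(u - v_1)$ and of $(u+\hat{u})^{2_\mu^\ast-1} - (v_1+\hat{u})^{2_\mu^\ast-1}$ against $\eta_t$; both integrands vanish on the limiting contact set, so dominated convergence (with dominating function $\psi$ inherited from $\eta_t/t \leq \psi$) forces these terms to be $o(t)$, yielding the subsolution inequality for $u$.

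Case (ii) is entirely symmetric: test with $u - t\psi \in K^{v_2}$ for the subsolution side, and with the obstacle $\min\{u + t\psi, v_2\}$ combined with the supersolution property of $v_2$ for the supersolution side. For case (iii) I will use the double truncation $\min\{\max\{u \pm t\psi, v_1\}, v_2\} \in K_{v_1}^{v_2}$; the excess naturally splits into a $v_1$-piece and a $v_2$-piece, and the vanishing argument above applies to each. The main obstacle is the nonlocal bookkeeping: the Gagliardo bilinear form and the Choquard double integral do not localize under max/min operations, so one cannot simply discard cross terms between $\{\eta_t > 0\}$ and its complement. The key observation that unlocks the argument is that, modulo $o(t)$, all these nonlocal contributions reduce to integrals over the contact set where the two integrands coincide, so they vanish in the limit. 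This technical step mirrors the one carried out in \cite[Proposition 5.2]{GGS1}, which I will follow closely for the detailed estimates.
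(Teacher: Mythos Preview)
Your proposal is correct and follows the standard Perron-type argument that the paper invokes: the paper's own proof is simply a one-line reference to \cite[Proposition 4.2]{GMS}, and the scheme you outline (rewriting $0\in\partial^-\Phi_K(u)$ via Lemma \ref{l4.1}, testing with $u\pm t\psi$ truncated at the obstacles, and killing the remainder $\eta_t$-terms using the sub/supersolution property together with the fact that $\eta_t/t$ collapses onto the contact set) is precisely the argument carried out in \cite{GMS,HSS,GGS1}. The only discrepancy is bibliographic: you point to \cite[Proposition 5.2]{GGS1} for the nonlocal bookkeeping while the paper cites \cite[Proposition 4.2]{GMS}, but the underlying computations are the same.
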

\begin{proof}
	Following the proof of \cite[Proposition 4.2]{GMS}, we have the required result. \QED
	\end{proof}	
	Let $\xi$ be the function which satisfies $\mc M u =\frac12$. From \cite[Theorem 2.7]{BDVV4}, $\xi \in C^{1,\ba}(\bar{\Om})$ for some $\ba \in (0,1)$. For $f$ and $F$, we have the following properties.
	\begin{Lemma}\label{l4.3}
		\begin{enumerate}[label=(\roman*)]
			\item Let $u \in L^1_{\text{loc}}(\Om)$ such that ess $\inf_K u>0$ for any compact set $K \subset \Om$. Then $f(x,u(x)),~F(x,u(x)) \in L^1_{\text{loc}}(\Om)$.
			\item For all $x \in \Om$, the following holds:
			\begin{itemize}
				\item [(a)] $F(x,st) \leq s^2 F(x,t)$ for each $s \geq 1$ and $t \geq 0$.
				\item [(b)] $F(x,s) -F(x,t)-(f(x,s)+f(x,t))(s-t)/2 \geq 0$ for each $s,~t$ with $s \geq t >-\xi(x)$.
				\item [(c)] $F(x,s)-f(x,s)s/2 \geq 0$ for each $s \geq 0$.
			\end{itemize}
		\end{enumerate}
	\end{Lemma}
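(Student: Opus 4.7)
\textbf{Proof sketch for Lemma \ref{l4.3}.}

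For (i), Theorem \ref{ART}(ii) gives $\inf_K \hat u>0$ on each compact $K\subset\Om$, and the hypothesis provides ess $\inf_K u>0$. Hence $u+\hat u\geq \hat u\geq m_K>0$ a.e.\ on $K$, which yields the pointwise bound
\[
0\leq f(x,u(x))=\hat u^{-\gamma}-(u+\hat u)^{-\gamma}\leq \hat u^{-\gamma}\leq m_K^{-\gamma},
\]
so $f(\cdot,u)\in L^\infty_{\text{loc}}(\Om)\subset L^1_{\text{loc}}(\Om)$. Integrating the same bound over $r\in[0,u(x)]$ gives $0\leq F(x,u(x))\leq m_K^{-\gamma}u(x)$ on $K$, and since $u\in L^1_{\text{loc}}(\Om)$, we conclude $F(\cdot,u)\in L^1_{\text{loc}}(\Om)$.

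All three assertions in (ii) rest on the fact that, for each $x\in\Om$, the map $\tau\mapsto f(x,\tau)$ is strictly increasing, strictly concave on $(-\hat u(x),+\infty)$, and vanishes at $\tau=0$: indeed $\pa_\tau f=\gamma(\tau+\hat u)^{-\gamma-1}>0$ and $\pa_\tau^2 f=-\gamma(\gamma+1)(\tau+\hat u)^{-\gamma-2}<0$. I would prove (c) first: concavity of $r\mapsto f(x,r)$ on $[0,s]$ together with $f(x,0)=0$ forces the graph to lie above the chord through the origin, i.e.\ $f(x,r)\geq (r/s)f(x,s)$ for $r\in[0,s]$, and integrating from $0$ to $s$ yields $F(x,s)\geq sf(x,s)/2$.

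Part (a) is then an immediate consequence of (c): for fixed $t>0$ set $\psi(s):=s^{-2}F(x,st)$ for $s\geq 1$; a direct differentiation gives
\[
\psi'(s)=\frac{(st)f(x,st)-2F(x,st)}{s^3},
\]
and applying (c) at $\tau=st$ shows $\psi'(s)\leq 0$. Therefore $\psi(s)\leq\psi(1)=F(x,t)$, which is exactly $F(x,st)\leq s^2F(x,t)$ (the case $t=0$ is trivial since $F(x,0)=0$).

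Finally, for (b), the assumption $s\geq t>-\xi(x)$ ensures that $[t,s]\subset(-\hat u(x),+\infty)$, so that $f(x,\cdot)$ is finite and concave on $[t,s]$. The classical trapezoidal underestimate for concave integrands (the graph lies above every chord) then yields
\[
F(x,s)-F(x,t)=\I t^s f(x,r)\,dr\geq \frac{f(x,s)+f(x,t)}{2}(s-t).
\]
No step in this plan is technically difficult; the key idea is to exploit the concavity of $f(x,\cdot)$ in $\tau$, and to recognize that part (c) automatically packages part (a) along the ray $\tau=st$ through the monotonicity of $s\mapsto s^{-2}F(x,st)$.
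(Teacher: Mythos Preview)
Your proof is essentially complete and correct; the paper itself gives no argument for this lemma, merely citing \cite[Lemma~4]{HSS}, so you are supplying details the paper omits. The use of concavity of $\tau\mapsto f(x,\tau)$ together with $f(x,0)=0$ is exactly the right mechanism, and deriving (a) from (c) via the monotonicity of $s\mapsto s^{-2}F(x,st)$ is clean.

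One point deserves a small repair. In part (b) you assert that $t>-\xi(x)$ ``ensures that $[t,s]\subset(-\hat u(x),+\infty)$'', which would require $\xi(x)\leq \hat u(x)$ for every $x\in\Om$. This comparison is not established anywhere in the paper (indeed $\mc M\xi=\tfrac12$ while $\mc M\hat u=\hat u^{-\gamma}$, and $\hat u^{-\gamma}$ need not dominate $\tfrac12$ in the interior). In the cited source \cite{HSS} the analogous inequality is stated with the singular solution playing the role of the lower barrier, i.e.\ with the condition $t>-\hat u(x)$, which is precisely what is needed for $f(x,\cdot)$ to be finite and concave on $[t,s]$; the only use of (b) later in the paper (Proposition~\ref{p4.10}) has $t=v\geq 0$, so either hypothesis suffices there. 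You should either replace the hypothesis by $t>-\hat u(x)$, or add the short comparison argument showing $\xi\leq \hat u$ if you intend to keep the statement as written; otherwise your trapezoidal estimate for concave integrands is correct and gives (b) immediately.
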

	\begin{proof}
	For a proof we refer to \cite[Lemma 4]{HSS}. 	
		\end{proof}
\begin{Lemma}\label{l4.4}
	The following hold:
	\begin{enumerate}[label=(\roman*)]
		\item $0$ is the strict subsolution to $(\hat{P}_\la)$ for all $\la >0$.
		\item $\xi$ is a strict supersolution to $(\hat{P}_\la)$ for all sufficiently small $\la>0$.
		\item Any positive weak solution $z$ to $(\hat{P}_{\la_2})$ is a strict supersolution to $(\hat{P}_{\la_1})$ for $0<\la_1<\la_2$.
	\end{enumerate}
\end{Lemma}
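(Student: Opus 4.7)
\textbf{Proof plan for Lemma \ref{l4.4}.}

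For part (i), I would simply substitute $u\equiv 0$ into the subsolution inequality. Note that $f(x,0)=\hat u(x)^{-\gamma}-\hat u(x)^{-\gamma}=0$ and $\langle 0,\psi\rangle =0$, so the only surviving term is the Choquard term. Since $\hat u>0$ a.e.~in $\Om$ (by Theorem \ref{ART}) and $\psi\not\equiv 0$ is nonnegative, the double integral $\int_\Om\!\int_\Om \hat u^{2_\mu^*}(y)\hat u^{2_\mu^*-1}(x)\psi(x)|x-y|^{-\mu}dx\,dy$ is strictly positive, giving the strict subsolution inequality. I also need to check the admissibility requirements: $0\in X_0$ is obvious, and $f(\cdot,0)\equiv 0\in L^1_{\text{loc}}(\Om)$.

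For part (ii), the key observation is that $\xi\in C^{1,\beta}(\overline\Om)\subset X_0\cap L^\infty(\Om)$, hence $\langle \xi,\psi\rangle=\int_\Om \tfrac12\psi\,dx$ for all $\psi\in C_c^\infty(\Om)$. Because $\xi\geq 0$, monotonicity of $\tau\mapsto -(\tau+\hat u(x))^{-\gamma}$ gives $f(x,\xi)\geq 0$ pointwise, and $\hat u$ being locally bounded away from $0$ on compact subsets of $\Om$ ensures $f(\cdot,\xi)\in L^1_{\text{loc}}(\Om)$. For the Choquard term, since $\xi+\hat u\in L^\infty(\Om)$ and $\Om$ is bounded with $0<\mu<n$, the potential $\int_\Om (\xi+\hat u)^{2_\mu^*}(y)|x-y|^{-\mu}dy$ is bounded by some constant $C_0$ uniformly in $x\in\Om$ (the argument is identical to the estimate at the end of the proof of Lemma \ref{l3.6}). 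This produces the bound
\begin{equation*}
\la\int_\Om\!\int_\Om \frac{(\xi+\hat u)^{2_\mu^*}(y)(\xi+\hat u)^{2_\mu^*-1}(x)\psi(x)}{|x-y|^\mu}dx\,dy \leq \la\, C_0 |\xi+\hat u|_\infty^{2_\mu^*-1}\int_\Om\psi\,dx.
\end{equation*}
Choosing $\La_0>0$ so that $\La_0 C_0|\xi+\hat u|_\infty^{2_\mu^*-1}<\tfrac14$, for every $\la\in(0,\La_0)$ the supersolution left-hand side is bounded below by $\tfrac14\int_\Om\psi\,dx+\int_\Om f(x,\xi)\psi\,dx>0$, which is the required strict inequality.

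For part (iii), I would subtract the two supersolution-type inequalities. Let $z\in X_0$ be a weak solution of $(\hat P_{\la_2})$; by Lemma \ref{l2.7} (with $\psi\in C_c^\infty(\Om)$, $\psi\geq 0$),
\begin{equation*}
\langle z,\psi\rangle +\int_\Om f(x,z)\psi\,dx-\la_2\int_\Om\!\int_\Om\frac{(z+\hat u)^{2_\mu^*}(z+\hat u)^{2_\mu^*-1}\psi}{|x-y|^\mu}dx\,dy=0.
\end{equation*}
Subtracting from the analogous expression with $\la_1$ in place of $\la_2$ gives
\begin{equation*}
\langle z,\psi\rangle+\int_\Om f(x,z)\psi\,dx-\la_1\int_\Om\!\int_\Om\frac{(z+\hat u)^{2_\mu^*}(z+\hat u)^{2_\mu^*-1}\psi}{|x-y|^\mu}dx\,dy = (\la_2-\la_1)\int_\Om\!\int_\Om\frac{(z+\hat u)^{2_\mu^*}(z+\hat u)^{2_\mu^*-1}\psi}{|x-y|^\mu}dx\,dy,
\end{equation*}
and the right-hand side is strictly positive because $\la_2>\la_1$, $z+\hat u>0$ a.e.~in $\Om$ and $\psi\not\equiv 0$.

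I don't expect any real obstacle here: (i) and (iii) are essentially immediate, and (ii) is a direct smallness-of-$\la$ computation. The only care needed is to verify the admissibility conditions (membership of $f(\cdot,u)$ in $L^1_{\text{loc}}(\Om)$, $u\in X_0$), which hold for $u=0$ trivially, for $\xi$ via its $C^{1,\beta}$-regularity, and for $z$ because $z$ is a weak solution of $(\hat P_{\la_2})$ and the regularity results of Lemmas \ref{lib} and \ref{l3.6} apply to $u=z+\hat u$.
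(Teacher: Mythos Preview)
Your proposal is correct and follows essentially the same approach as the paper's proof: in each part you substitute the candidate function, use $f(x,0)=0$ for (i), the positivity of $f(x,\xi)$ together with $\ld\xi,\psi\rd=\int_\Om\tfrac12\psi$ and the $L^\infty$-bound on the Choquard potential for (ii), and the weak-solution identity for $z$ combined with $(\la_2-\la_1)>0$ for (iii). If anything, you are slightly more careful than the paper in checking the admissibility conditions $u\in X_0$ and $f(\cdot,u)\in L^1_{\text{loc}}(\Om)$.
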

\begin{proof}
	\begin{enumerate}[label=(\roman*)]
		\item Let $\psi \in X_0\setminus\{0\}$, $\psi\geq 0$. Since $f(x,0)=0$, we get
		\begin{equation*}
			\ld 0,\psi\rd +\I\Om f(x,0)\psi -\la\I\Om\I\Om\frac{(0+\hat{u})^{2_\mu^*}(0+\hat{u})^{2_\mu^*-1}\psi}{|x-y|^\mu}dxdy<0.
		\end{equation*}
	\item Choose $\la$ small enough such that $\ds \la \left(\I\Om \frac{(\xi +\hat{u})^{2_\mu^*}}{|x-y|^\mu}dy\right)(\xi+\hat{u})^{2_\mu^*-1} <1$ in $\Om$. From Lemma \ref{l4.3}, $f(x,\xi)$, $F(x,\xi) \in L^1_{\text{loc}}(\Om)$, for all $\psi \in X_0 \setminus\{0\}$, we deduce that
	\begin{align*}
		\ld \xi,\psi\rd +\I\Om f(x,\xi)\psi dx-\la \I\Om\I\Om \frac{(\xi+\hat{u})^{2_\mu^\ast}(\xi+\hat{u})^{2_\mu^\ast-1}\psi}{|x-y|^\mu}dxdy\\
		\geq \I\Om \left(1- \la\left(\I\Om \frac{(\xi +\hat{u})^{2_\mu^*}}{|x-y|^\mu}dy\right)(\xi+\hat{u})^{2_\mu^*-1}\right)\psi dx >0.
	\end{align*}
\item Let $0 <\la_1 <\la_2$ and $z$ be a weak positive weak solution to $(\hat{P}_{\la_2})$. Then for all $\psi \in X_0\setminus\{0\}$, we have 
\begin{align*}
	\ld z,\psi\rd +\I\Om f(x,z)\psi -\la_1 \I\Om\I\Om \frac{(z+\hat{u})^{2_\mu^\ast}(z+\hat{u})^{2_\mu^\ast-1}\psi}{|x-y|^\mu}dxdy
\end{align*}
\begin{align*}
	=(\la_2-\la_1)\I\Om\I\Om \frac{(z+\hat{u})^{2_\mu^\ast}(z+\hat{u})^{2_\mu^\ast-1}\psi}{|x-y|^\mu}dxdy >0.
\end{align*}
	\end{enumerate}
This completes the proof. \QED
	\end{proof}
Let $\La:= \sup\{\la>0:(\hat{P}_\la)~\text{admits a solution}\}$.
\begin{Remark}
	If $\La >0$, by Lemma \ref{l4.4}, we deduce that for any $\la \in (0,\La)$, $(\hat{P}_\la)$ has a subsolution (the trivial function $0$) and a positive strict supersolution (say $z$).
\end{Remark}
\begin{Theorem}\label{t4.6}
	Let $v_1,~v_2 :\mb R^n \ra [-\infty,+\infty]$ with $v_1 \leq v_2$ such that $v_2$ is a strict supersolution to $(\hat{P}_\la)$ and $u \in D(\Phi_{K_{v_1}^{v_2}})$ be a minimizer for $\Phi_{K_{v_1}^{v_2}}$. Then $u$ is a local minimizer of $\Phi_{K_{v_1}}$.
	\end{Theorem}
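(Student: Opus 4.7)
The plan is a proof by contradiction in the spirit of the Brezis--Nirenberg truncation technique adapted to nonsmooth singular functionals (cf.\ \cite{GMS,GGS1}). Suppose $u$ is \emph{not} a local minimizer of $\Phi_{K_{v_1}}$: then there exists a sequence $\{w_n\}\subset K_{v_1}$ with $w_n\to u$ in $X_0$ and $\Phi(w_n)<\Phi(u)$. Decompose
\[
w_n=\tilde{w}_n+\eta_n,\qquad \tilde{w}_n:=\min\{w_n,v_2\}\in K_{v_1}^{v_2},\quad \eta_n:=(w_n-v_2)^+\ge 0.
\]
Since $u\le v_2$ a.e.\ and the positive-part truncation is continuous on $X_0$, $\eta_n\to 0$ in $X_0$. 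The minimality of $u$ on $K_{v_1}^{v_2}$ gives $\Phi(\tilde{w}_n)\ge\Phi(u)>\Phi(w_n)$, so
\[
\Phi(w_n)-\Phi(\tilde{w}_n)<0 \qquad (\star).
\]
The goal is to contradict $(\star)$ by bounding the same difference from below using the supersolution property of $v_2$.

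I estimate term by term. For the quadratic part, $\tfrac{1}{2}\|w_n\|^2-\tfrac{1}{2}\|\tilde{w}_n\|^2=\ld \tilde{w}_n,\eta_n\rd+\tfrac{1}{2}\|\eta_n\|^2$. Writing $\ld \tilde{w}_n,\eta_n\rd=\ld v_2,\eta_n\rd-\ld (v_2-w_n)^+,\eta_n\rd$ and using the disjoint support of $(v_2-w_n)^+$ and $\eta_n$, the local piece of the subtracted inner product vanishes (Stampacchia) and the nonlocal piece is nonpositive by a direct four-region splitting of the double integral against the kernel $|x-y|^{-n-2s}$; thus $\ld \tilde{w}_n,\eta_n\rd\ge\ld v_2,\eta_n\rd$. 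Convexity of $F(x,\cdot)$ (since $\pa_\tau f(x,\tau)=\gamma(\tau+\hat{u})^{-\gamma-1}>0$) combined with the identity $\tilde{w}_n=v_2$ on $\{\eta_n>0\}$ yields $\I\Om (F(x,w_n)-F(x,\tilde{w}_n))dx\ge \I\Om f(x,v_2)\eta_n\,dx$. For the Choquard term I use the integral form
\[
Q(w_n+\hat{u})-Q(\tilde{w}_n+\hat{u})=2\cdot 2_\mu^\ast\I0^1\I\Om\I\Om\frac{(\tilde{w}_n+\hat{u}+t\eta_n)^{2_\mu^\ast}(x)(\tilde{w}_n+\hat{u}+t\eta_n)^{2_\mu^\ast-1}(y)\eta_n(y)}{|x-y|^\mu}dx\,dy\,dt,
\]
with $Q(u):=\I\Om\I\Om \frac{|u|^{2_\mu^\ast}(x)|u|^{2_\mu^\ast}(y)}{|x-y|^\mu}dxdy$. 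Using $\tilde{w}_n\le v_2$ pointwise, $\tilde{w}_n=v_2$ on $\text{supp}\,\eta_n$, the elementary bound $(a+b)^p\le C_p(a^p+b^p)$, and Hardy--Littlewood--Sobolev (Proposition \ref{hls}) to absorb the resulting corrections through $\eta_n\to 0$ in $L^{2^\ast}$, this upper-bound reduces to
\[
Q(w_n+\hat{u})-Q(\tilde{w}_n+\hat{u})\le 2\cdot 2_\mu^\ast\I\Om\I\Om\frac{(v_2+\hat{u})^{2_\mu^\ast}(x)(v_2+\hat{u})^{2_\mu^\ast-1}(y)\eta_n(y)}{|x-y|^\mu}dx\,dy+o(\|\eta_n\|^2).
\]

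Collecting the three pieces gives
\[
\Phi(w_n)-\Phi(\tilde{w}_n)\ge \mc B(\eta_n)+\tfrac{1}{2}\|\eta_n\|^2+o(\|\eta_n\|^2),
\]
where $\mc B(\eta_n):=\ld v_2,\eta_n\rd+\I\Om f(x,v_2)\eta_n\,dx-\la\I\Om\I\Om \frac{(v_2+\hat{u})^{2_\mu^\ast}(v_2+\hat{u})^{2_\mu^\ast-1}\eta_n}{|x-y|^\mu}dxdy$. Extending the supersolution inequality from $C_c^\infty(\Om)$ to $0\le\eta_n\in X_0$ by the density argument of Lemma \ref{l2.7} gives $\mc B(\eta_n)\ge 0$. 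Hence $\Phi(w_n)-\Phi(\tilde{w}_n)\ge \tfrac{1}{2}\|\eta_n\|^2+o(\|\eta_n\|^2)\ge 0$ for $n$ large (if $\eta_n\equiv 0$ eventually, then $w_n=\tilde{w}_n\in K_{v_1}^{v_2}$ already contradicts $(\star)$), which directly contradicts $(\star)$.

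The main obstacle is the Choquard piece: because $Q$ is \emph{convex} (second variation nonnegative), the standard convex inequality $Q(A+B)-Q(A)\ge Q'(A)[B]$ runs in the \emph{wrong} direction for producing a lower bound on $\Phi(w_n)-\Phi(\tilde{w}_n)$. The workaround is to bypass convexity entirely and extract an \emph{upper} bound on $Q(w_n+\hat{u})-Q(\tilde{w}_n+\hat{u})$ directly from the integral representation, crucially exploiting the pointwise inequality $\tilde{w}_n\le v_2$ at the $x$-variable and the identity $\tilde{w}_n=v_2$ on $\text{supp}\,\eta_n$ at the $y$-variable; the remaining $O(\eta_n)$ correction terms are then tamed uniformly in $n$ by Hardy--Littlewood--Sobolev together with the $L^{2^\ast}$-smallness of $\eta_n$, which is what ensures the remainder is genuinely $o(\|\eta_n\|^2)$ and therefore dominated by $\tfrac{1}{2}\|\eta_n\|^2$.
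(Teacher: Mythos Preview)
Your overall strategy --- set $\tilde w_n=\min\{w_n,v_2\}$, $\eta_n=(w_n-v_2)^+$, use $\Phi(\tilde w_n)\ge \Phi(u)$, and reduce to a lower bound for $\Phi(w_n)-\Phi(\tilde w_n)$ via the supersolution $v_2$ --- is exactly the route taken in the paper. The treatment of the quadratic and $F$-parts is fine. The gap is in the Choquard estimate.

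Your claim that the remainder in
\[
Q(w_n+\hat u)-Q(\tilde w_n+\hat u)\ \le\ 2\cdot 2_\mu^\ast\I\Om\I\Om\frac{(v_2+\hat u)^{2_\mu^\ast}(v_2+\hat u)^{2_\mu^\ast-1}\eta_n}{|x-y|^\mu}\,dx\,dy\ +\ o(\|\eta_n\|^2)
\]
is $o(\|\eta_n\|^2)$ is not justified; it is only $O(\|\eta_n\|^2)$. Indeed, bounding $(\tilde w_n+\hat u+t\eta_n)$ by $(v_2+\hat u+t\eta_n)$ and Taylor-expanding, the second-order term is controlled via Hardy--Littlewood--Sobolev and H\"older by
\[
C\big(\|v_2+\hat u\|_{2^\ast}^{2(2_\mu^\ast-1)}+\|v_2+\hat u\|_{2^\ast}^{2\cdot 2_\mu^\ast-2}\big)\,\|\eta_n\|_{2^\ast}^2\ \le\ C'\,\|\eta_n\|^2,
\]
where $C'$ depends on the \emph{fixed} quantity $\|v_2+\hat u\|_{2^\ast}$ and on the Sobolev/HLS constants; there is no mechanism forcing $C'\to 0$. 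The ``$L^{2^\ast}$-smallness of $\eta_n$'' gives $\|\eta_n\|_{2^\ast}\to 0$, but this is already accounted for in the factor $\|\eta_n\|^2$ and does \emph{not} make the ratio $\|\eta_n\|_{2^\ast}^2/\|\eta_n\|^2$ small. Consequently the inequality you reach is only
\[
\Phi(w_n)-\Phi(\tilde w_n)\ \ge\ \mc B(\eta_n)+\tfrac12\|\eta_n\|^2-C'\|\eta_n\|^2,
\]
which yields no contradiction when $C'\ge \tfrac12$. A clear diagnostic is that your argument never uses the \emph{strictness} of the supersolution $v_2$ (you only use $\mc B(\eta_n)\ge 0$), whereas strictness is part of the hypothesis and is essential.

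The paper closes this gap as follows. Writing $\mc D$ for the Choquard discrepancy, one bounds it not by a bare $O(\|\eta_n\|^2)$ but by quantities involving the auxiliary weights
\[
\hat m^1_{w_n}\sim\big(\hat u^{\,2_\mu^\ast-2}+\max\{|v_2|,|j|\}^{2_\mu^\ast-2}\big)\chi_{\{w_n>v_2\}},\quad
\hat m^2_{w_n}\sim\big(\hat u^{\,2_\mu^\ast-1}+\max\{|v_2|,|j|\}^{2_\mu^\ast-1}\big)\chi_{\{w_n>v_2\}},
\]
with $j$ a fixed $L^{2^\ast}$ majorant of all $|w_n|$. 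One then splits $\{\hat m^i\le R\}\cup\{\hat m^i>R\}$: the tail $\{\hat m^i>R\}$ is small in the relevant $L^p$ norm (uniformly in $n$) for $R$ large, and is absorbed into $\tfrac14\|\eta_n\|^2$; on $\{\hat m^i\le R\}$ one is left with terms controlled by the \emph{subcritical} norm $|\eta_n|_{2\cdot 2^\ast/2_\mu^\ast}$ (here $\mu<4$ is used). The contradiction is then obtained by a dichotomy: either $|\eta_n|_{2\cdot 2^\ast/2_\mu^\ast}^2\le \tfrac{1}{LC^\ast}\|\eta_n\|^2$ for $L$ large (and the remaining terms are absorbed), or one normalizes $\eta_n$ in $L^{2\cdot 2^\ast/2_\mu^\ast}$ and uses weak lower semicontinuity together with the \emph{strict} supersolution property of $v_2$ to get $\inf \mc J>0$ on the admissible set, contradicting $\mc J(\eta_n/|\eta_n|)\to 0$. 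In short, the missing ingredients in your argument are precisely this splitting and the use of strictness.
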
	
\begin{proof}
	For each $w \in K_{v_1}$ and $0 \leq \varphi \in X_0$, we define $\eta(w)=\min\{w,v_2\}= w-(w-v_2)^+$ and 
	\begin{equation*}
		\mc J (\varphi)= \ld v_2,\varphi\rd +\I\Om f(x,v_2)\varphi dx -\la\I\Om\I\Om \frac{(v_2+\hat{u})^{2_\mu^\ast}(v_2+\hat{u})^{2_\mu^\ast-1}\varphi}{|x-y|^\mu}dxdy.
	\end{equation*}
We first claim that
\begin{align*}
	\ld \eta (w), w-\eta (w)\rd \geq \ld v_2,w-\eta(w)\rd~\text{and}
\end{align*}
\begin{align*}
	\I\Om\I\Om\frac{\left((\eta(w)+\hat{u})^{2_\mu^*}(\eta(w)+\hat{u})^{2_\mu^*-1}-(v_2+\hat{u})^{2_\mu^*}(v_2+\hat{u})^{2_\mu^*-1}\right)(w-\eta(w))}{|x-y|^\mu}dxdy \leq 0.
\end{align*}
Let $\Om^{\prime}=\text{supp}((w-v_2)^+)$. Then on $\Om^{\prime}$, $\eta(w)=v_2$ and using the fact that $\eta(w)\leq v_2$ on $\Om$, we easily deduce that	
	\begin{equation*}
		\ld \eta(w),w-\eta(w)\rd \geq \ld v_2,w-\eta(w)\rd.
	\end{equation*}
Also the second inequality hold using the fact that $\eta(w)\leq v_2$ on $\Om$. This proves the claim. Deploying the fact that $u$ is a minimizer for $\Phi_{K_{v_1}^{v_2}}$, $\eta(w) \in D(\Phi_{K_{v_1}^{v_2}})$, \cite[Lemma 2]{HSS} and using that $F(x,\cdot)$ is convex, we have
	\begin{align}\nonumber\label{e4.2}
	\Phi_{K_{v_1}}(w)&-\Phi_{K_{v_1}}(u) \geq \Phi_{K_{v_1}}(w)-\Phi_{K_{v_1}}(\eta(w))\\ \nonumber
	=&\frac{\|w-\eta(w)\|^2}{2}+\ld \eta(w),w-\eta(w)\rd+\I\Om (F(x,w)-F(x,\eta(w)))\\ \nonumber
	&-\frac{\la}{22_\mu^*}	\I\Om\I\Om\frac{\left((w+\hat{u})^{2_\mu^*}(w+\hat{u})^{2_\mu^*}-(\eta(w)+\hat{u})^{2_\mu^*}(\eta(w)+\hat{u})^{2_\mu^*}\right)}{|x-y|^\mu}dxdy\\ \nonumber
		\geq&\frac{\|w-\eta(w)\|^2}{2}+\ld \eta(w),w-\eta(w)\rd+\I\Om f(x,\eta(w))(w-\eta(w))\\ \nonumber
	&-\frac{\la}{22_\mu^*}	\I\Om\I\Om\frac{\left((w+\hat{u})^{2_\mu^*}(w+\hat{u})^{2_\mu^*}-(\eta(w)+\hat{u})^{2_\mu^*}(\eta(w)+\hat{u})^{2_\mu^*}\right)}{|x-y|^\mu}dxdy\\ \nonumber
		\geq&\frac{\|w-\eta(w)\|^2}{2}+\ld v_2,w-\eta(w)\rd+\I\Om f(x,v_2)(w-\eta(w))\\ \nonumber
	&-\frac{\la}{22_\mu^*}	\I\Om\I\Om\frac{\left((w+\hat{u})^{2_\mu^*}(w+\hat{u})^{2_\mu^*}-(\eta(w)+\hat{u})^{2_\mu^*}(\eta(w)+\hat{u})^{2_\mu^*}\right)}{|x-y|^\mu}dxdy\\
	\geq &\frac{\|w-\eta(w)\|^2}{2} +\mc J(w -\eta(w))-\frac{\la}{22_\mu^*} \mc D,
\end{align}
where
\begin{align*}
	\mc D =&\I\Om\I\Om \frac{(w+\hat{u})^{2_\mu^*}(w+\hat{u})^{2_\mu^*}}{|x-y|^\mu}dxdy -\I\Om\I\Om \frac{(\eta(w)+\hat{u})^{2_\mu^*}(\eta(w)+\hat{u})^{2_\mu^*}}{|x-y|^\mu}dxdy \\
	&-22_\mu^*\I\Om\I\Om \frac{(\eta(w)+\hat{u})^{2_\mu^*}(\eta(w)+\hat{u})^{2_\mu^*-1}(w-\eta(w))}{|x-y|^\mu}dxdy.
\end{align*}
	Next we estimate $\mc G$ from above. For this, we first note that
	\begin{align}\nonumber\label{e4.3}
		\mc D =& 2_\mu^* \I\Om \I{\eta(w)}^w\left(\I\Om \frac{(w+\hat{u})^{2_\mu^*}+(\eta(w)+\hat{u})^{2_\mu^*}}{|x-y|^\mu}dy\right)\left((t+\hat{u})^{2_\mu^*-1}-(\eta(w)+\hat{u})^{2_\mu^*-1}\right)dtdx\\
		&+2_\mu^* \I\Om \I{\eta(w)}^w\left(\I\Om \frac{(w+\hat{u})^{2_\mu^*}-(\eta(w)+\hat{u})^{2_\mu^*}}{|x-y|^\mu}dy\right)(\eta(w)+\hat{u})^{2_\mu^*-1}dtdx.
	\end{align}
Using the mean value theorem, there exists $\theta \in [0,1]$ such that
\begin{align*}
	\frac{(u+\hat{u})^{2_\mu^*-1}-(w+\hat{u})^{2_\mu^\ast-1}}{u-w}=&(2_\mu^*-1)(u+\hat{u}+\theta(w-u))^{2_\mu^*-2}
	=(2_\mu^*-1)(\hat{u}+(1-\theta)u+\theta w))^{2_\mu^*-2}	\\
	\leq &(2_\mu^*-1)2^{2_\mu^*-3}\left(\hat{u}^{2_\mu^*-2}+((1-\theta)u+\theta w)^{2_\mu^*-2}\right)\\
	\leq& (2_\mu^*-1)2^{2_\mu^*-3}\left(\hat{u}^{2_\mu^*-2}+\max\{u,w\}^{2_\mu^*-2}\right).
\end{align*}
For each $x \in \Om$ and $w \in D(\Phi_{K_{v_2}})$ define the functions
\begin{align*}
	m_w^1(x)=(2_\mu^*-1)2^{2_\mu^*-3}\left(\hat{u}^{2_\mu^*-2}+\max\{|v_2|,|w|\}^{2_\mu^*-2}\right)\chi_{\{w>v_2\}},\\
	\text{and}~m_w^2(x)=2_\mu^*2^{2_\mu^*-2}\left(\hat{u}^{2_\mu^*-1}+\max\{|v_2|,|w|\}^{2_\mu^*-1}\right)\chi_{\{w>v_2\}}.
\end{align*}
Now employing Hardy-Littlewood-Sobolev inequality, we have
\begin{align}\nonumber\label{e4.4}
	\I\Om \I{\eta(w)}^w&\left(\I\Om \frac{(w+\hat{u})^{2_\mu^*}+(\eta(w)+\hat{u})^{2_\mu^*}}{|x-y|^\mu}dy\right)\left((t+\hat{u})^{2_\mu^*-1}-(\eta(w)+\hat{u})^{2_\mu^*-1}\right)dtdx\\ \nonumber
	& \leq \frac12 \I\Om\I\Om \frac{\left((w+\hat{u})^{2_\mu^*}+(\eta(w)+\hat{u})^{2_\mu^*}\right)m_w^1(x)(w-\eta(w))^2}{|x-y|^\mu}dydx\\
	&\leq c_1 \left(|w+\hat{u}|_{2^\ast}^{2_\mu^\ast}+|\eta(w)+\hat{u}|_{2^\ast}^{2_\mu^\ast}\right)|m_w^1(x)(w-\eta(w))^2|_\frac{2^*}{2_\mu^*}
\end{align}
for some appropriate positive constant $c_1$. Similarly with the help of Hardy-Littlewood-Sobolev inequality, H\"{o}lder's inequality, and the definition of $S$ we have
\begin{align}\nonumber\label{e4.5}
	\I\Om \I{\eta(w)}^w\left(\I\Om \frac{(w+\hat{u})^{2_\mu^*}-(\eta(w)+\hat{u})^{2_\mu^*}}{|x-y|^\mu}dy\right)(\eta(w)+\hat{u})^{2_\mu^*-1}dtdx\\ 
	\leq c_2S^\frac{-1}{2}|m_w^2(x)(w-\eta(w))|_\frac{2^*}{2_\mu^*}|\eta(w)+\hat{u}|_{2^*}^{2_\mu^*-1}\|w-\eta(w)\|
\end{align}
for some appropriate positive constant $c_2$. Substituting \eqref{e4.4} and \eqref{e4.5} in \eqref{e4.3}, we obtain
\begin{align}\nonumber\label{e4.6}
	\mc D \leq & c_1 \left(|w+\hat{u}|_{2^\ast}^{2_\mu^\ast}+|\eta(w)+\hat{u}|_{2^\ast}^{2_\mu^\ast}\right)|m_w^1(x)(w-\eta(w))^2|_\frac{2^*}{2_\mu^*} \\
	&+c_2S^\frac{-1}{2}|m_w^2(x)(w-\eta(w))|_\frac{2^*}{2_\mu^*}|\eta(w)+\hat{u}|_{2^*}^{2_\mu^*-1}\|w-\eta(w)\|.
	\end{align}
Suppose on the contrary that the result does not hold. Then there exists a sequence $\{w_k\}_{k \in \mb N}\subset X_0$ such that  $w_k \in K_{w_1}$
and 
\begin{equation*}
	\|w_k-u\| <\frac{1}{2^k},~\Phi_{K_{v_1}}(w_k) <\Phi_{K_{v_1}}(u)~\text{for all}~k\in\mb N.
\end{equation*}
Next we set $ j := u+ \sum_{k=1}^{\infty}|w_k -u|$. Then, clearly $w_k$ satisfies $|w_k| \leq j$ a.e for all $k$. Now for each $ w \in D(\Phi_{K_{v_1}})$, set 
\begin{align*}
	\hat{m}_{w}^1= (2_\mu^*-1)2^{2_\mu^*-3}\left(\hat{u}^{2_\mu^*-2}+\max\{|v_2|,|j|\}^{2_\mu^*-2}\right)\chi_{\{w>v_2\}},\\
	\text{and}~\hat{m}_w^2(x)=2_\mu^*2^{2_\mu^*-2}\left(\hat{u}^{2_\mu^*-1}+\max\{|v_2|,|j|\}^{2_\mu^*-1}\right)\chi_{\{w>v_2\}}.
\end{align*}
Using \eqref{e4.2} and \eqref{e4.6}, we obtain
\begin{align}\nonumber\nonumber\label{e4.7}
	0 >& \Phi_{K_{v_1}}(w_k)-\Phi_{K_{v_1}}(u)\\ \nonumber
	\geq & \Phi_{K_{v_1}}(w_k)-\Phi_{K_{v_1}}(\eta(w_k))\\\nonumber
	\geq & \frac{\|w_k -\eta(w_k)\|^2}{2}-\la\left( c_1 \left(|w_k+\hat{u}|_{2^\ast}^{2_\mu^\ast}+|\eta(w_k)+\hat{u}|_{2^\ast}^{2_\mu^\ast}\right)|\hat{m}_{w_k}^1(x)(w_k-\eta(w_k))^2|_\frac{2^*}{2_\mu^*}\right. \\ \nonumber
	&\left.+c_2S^\frac{-1}{2}|\hat{m}_{w_k}^2(x)(w_k-\eta(w_k))|_\frac{2^*}{2_\mu^*}|\eta(w_k)+\hat{u}|_{2^*}^{2_\mu^*-1}\|w_k-\eta(w_k)\|\right)+\mc J(w_k -\eta(w_k))\\ \nonumber
	\geq & \frac{\|w_k -\eta(w_k)\|^2}{2}+\mc J(w_k -\eta(w_k))-\left( \frac{C_1}{4}|\hat{m}_{w_k}^1(x)(w_k -\eta(w_k))^2|_\frac{2^*}{2_\mu^*}\right.\\
	&+ \left.\frac{C_2}{4}|\hat{m}_{w_k}^2(x)(w_k-\eta(w_k))|_\frac{2^*}{2_\mu^*}\|w_k- \eta(w_k)\|\right),
\end{align}
where $C_1 =\sup_k 4\la c_1 \left(|w_k+\hat{u}|_{2^\ast}^{2_\mu^\ast}+|\eta(w_k)+\hat{u}|_{2^\ast}^{2_\mu^\ast}\right)$ and $C_2 =\sup_k 4\la c_2 S^\frac{-1}{2} |\eta(w_k)+\hat{u}|_{2^*}^{2_\mu^*-1}$. Consider
\begin{align*}
	|\hat{m}_{w_k}^1(x)(w_k -\eta(w_k))^2|_\frac{2^*}{2_\mu^*} \leq &	|\hat{m}_{w_k}^1(x)|_\frac{2^*}{2_\mu^*-2}|(w_k -\eta(w_k))^2|_\frac{22^*}{2_\mu^*}^2\\
	= & \left(\left(\I{\{\hat{m}_{w_k}^1\leq R_1\}}|\hat{m}_{w_k}^1(x)|^\frac{2^*}{2_\mu^*-2}\right)^\frac{2_\mu^*-2}{2^*}+   \left(\I{\{\hat{m}_{w_k}^1> R_1\}}|\hat{m}_{w_k}^1(x)|^\frac{2^*}{2_\mu^*-2}\right)^\frac{2_\mu^*-2}{2^*}\right)\\
	&|(w_k -\eta(w_k))^2|_\frac{22^*}{2_\mu^*}^2.
\end{align*}
Choose $R_1$, $R_2>0$ such that, for all $k$,
\begin{align*}
	C_1S^{-1} \left(\I{\{\hat{m}_{w_k}^1> R_1\}}|\hat{m}_{w_k}^1(x)|^\frac{2^*}{2_\mu^*-2}\right)^\frac{2_\mu^*-2}{2^*} <\frac12,~
	C_2S^\frac{-1}{2}\left(\I{\{\hat{m}_{w_k}^2> R_2\}}|\hat{m}_{w_k}^1(x)|^\frac{2^*}{2_\mu^*-1}\right)^\frac{2_\mu^*-1}{2^*} <\frac12.
\end{align*}
Then, using H\"{o}lder's inequality in \eqref{e4.7} with above estimates, we get
\begin{align*}
	0>& \frac{\|w_k -\eta(w_k)\|^2}{4}+\mc J(w_k -\eta(w_k))-\left( \frac{C_1 R_1}{4}|(w_k-\eta(w_k))^2|^2_\frac{22^*}{2_\mu^*}\right.\\
	&+ \left.\frac{C_2R_2}{4}|(w_k-\eta(w_k))|_\frac{22^*}{2_\mu^*}\|w_k- \eta(w_k)\|\right)\\
	\geq& \frac{\|(w_k -v_2)^+\|^2}{4}+\mc J((w_k -v_2)^+)-\left( \frac{C_1 R_1}{4}|(w_k -v_2)^+|^2_\frac{22^*}{2_\mu^*}\right.\\
	&+ \left.\frac{C_2R_2}{4}|(w_k -v_2)^+|_\frac{22^*}{2_\mu^*}\|w_k- \eta(w_k)\|\right).
\end{align*}
Let $C^* =\max\{\frac{C_1R_1}{2}, \frac{C_2R_2}{2}\}$. Then
\begin{align}\nonumber \label{e4.8}
	0>&\frac{\|(w_k -v_2)^+\|^2}{4}+\mc J((w_k -v_2)^+)\\
	&-\frac{C^*}{2}\left(|(w_k -v_2)^+|^2_\frac{22^*}{2_\mu^*}
	+ |(w_k -v_2)^+|_\frac{22^*}{2_\mu^*}\|(w_k-v_2)^+\|\right).
\end{align}
 Now if $|(w_k-v_2)^+|_\frac{22^*}{2}^2 \leq \frac{1}{L C^*} \|(w_k-v_2)^+\|^2$, then from \eqref{e4.8} we have
\begin{align*}
	0 >\|(w_k-v_2)^+\|^2 \left(\frac{1}{4}-\frac{1}{2L}-\frac{\sqrt{C^*}}{2\sqrt{L}}\right),
\end{align*}
which is contradiction for $L$ large. Now consider the case when  $|(w_k-v_2)^+|_\frac{22^*}{2}^2 \geq \frac{1}{L C^*} \|(w_k-v_2)^+\|^2$.  In this case,
let $\kappa =\inf \{ \mc J (\varphi ): \varphi \in \mc A\}$, where $\mc A =\{ \varphi \in X_0: \varphi \geq 0,|\varphi|_\frac{22^*}{2_\mu^*}=1,~\|\varphi\| \leq  {\sqrt{LC^*}}\}$. Clearly, $\mc A$ is a weakly sequentially closed subset of $X_0$. Also using Fatou's Lemma and the fact that the Riesz potential is a bounded linear functional, one can easily prove that $\mc J$ is a weakly lower semicontinuous on $\mc A$. So, if $\{w_k\}_{k \in \mb N} \subset \mc A$ is a minimizing sequence for $\kappa$ such that $w_k \rightharpoonup w$ as $ k \ra \infty$, then
\begin{align*}
	\mc J(w) \leq \liminf \mc J(w_k).
\end{align*} 
Since $v_2$ is a supersolution of $(\hat{P}_\la)$, $\mc J(w)>0$ for all $w  \in \mc A$. This implies $\kappa >0$. Now notice using the definition of $\kappa$, \eqref{e4.8} can be rewritten as the following:
\begin{align}\nonumber\label{0.2}
	0>&\kappa -\frac{C^*}{2}\left(|(w_k-v_2)^+|_\frac{22^*}{2_\mu^*}+\|(w_k-v_2)^+\|\right)\\
	\geq & \kappa -\frac{C^*}{2} \left( 1+\sqrt{LC^*}\right)|(w_k-v_2)^+|_\frac{22^*}{2_\mu^*}.
\end{align}
As $\{w_k\}_{k \in \mb N}$ is a sequence such that $w_k \ra u$ in $X_0$, it implies that  $|(w_k-v_2)^+|_\frac{22^*}{2_\mu^*} \ra 0$ as $ k \ra \infty$. So from \eqref{0.2}, we get a contradiction to the fact that $\kappa >0$. This completes the proof. \QED
\end{proof}	
The existence of weak solutions to $(P_\lambda)$ follows from the next lemma together with lemma \ref{exi}.
\begin{Lemma}\label{l4.7}
	We have $\La >0$.
\end{Lemma}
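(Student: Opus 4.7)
The strategy is to invoke Perron's method in the form of Proposition \ref{p4.2}(iii), using $v_1 = 0$ and $v_2 = \xi$ as the subsolution/supersolution pair. By Lemma \ref{l4.4}(i)--(ii), both requirements are met simultaneously for any sufficiently small $\lambda > 0$: $0$ is always a strict subsolution of $(\hat P_\la)$, and $\xi$ is a strict supersolution provided $\la$ is small enough (so that $\la(\int_\Om (\xi+\hat u)^{2_\mu^*}/|x-y|^\mu\,dy)(\xi+\hat u)^{2_\mu^*-1} < 1$ in $\Om$). Fix such a $\lambda$. It then suffices to produce a minimizer $u \in D(\Phi_{K_0^\xi})$ of $\Phi_{K_0^\xi}$, because the condition $0 \in \partial^-\Phi_{K_0^\xi}(u)$ holds at any such minimizer, and Proposition \ref{p4.2}(iii) then upgrades $u$ to a genuine weak solution of $(\hat P_\la)$, so that $\La \geq \la > 0$.

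To produce the minimizer, first note that $K_0^\xi = \{u \in X_0 : 0 \leq u \leq \xi\text{ a.e.}\}$ is convex and strongly (hence weakly) closed in $X_0$, and nonempty (it contains $0$). Since $\xi \in C^{1,\beta}(\overline\Om) \subset L^\infty(\Om)$, every $u \in K_0^\xi$ satisfies $0 \leq u \leq |\xi|_\infty$ a.e., so by the monotonicity of $F(x,\cdot)$ on $[0,\infty)$ we have $0 \leq F(x,u) \leq F(x,\xi)$; by Lemma \ref{l4.3}(i) applied to $\xi$ (whose essential infimum on any compact $K \subset \Om$ is positive, since $\xi$ solves $\mathcal Mu = 1/2$), we get $F(\cdot,\xi) \in L^1_{\mathrm{loc}}(\Om)$, and in fact on all of $\Om$ after the $L^\infty$ truncation argument. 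Combining with the Hardy--Littlewood--Sobolev inequality (Proposition \ref{hls}), the Choquard term is bounded on $K_0^\xi$ by a constant depending only on $|\xi+\hat u|_{2^*}$. Hence
\[
\Phi_{K_0^\xi}(u) \;\geq\; \tfrac12 \|u\|^2 \;-\; C,
\]
which shows both that $\Phi_{K_0^\xi}$ is bounded below and that any minimizing sequence $\{u_k\}_{k\in\mb N}$ is bounded in $X_0$.

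Extract a subsequence with $u_k \rightharpoonup u$ weakly in $X_0$; by weak closedness of $K_0^\xi$, $u \in K_0^\xi$. The norm term $\tfrac12\|u\|^2$ is weakly lower semicontinuous on $X_0$. For the remaining terms, the uniform bound $0 \leq u_k \leq \xi$ together with Rellich compactness gives $u_k \to u$ strongly in $L^p(\Om)$ for every $p < 2^*$ and pointwise a.e.\ (up to a subsequence); combined with the $L^\infty$ envelope and the dominated convergence theorem,
\[
\I\Om F(x,u_k)\,dx \to \I\Om F(x,u)\,dx, \qquad
\I\Om\I\Om \frac{(u_k+\hat u)^{2_\mu^*}(u_k+\hat u)^{2_\mu^*}}{|x-y|^\mu}\,dx\,dy \to \I\Om\I\Om\frac{(u+\hat u)^{2_\mu^*}(u+\hat u)^{2_\mu^*}}{|x-y|^\mu}\,dx\,dy,
\]
the second using the HLS inequality to control the potential term. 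Therefore $\Phi_{K_0^\xi}(u) \leq \liminf_k \Phi_{K_0^\xi}(u_k) = \inf_{K_0^\xi} \Phi$, so $u$ is the required minimizer and the proof is complete.

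The only real subtlety is controlling $F(\cdot,u)$ and its integral along the minimizing sequence: because $F$ blows up where $\tau + \hat u(x) \to 0^+$, one must exploit $u \geq 0$ (so that $u + \hat u \geq \hat u > 0$ on compacta) together with the pointwise bound $F(x,u_k) \leq F(x,\xi)$ to invoke dominated convergence. Once this integrability is secured, the remaining pieces are standard weak convergence arguments.
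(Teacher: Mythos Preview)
Your proof is correct and follows essentially the same approach as the paper's: both fix $\la$ small so that $0$ and $\xi$ form a sub/supersolution pair (Lemma~\ref{l4.4}), minimize $\Phi$ over $K_0^\xi$ by showing coercivity and weak lower semicontinuity via dominated convergence, and then invoke Proposition~\ref{p4.2}(iii). The only imprecise step is the phrase ``in fact on all of $\Om$ after the $L^\infty$ truncation argument'' for $F(\cdot,\xi)\in L^1(\Om)$; the clean justification is the pointwise bound $0\le F(x,\xi)\le \hat u^{-\gamma}\xi$ together with the boundary behavior of $\hat u$ and $\xi\sim\delta$ (alternatively, Fatou's lemma suffices for the $F$ term in the lower semicontinuity).
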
	
\begin{proof}
	We will use the sub- and supersolution method to prove the required result. From Lemma \ref{l4.4}, we get that $0$ and $\xi$ are the sub- and supersolution, respectively, to $(\hat{P}_\la)$ for $\la$ small enough. We define the closed convex subset of $X_0$ as $K =\{\varphi \in X_0 : 0\leq \varphi \leq \xi\}$. Using the definition fo $K$, we can easily prove that
	\begin{equation*}
		\Phi_K(u) \geq \frac{\|u\|^2}{2}-c_1-c_2
	\end{equation*}
for appropriate positive constants $c_1$ and $c_2$. This imply that $\Phi_K$ is coercive on $K$. Next, we claim that $\Phi_K$ is weakly lower semicontinuous on $K$. Indeed, let $\{\varphi_k\}_{k \in \mb N} \subset K$ such that $\varphi_k \rightharpoonup \varphi$ weakly in $X_0$ as $k \ra \infty$. For each $k$, we have
\begin{align*}
	\I\Om F(x,\varphi_k) dx \leq \I\Om F(x,\varphi) <+\infty,
\end{align*}
\begin{align*}
	\I\Om\I\Om \frac{(\varphi_k+\hat{u})^{2_\mu^*}(\varphi_k+\hat{u})^{2_\mu^*}}{|x-y|^\mu}dxdy \leq 	\I\Om\I\Om \frac{(\xi+\hat{u})^{2_\mu^*}(\xi+\hat{u})^{2_\mu^*}}{|x-y|^\mu}dxdy<+\infty.
\end{align*}
Thus by the dominated convergence theorem and the weak lower semicontonuity of the norm, we deduce that $\Phi_K$ is weakly lower semicontinuous on $K$. Thus, there exists $u \in X_0$ such that
\begin{align*}
	\inf_{\varphi \in K} \Phi_K(\varphi)= \Phi_K(u).
\end{align*}
Since $0 \in \partial^- \Phi_K(u)$, $u$ is a weak solution to $(\hat{P}_\la)$. It implies that $\La>0$. \QED
\end{proof}	
\begin{Theorem}\label{t4.8}
	Let $ \la \in (0,\La)$. Then there exists a positive weak solution $u_\la$ to $(\hat{P}_\la)$ belonging to $X_0$ such that $\Phi(u_\la)<0$ and $u_\la$ is a local minimizer of $\Phi_{K_0}$.
	\end{Theorem}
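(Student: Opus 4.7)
The strategy is to construct $u_\la$ as a constrained minimizer of $\Phi$ between the strict subsolution $0$ and a strict supersolution coming from the definition of $\La$, and then to upgrade it to a local minimizer of $\Phi_{K_0}$ via Theorem \ref{t4.6}. Fix $\la \in (0,\La)$. Since $\La > 0$ by Lemma \ref{l4.7}, we may choose $\la' \in (\la,\La)$ and, by the definition of $\La$, a weak solution $z$ to $(\hat{P}_{\la'})$; Lemma \ref{l4.4}(iii) then guarantees that $z$ is a strict supersolution to $(\hat{P}_\la)$, while Lemma \ref{l4.4}(i) asserts that $0$ is a strict subsolution.

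Consider the closed convex set $K_0^z = \{u \in X_0 : 0 \leq u \leq z \text{ a.e.\ in } \Om\}$. We show $\Phi_{K_0^z}$ attains its infimum on $X_0$. For coercivity on $K_0^z$: since $F \geq 0$, since $0 \leq u+\hat{u} \leq z + \hat{u}$ with $\hat{u}, z \in L^\infty(\Om)$, and since $\int_\Om\int_\Om \frac{(z+\hat{u})^{2_\mu^\ast}(z+\hat{u})^{2_\mu^\ast}}{|x-y|^\mu}dxdy < \infty$ by Hardy--Littlewood--Sobolev, the Choquard term is uniformly bounded on $K_0^z$, whence $\Phi(u) \geq \tfrac{1}{2}\|u\|^2 - C$. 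Weak lower semicontinuity follows from the wlsc of $\|\cdot\|^2$, from convexity and continuity of $F(x,\cdot)$ on $\{\tau + \hat{u} > 0\}$ (which contains $K_0^z$), and from pointwise convergence combined with the domination $u_k+\hat{u}\le z+\hat{u}$ in the Choquard double integral along weakly convergent sequences in $K_0^z$. Thus there exists $u_\la \in K_0^z$ with $\Phi(u_\la) = \inf_{K_0^z}\Phi$, and by construction $0 \in \partial^-\Phi_{K_0^z}(u_\la)$.

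By Proposition \ref{p4.2}(iii), applied with $v_1 = 0$ and $v_2 = z$, $u_\la$ is a weak solution to $(\hat{P}_\la)$. For the energy estimate, evaluate
\[
\Phi(0) = -\frac{\la}{2\cdot 2_\mu^\ast}\int_\Om\int_\Om \frac{\hat{u}^{2_\mu^\ast}(x)\,\hat{u}^{2_\mu^\ast}(y)}{|x-y|^\mu}\,dxdy < 0,
\]
since $\hat{u} > 0$ a.e.\ in $\Om$. As $0 \in K_0^z$, this yields $\Phi(u_\la) \leq \Phi(0) < 0$. In particular $u_\la \not\equiv 0$: otherwise $0$ would be a weak solution of $(\hat{P}_\la)$, contradicting the strict inequality in Lemma \ref{l4.4}(i). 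Positivity $u_\la > 0$ a.e.\ then follows from Lemma \ref{exi} combined with the comparison principle in Lemma \ref{lcp}, since $u_\la + \hat{u}$ solves $(P_\la)$ and must strictly dominate $\hat{u}$. Finally, Theorem \ref{t4.6} applied with $v_1 = 0$ and the strict supersolution $v_2 = z$ shows that $u_\la$ is a local minimizer of $\Phi_{K_0}$.

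The most delicate point is the weak lower semicontinuity and the $L^1$-integrability of the singular term $\int_\Om F(x,u_k)dx$ along a minimizing sequence. Convexity of $F(x,\cdot)$ handles wlsc in principle, but one must guarantee $F(\cdot,u_k)\in L^1(\Om)$; this is precisely where the ceiling $u_k\le z$ is essential, since it keeps $u_k+\hat{u}$ bounded away from the singular locus $\{\tau+\hat{u}=0\}$ and yields $F(\cdot,u_k)\le F(\cdot,z)\in L^1_{\text{loc}}(\Om)$ via Lemma \ref{l4.3}(i). Verification of the hypotheses of Proposition \ref{p4.2}(iii), which require $F(\cdot,0),F(\cdot,z)\in L^1_{\text{loc}}(\Om)$, is then routine.
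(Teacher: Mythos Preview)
Your proof is correct and follows essentially the same approach as the paper: choose $\la' \in (\la,\La)$, use a weak solution $z$ of $(\hat{P}_{\la'})$ as a strict supersolution via Lemma \ref{l4.4}(iii), minimize $\Phi$ on $K_0^z$, observe $\Phi(0)<0$, and invoke Theorem \ref{t4.6}. Your treatment is in fact more detailed than the paper's, which simply refers back to the coercivity/wlsc argument of Lemma \ref{l4.7}; your explicit justification that $z\in L^\infty(\Om)$ (from Lemma \ref{lib}) to bound the Choquard term is a useful addition. One small remark: your positivity argument via Lemma \ref{lcp} only yields $u_\la\ge 0$, not strict positivity, since that comparison principle gives $\hat u\le u_\la+\hat u$ rather than a strict inequality; however the paper itself does not argue strict positivity in its proof either, so this is not a gap relative to the original.
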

\begin{proof}
	Let $\la \in (0,\La)$ and $\la_1 \in (\la,\La)$. Then by Lemma \ref{l4.4}, $0$ and $u_{\la_1}$ are strict subsolution and supersolution of $(\hat{P}_\la)$ respectively. The existence of $u_{\la_1}$ is clear by definition of $\La$. Now consider the convex set $K= \{ u \in X_0: 0\leq u \leq u_{\la_1}\}$. Then following the analysis carried out in Lemma \ref{l4.7}, we obtain $u_\la \in X_0$ such that $\inf_{\varphi \in K}\Phi_K(\varphi) =\Phi_K(u_\la)$. Since $0 \in K$ and $\Phi_K(0)<0$, we conclude that $\Phi_K(u_\la)<0$. Let $ v_1 =0$ and $v_2 =u_{\la_1}$ in Theorem \ref{t4.6}, we have $u_\la$ is a local minimizer of $\Phi_{K_0}$. \QED
	\end{proof}	
\begin{Lemma}
	$\La < \infty$.
\end{Lemma}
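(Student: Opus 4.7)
My approach is an eigenvalue-type contradiction: I will test $(P_\la)$ against the principal Dirichlet eigenfunction $\phi_1$ of $\mc M$ and combine the resulting identity with a pointwise weighted AM-GM inequality to bound $\la$ a priori. By the standard variational theory of $\mc M$ on $X_0$ (minimisation of the Rayleigh quotient $\|u\|^2/|u|_2^2$, cf.\ \cite{BDVV1}), there is a principal eigenpair $(\la_1,\phi_1)$ with $\la_1>0$, $\phi_1\in X_0\cap L^\infty(\Om)$, $\phi_1>0$ in $\Om$, and $\ld\phi_1,v\rd=\la_1\int_\Om\phi_1 v\,dx$ for every $v\in X_0$. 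Set $\beta:=(1+\gamma)/(2_\mu^\ast-1+\gamma)\in(0,1)$. Weighted AM-GM with weights $(1-\beta,\beta)$ then yields, for all $t,A>0$,
$$t^{-\gamma}+A\,t^{2_\mu^\ast-1}\;\geq\;C_\beta\,A^{\beta}\,t,$$
the exponent of $t$ on the right being $-\gamma(1-\beta)+(2_\mu^\ast-1)\beta=1$ by the choice of $\beta$, and $C_\beta=(1-\beta)^{-(1-\beta)}\beta^{-\beta}>0$.

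Let $u$ be any weak solution of $(P_\la)$. By Lemma \ref{l3.6}, $u\geq\hat u$ a.e.\ in $\Om$, hence
$$\Phi_u(x):=\int_\Om\frac{u(y)^{2_\mu^\ast}}{|x-y|^\mu}\,dy\;\geq\;\int_\Om\frac{\hat u(y)^{2_\mu^\ast}}{|x-y|^\mu}\,dy\;\geq\;c_0>0\qquad\text{for all }x\in\Om,$$
where $c_0$ is independent of $u$ and $\la$: fix any compact $K\subset\Om$ with $m_K:=\inf_K\hat u>0$ and use $|x-y|\leq d_\Om$ to take $c_0:=m_K^{2_\mu^\ast}d_\Om^{-\mu}|K|$. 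To use $\phi_1$ as a test function (which is not compactly supported), approximate by $\phi_1^k\in X_0\cap L^\infty$ with compact support, $0\leq\phi_1^k\nearrow\phi_1$ and $\phi_1^k\to\phi_1$ strongly in $X_0$ (Lemma \ref{l2.6} combined with truncation at level $|\phi_1|_\infty$). By Lemma \ref{l2.5},
$$\ld u,\phi_1^k\rd=\int_\Om u^{-\gamma}\phi_1^k\,dx+\la\int_\Om\Phi_u\,u^{2_\mu^\ast-1}\phi_1^k\,dx.$$
Letting $k\to\infty$, the right-hand side tends by monotone convergence to $\int_\Om u^{-\gamma}\phi_1\,dx+\la\int_\Om\Phi_u u^{2_\mu^\ast-1}\phi_1\,dx$, both finite: the Choquard integral because $u\in L^\infty$, and the singular one because the boundary asymptotics of $u$ from Theorem \ref{ART} and Lemma \ref{l3.6}, combined with $\phi_1\sim\de$ near $\partial\Om$ (Hopf-type boundary behaviour for $\mc M$), guarantee that $u^{-\gamma}\phi_1$ is integrable near $\partial\Om$ for every $\gamma>0$. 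The left-hand side converges to $\ld u,\phi_1\rd=\la_1\int_\Om u\phi_1\,dx$ by the eigenvalue identity. Applying the pointwise AM-GM inequality with $t=u(x)$, $A=\la\Phi_u(x)$ and using $\Phi_u\geq c_0$ gives
$$\la_1\int_\Om u\phi_1\,dx\;=\;\int_\Om\bigl(u^{-\gamma}+\la\,\Phi_u\,u^{2_\mu^\ast-1}\bigr)\phi_1\,dx\;\geq\;C_\beta c_0^\beta\la^\beta\int_\Om u\phi_1\,dx.$$
Dividing by $\int_\Om u\phi_1>0$ yields $\la\leq(\la_1/(C_\beta c_0^\beta))^{1/\beta}=:\Lambda_0$, so $\Lambda\leq\Lambda_0<\infty$.

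The main obstacle is justifying the limit $\ld u,\phi_1^k\rd\to\la_1\int_\Om u\phi_1$. When $\gamma<3$, Theorem \ref{ART} and Lemma \ref{l3.5} yield $u\in X_0$, so the limit is immediate from bilinearity of $\ld\cdot,\cdot\rd$ on $X_0$ together with the eigenvalue identity. For $\gamma\geq 3$, $u\notin X_0$ in general; one must instead use $u^{(\gamma+1)/4}\in X_0$ together with the quantitative estimate for the nonlocal part of $\ld u,\varphi\rd$ derived in the remark following Definition \ref{d2.3} (via \cite[Lemma 3.5]{CMSS}) to pass to the limit. A secondary technical point is invoking the principal eigenpair $(\la_1,\phi_1)$ for $\mc M$, which can either be cited or briefly verified by standard minimisation in $X_0$.
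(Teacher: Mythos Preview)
Your approach is genuinely different from the paper's. The paper argues by contradiction: assuming $\Lambda=+\infty$, it picks the local-minimiser solutions $u_{\la_k}\in X_0$ of $(\hat P_{\la_k})$ furnished by Theorem~\ref{t4.8}, combines the negative-energy inequality $\Phi(u_{\la_k})<0$ with the Euler--Lagrange identity and Lemma~\ref{l4.3}(ii)(c) to conclude that $\{\la_k^{-1/2}u_{\la_k}\}$ is bounded in $X_0$, and then tests $(\hat P_{\la_k})$ against a fixed compactly supported $0\le\psi\in C_c^\infty(\Om)$ to force $\ld w_0,\psi\rd=\infty$ for the weak limit $w_0$. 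Your eigenfunction/AM--GM route is more direct and constructive (it produces the explicit bound $\Lambda\le(\la_1/C_\beta c_0^\beta)^{1/\beta}$), applies to \emph{any} weak solution rather than only to minimisers, and avoids the energy bookkeeping; the paper's argument, by working with the translated problem $(\hat P_\la)$, has the compensating advantage of never needing to pair the solution with a non-compactly supported test function, which is exactly where your difficulty arises.

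For $\gamma\ge3$, however, your proposed fix via $u^{(\gamma+1)/4}\in X_0$ and the CMSS estimate does not close the gap you correctly identify. Those tools control only the \emph{nonlocal} part of $\ld u,\phi_1^k\rd$; the obstruction is the local term $\int_\Om\na u\cdot\na\phi_1^k$. Since $u\sim\de^{2/(\gamma+1)}$ near $\partial\Om$, one has $\na u\notin L^2(\Om)$ once $\gamma\ge3$, so the $X_0$-convergence $\phi_1^k\to\phi_1$ alone does not yield convergence of this integral, and knowing $u^{(\gamma+1)/4}\in X_0$ gives information about $\na(u^{(\gamma+1)/4})=\tfrac{\gamma+1}{4}u^{(\gamma-3)/4}\na u$, not about $\na u$ paired against an arbitrary $X_0$-gradient. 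A workable repair is to split $u=\hat u+w$ with $w=u-\hat u\in X_0$ (Lemma~\ref{l3.5}): then $\ld w,\phi_1^k\rd\to\la_1\int_\Om w\phi_1$ by the eigenvalue identity, while $\ld\hat u,\phi_1^k\rd=\int_\Om\hat u^{-\gamma}\phi_1^k\to\int_\Om\hat u^{-\gamma}\phi_1$ by $(P_0)$ and monotone convergence; the remaining identity $\int_\Om\hat u^{-\gamma}\phi_1=\la_1\int_\Om\hat u\phi_1$ follows by passing to the limit in the approximating problems of \cite{AR}, whose solutions do lie in $X_0$. Without such an argument the case $\gamma\ge3$ remains incomplete in your proposal.
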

\begin{proof}
	Suppose on the contrary that $\La =+\infty$. This means that there exist sequences $\{\la_k\}_{k \in \mb N}$ and $\{u_{\la_k}\}_{k \in \mb N}$ such that $\la_k \ra +\infty$ as $k \ra \infty$ and $u_{\la_k}$ is the corresponding solution to $(\hat{P}_{\la_k})$. Then by Theorem \ref{t4.8}, $\Phi(u_{\la_k}) <0$ and $u_{\la_k}$ is a local minimizer of $\Phi_{K_0}$. Thus we have
	\begin{equation}\label{e4.10}
		\frac{1}{2}\|u_{\la_k}\|^2 +\I\Om F(x,u_{\la_k})dx -\frac{\la_k}{2 2_\mu^*}	\I\Om\I\Om \frac{(u_{\la_k}+\hat{u})^{2_\mu^*}(u_{\la_k}+\hat{u})^{2_\mu^*}}{|x-y|^\mu}dxdy <0,
	\end{equation}
and 
	\begin{equation}\label{e4.11}
	\|u_{\la_k}\|^2 +\I\Om f(x,u_{\la_k})u_{\la_k}dx -\la_k	\I\Om\I\Om \frac{(u_{\la_k}+\hat{u})^{2_\mu^*}(u_{\la_k}+\hat{u})^{2_\mu^*-1}u_{\la_k}}{|x-y|^\mu}dxdy =0.
\end{equation}
From \eqref{e4.10} and \eqref{e4.11}, we get
\begin{align*}
0<\frac{\la_k}{2}&\I\Om\I\Om\left( \frac{(u_{\la_k}+\hat{u})^{2_\mu^*}(u_{\la_k}+\hat{u})^{2_\mu^*}-1/2_\mu^*(u_{\la_k}+\hat{u})^{2_\mu^*}(u_{\la_k}+\hat{u})^{2_\mu^*-1}u_{\la_k}}{|x-y|^\mu}\right) dxdy\\
&-\I\Om \left(F(x,u_{\la_k})- \frac12 f(x,u_{\la_k}u_{\la_k})\right)dx.
\end{align*}
By Lemma \ref{l4.3}, we have $F(x,u_{\la_k})-f(x,u_{\la_k})u_{\la_k}/2 \geq 0$ which implies
\begin{align}\label{e4.12}
	\frac12 \I\Om\I\Om \frac{(u_{\la_k}+\hat{u})^{2_\mu^*}(u_{\la_k}+\hat{u})^{2_\mu^*-1}u_{\la_k}}{|x-y|^\mu}dxdy <\frac{1}{2 2_\mu^* }\I\Om\I\Om \frac{(u_{\la_k}+\hat{u})^{2_\mu^*}(u_{\la_k}+\hat{u})^{2_\mu^*}}{|x-y|^\mu}dxdy.
\end{align}
Employing the fact that $\hat{u} \in L^\infty(\Om)$, we conclude the following
\begin{equation*}
\lim_{t \ra \infty}	\frac{\left(\I\Om \frac{(t+\hat{u})^{2_\mu^*}}{|x-y|^\mu}dy\right)(t+\hat{u})^{2_\mu^*}}{\left(\I\Om \frac{(t+\hat{u})^{2_\mu^*}}{|x-y|^\mu}dy\right)(t+\hat{u})^{2_\mu^*-1}t}=1. 
\end{equation*}
Therefore, it follows that for any $\e >0$ small enough, there exists $m_\e >0$ such that, for all $k$
\begin{align}\nonumber \label{e4.13}
	\frac{1}{2 2_\mu^* }\I\Om\I\Om \frac{(u_{\la_k}+\hat{u})^{2_\mu^*}(u_{\la_k}+\hat{u})^{2_\mu^*}}{|x-y|^\mu}&dxdy\\
	 &< \frac{1}{2+\e}\I\Om\I\Om \frac{(u_{\la_k}+\hat{u})^{2_\mu^*}(u_{\la_k}+\hat{u})^{2_\mu^*-1}u_{\la_k}}{|x-y|^\mu}dxdy +m_\e.
\end{align}
Combining \eqref{e4.12} and \eqref{e4.13}, we see that
\begin{equation*}
	\I\Om\I\Om \frac{(u_{\la_k}+\hat{u})^{2_\mu^*}(u_{\la_k}+\hat{u})^{2_\mu^*-1}u_{\la_k}}{|x-y|^\mu}dxdy <\infty~\text{for all}~k.
\end{equation*}
Now from \eqref{e4.11}, we have
\begin{equation*}
	\|u_{\la_k}\|^2 < \la_k	\I\Om\I\Om \frac{(u_{\la_k}+\hat{u})^{2_\mu^*}(u_{\la_k}+\hat{u})^{2_\mu^*-1}u_{\la_k}}{|x-y|^\mu}dxdy.
\end{equation*}
This means $\{\la_k^{-1/2}u_{\la_k}\}_{k \in \mb N}$ is uniformly bounded in $X_0$. Then there exists $w_0 \in X_0$ such that $w_k :=\la_k^{-1/2}u_{\la_k} \rightharpoonup w_0$ weakly in $X_0$. Let $ 0\leq \psi \in C_c^\infty(\Om)$ be a nontrivial function. Let $m>0$ such that $\hat{u}>m$ on supp$(\psi)$. Again using \eqref{e4.11}, we deduce that
\begin{align*}
	\sqrt{\la_k} \I\Om\I\Om\frac{m^{22_\mu^*-1}\psi}{|x-y|^\mu}dxdy \leq& \sqrt{\la_k}\I\Om\I\Om \frac{(u_{\la_k}+\hat{u})^{2_\mu^*}(u_{\la_k}+\hat{u})^{2_\mu^*-1}\psi}{|x-y|^\mu}dxdy\\
	=& \ld w_k, \psi\rd +\frac{1}{\sqrt{\la_k}}\I\Om f(x,u_{\la_k})\psi dx\\
	\leq& \ld w_k, \psi\rd +\frac{1}{\sqrt{\la_k}}\I\Om k^{-\gamma}\psi dx.
\end{align*}
Now passing the limit $k \ra \infty$, we have $\ld w_0,\psi\rd =\infty$, which is not true. Hence $\La <\infty$. \QED 
	\end{proof}	
	\subsection{Second solution} 
In this subsection we will prove the existence of a second solution to $(\hat{P}_\la)$. We denote by $v$ the first solution to $(\hat{P}_\la)$ as obtained in Theorem \ref{t4.8}.
\begin{Proposition}\label{p4.10}
	The functional $\Phi_{K_{v}}$ satisfies the $(CPS)_d$ for each $d$ satisfying
	\begin{equation*}
		d < \Phi_{K_{v}}(v) +\frac12 \left(\frac{n-\mu+2}{2n-\mu}\right) \left(\frac{S_{H,L}^\frac{2n-\mu}{n-\mu+2}}{\la^\frac{n-2}{n-\mu+2}}\right).
	\end{equation*}
\end{Proposition}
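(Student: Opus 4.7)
I would take a Cerami sequence $\{w_k\}_{k\in\mb N}\subset D(\Phi_{K_v})$, namely one satisfying $\Phi_{K_v}(w_k)\to d$ and $(1+\|w_k\|)|||\partial^-\Phi_{K_v}(w_k)|||\to 0$, and proceed in four steps: boundedness, identification of a weak limit solution, Br\'ezis--Lieb splitting, and a level--bound contradiction driven by the threshold constant $S_{H,L}$.

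For boundedness, since $w_k\geq v$ a.e., both $\varphi=2w_k-v$ and $\varphi=v$ lie in $K_v$. Plugging them into the characterisation of $\partial^-\Phi_{K_v}(w_k)$ from Lemma \ref{l4.1} yields two one-sided inequalities whose sum is, up to an $o(1)(1+\|w_k\|)$ error, the Nehari-type identity
\begin{align*}
\|w_k\|^2+\I\Om f(x,w_k)(w_k-v)\,dx -\la \I\Om\I\Om\frac{(w_k+\hat u)^{2_\mu^\ast}(w_k+\hat u)^{2_\mu^\ast-1}(w_k-v)}{|x-y|^\mu}\,dxdy = \ld w_k,v\rd + o_k(1)(1+\|w_k\|).
\end{align*}
Combining this with $\Phi_{K_v}(w_k)\to d$ and using Lemma \ref{l4.3}(c) together with the $L^\infty$ bounds on $\hat u$ and $v$, the critical Choquard term can be absorbed on one side and the quadratic term $\|w_k\|^2$ dominates, giving $\|w_k\|\leq C$.

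Passing to a subsequence, $w_k\rightharpoonup w$ in $X_0$, $w_k\to w$ in $L^q(\Om)$ for each $q<2^\ast$ and a.e.\ on $\Om$; since $K_v$ is weakly closed, $w\in K_v$. Taking limits in the subdifferential inequality against compactly supported admissible test functions (using Fatou for the singular term $f(\cdot,w_k)$ and the compactness of the Riesz potential against weakly convergent sequences) shows that $w$ is a weak solution of $(\hat P_\la)$. Write $z_k:=w_k-w$. The $X_0$ Br\'ezis--Lieb identity together with its Choquard counterpart (the cross terms involving the fixed bounded function $\hat u$ going to zero) yields
\begin{align*}
\|w_k\|^2=\|w\|^2+\|z_k\|^2+o(1),\qquad \|w_k+\hat u\|_{HL}^{22_\mu^\ast}=\|w+\hat u\|_{HL}^{22_\mu^\ast}+\|z_k\|_{HL}^{22_\mu^\ast}+o(1),
\end{align*}
while $\I\Om F(x,w_k)\to \I\Om F(x,w)$ and $\I\Om f(x,w_k)(w_k-v)\to \I\Om f(x,w)(w-v)$ by dominated convergence (exploiting $0\leq f(x,u)\leq \hat u^{-\gamma}$ for $u\geq v\geq 0$).

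Setting $b:=\lim\|z_k\|^2$ and $c:=\lim\|z_k\|_{HL}^{22_\mu^\ast}$, subtracting the identity satisfied by $w$ from the Nehari-type identity for $w_k$ gives $b=\la c$, while the definition of $S_{H,L}$ applied to $z_k$ gives $b\geq S_{H,L}\,c^{1/2_\mu^\ast}$. If $c>0$, these force $b\geq S_{H,L}^{\frac{2n-\mu}{n-\mu+2}}\la^{-\frac{n-2}{n-\mu+2}}$, and then expanding
\begin{align*}
d=\Phi_{K_v}(w)+\Big(\tfrac12-\tfrac1{2\cdot 2_\mu^\ast}\Big)b+o(1)=\Phi_{K_v}(w)+\tfrac{n-\mu+2}{2(2n-\mu)}\,b+o(1)
\end{align*}
together with $\Phi_{K_v}(w)\geq\Phi_{K_v}(v)$ would yield $d\geq \Phi_{K_v}(v)+\tfrac12\big(\tfrac{n-\mu+2}{2n-\mu}\big)\big(S_{H,L}^{(2n-\mu)/(n-\mu+2)}/\la^{(n-2)/(n-\mu+2)}\big)$, contradicting the hypothesis on $d$. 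Hence $c=0$, and so $b=\la c=0$, i.e.\ $w_k\to w$ strongly in $X_0$. The main obstacle in this plan is the inequality $\Phi_{K_v}(w)\geq\Phi_{K_v}(v)$: since the Choquard term is not convex, this is not automatic for a general critical point, and I would establish it by examining $g(t):=\Phi_{K_v}(v+t(w-v))$ along the segment $v+t(w-v)\in K_v$, using the convexity of $F(x,\cdot)$ and Lemma \ref{l4.3}(b) to control the singular part together with the Nehari identity for $v$ which kills $g'(0)$. A related subtle point is making the Nehari-type identity for $w_k$ rigorous, which requires $f(x,w_k)(w_k-v)\in L^1(\Om)$ uniformly in $k$; this follows from $w_k\geq v$, the uniform pointwise bound $|f(x,u)|\leq \hat u^{-\gamma}$ on the relevant range, and the asymptotic behaviour of $v$ near $\partial\Om$ supplied by Lemma \ref{l3.6}.
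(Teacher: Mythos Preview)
Your overall strategy follows the standard Palais--Smale analysis, but the step you yourself flag as ``the main obstacle'' is a genuine gap that your proposed fix does not close. You need $\Phi_{K_v}(w)\geq\Phi_{K_v}(v)$, where $w$ is the weak limit, but $v$ is only a \emph{local} minimizer of $\Phi_{K_v}$ (from Theorem~\ref{t4.8}) and the functional is unbounded below on $K_v$. Your plan to study $g(t)=\Phi_{K_v}(v+t(w-v))$ cannot work: the quadratic and singular parts of $g$ are convex, but the Choquard contribution $t\mapsto -\tfrac{\la}{22_\mu^\ast}\|v+t(w-v)+\hat u\|_{HL}^{22_\mu^\ast}$ is strictly concave, so $g'(0)=0$ (which indeed follows from the equation for $v$) does \emph{not} imply $g(1)\geq g(0)$. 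There is no monotonicity or second-order information available to rescue this.

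The paper circumvents this comparison entirely. Rather than decomposing $d=\Phi_{K_v}(v_0)+\bigl(\tfrac12-\tfrac{1}{22_\mu^\ast}\bigr)b$ and then invoking $\Phi_{K_v}(v_0)\geq\Phi_{K_v}(v)$, it estimates $\Phi_{K_v}(v_k)-\Phi_{K_v}(v)$ \emph{directly} from below using the midpoint-convexity inequality of Lemma~\ref{l4.3}(ii)(b),
\[
F(x,v_k)-F(x,v)-\tfrac12\bigl(f(x,v)+f(x,v_k)\bigr)(v_k-v)\geq 0,
\]
together with the equation for $v$ and the subdifferential inequality tested with $2v_k-v$. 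After Brezis--Lieb, this produces an explicit Choquard remainder $\mc P_1$ (involving $v$, $v_0$ and $\hat u$) which is shown to be nonnegative by an elementary symmetrisation argument, yielding $d-\Phi_{K_v}(v)\geq \la\bigl(\tfrac12-\tfrac{1}{22_\mu^\ast}\bigr)b^{22_\mu^\ast}$ without ever comparing $\Phi_{K_v}(v_0)$ to $\Phi_{K_v}(v)$. A secondary issue: your dominated-convergence claim $\int_\Om F(x,w_k)\to\int_\Om F(x,w)$ is not justified for large $\gamma$, since the natural majorant $\hat u^{-\gamma}w_k$ need not lie in $L^1(\Om)$; the paper also sidesteps this by using only the convexity inequality $\int F(x,v_0)\geq\int F(x,v_k)+\int f(x,v_k)(v_0-v_k)$ rather than convergence of $\int F(x,v_k)$.
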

\begin{proof}
	Let $ 	d < \Phi_{K_{v}}(v) +\ds\frac12 \left(\frac{n-\mu+2}{2n-\mu}\right) \left(\frac{S_{H,L}^\frac{2n-\mu}{n-\mu+2}}{\la^\frac{n-2}{n-\mu+2}}\right)$ be fixed and choose a sequence $\{v_k\}_{k \in \mb N} \subset D(\Phi_{K_v})$ such that
	\begin{equation*}
		\Phi_{K_v} (v_k) \ra d~\text{and}~(1+\|v_k\|)||| \partial^- \Phi_{K_v}(v_k)||| \ra 0~\text{as}~k \ra \infty.
	\end{equation*}
It implies there exists $ \al_k \in \partial^- \Phi_{K_v}(v_k)$ such that $\|\al_k\| =||| \partial^- \Phi_{K_v}(v_k)|||$ for every $k$. Using Lemma \ref{l4.1}, for each $w \in D(\Phi_{K_v})$ and for each $k$, $f(\cdot,v_k)(w -v_k) \in L^1(\Om)$ and
\begin{align}\nonumber\label{e4.14}
	\ld \al_k,w-v_k\rd \leq \ld v_k, w-v_k\rd &+\I\Om f(x,v_k)(w-v_k)dx\\
	&-\la \I\Om\I\Om\frac{(v_k+\hat{u})^{2_\mu^*}(v_k+\hat{u})^{2_\mu^*-1}(w-v_k)}{|x-y|^\mu}dxdy.
\end{align}
Using the fact that $F(\cdot,v_k) \in L^1(\Om)$ and Lemma \ref{l4.3}, we obtain that $F(\cdot,2v_k) \in L^1(\Om)$. So $2v_k \in D(\Phi_{K_v})$. Taking $w =2v_k$ in \eqref{e4.14}, we get
\begin{equation*}
		\ld \al_k,v_k\rd \leq \| v_k\|^2 +\I\Om f(x,v_k)v_kdx
	-\la \I\Om\I\Om\frac{(v_k+\hat{u})^{2_\mu^*}(v_k+\hat{u})^{2_\mu^*-1}v_k}{|x-y|^\mu}dxdy.
\end{equation*}
Now using Lemma \ref{l4.3}, \eqref{e4.13}, and \eqref{e4.14}, for $\e >0$ small enough, we have
\begin{align*}
	d+1 \geq& \frac12 \|v_k\|^2 +\I\Om F(x,v_k)dx -\frac{\la}{2 2_\mu^* }\I\Om\I\Om \frac{(v_k+\hat{u})^{2_\mu^*}(v_k+\hat{u})^{2_\mu^*}}{|x-y|^\mu}dxdy\\
	\geq & \frac12 \|v_k\|^2 +\I\Om F(x,v_k)dx +\frac{1}{2+\e}\left(\ld \al_k,v_k\rd -\|v_k\|^2 -\I\Om f(x,v_k)v_kdx\right)-\la m_\e\\
	\geq & \frac12 \|v_k\|^2  +\frac{1}{2+\e}\left(\ld \al_k,v_k\rd -\|v_k\|^2 \right)-\la m_\e.
\end{align*}
It shows that $\{v_k\}_{k \in \mb N}$ is a bounded sequence in $X_0$. Hence, up to a subsequence, there exists $v_0 \in X_0$ such that $v_k \rightharpoonup v_0$ weakly in $X_0$ as $k \ra \infty$. We assume, again up to a subsequence, that as $k \ra \infty$,
\begin{align*}
	\|v_k-v_0\|^2 \ra a^2~\text{and}~\I\Om\I\Om \frac{(v_k-v_0)^{2_\mu^*}(v_k-v_0)^{2_\mu^*}}{|x-y|^\mu}dxdy \ra b^{22_\mu^*}~\text{ as}~ k \ra \infty.
\end{align*}
Using the convexity of the function $F$, Brezis-Lieb Lemma and \eqref{e4.14}, we deduce that 
\begin{align*}
	\I\Om F(x,v_0)dx \geq& \I\Om F(x,v_k)dx +\I\Om f(x,v_k)(v_0-v_k)dx\\
	\geq& \I\Om F(x,v_k)dx  -\la \I\Om\I\Om\frac{(v_k+\hat{u})^{2_\mu^*}(v_k+\hat{u})^{2_\mu^*-1}(v_k-v_0)}{|x-y|^\mu}dxdy-\ld \al_k, v_k-v_0\rd\\
	&+\ld v_k, v_k -v_0\rd\\
	=&\I\Om F(x,v_k)dx-\ld \al_k, v_k-v_0\rd
	+\la \I\Om\I\Om\frac{(v_k+\hat{u})^{2_\mu^*}(v_k+\hat{u})^{2_\mu^*-1}(v_0+\hat{u})}{|x-y|^\mu}dxdy\\
	&-\la \I\Om\I\Om\frac{(v_0+\hat{u})^{2_\mu^*}(v_0+\hat{u})^{2_\mu^*}+(v_k-v_0)^{2_\mu^*}(v_k-v_0)^{2_\mu^*}}{|x-y|^\mu}dxdy+\ld v_k, v_k -v_0\rd.
\end{align*}
Taking into account the weak convergence of $v_k \rightharpoonup v_0$ in $X_0$, we obtain as $k \ra \infty$
\begin{equation*}
	\I\Om F(x,v_0) dx \geq \I\Om F(x,v_0) +a^2 -\la b^{22_\mu^*}.
\end{equation*}
This implies
\begin{equation}\label{e4.15}
	\la b^{2 2_\mu^*} \geq a^2.
\end{equation}
Now since $v$ is a weak positive solution to $(\hat{P}_\la)$, for each $k$, we have
\begin{align}\label{e4.16}
	0= \ld v, v_k-v\rd +\I\Om f(x,v)(v_k-v)dx +\la \I\Om\I\Om\frac{(v+\hat{u})^{2_\mu^*}(v+\hat{u})^{2_\mu^*-1}(v_k-v)}{|x-y|^\mu}dxdy.
\end{align}
Noting that $F(\cdot,v_k),~F(\cdot,2v_k) \in L^1(\Om)$ and $v \leq 2v_k -v \leq 2 v_k$, we infer that $2v_k -v \in D(\Phi_{K_v})$. Testing  \eqref{e4.14} with $2v_k -v$, we obtain
\begin{align}\nonumber \label{e4.17}
		\ld \al_k,v_k-v\rd \leq& \ld v_k,v_k-v\rd +\I\Om f(x,v_k)(v_k-v)dx\\
	&-\la \I\Om\I\Om\frac{(v_k+\hat{u})^{2_\mu^*}(v_k+\hat{u})^{2_\mu^*-1}(v_k-v)}{|x-y|^\mu}dxdy.
\end{align}
Taking into account Lemma \ref{l4.3}, \eqref{e4.16} and \eqref{e4.17}, we have
\begin{align}\nonumber\label{e4.18}
	\Phi_{K_v}(v_k)-\Phi_{K_v}(v)=&\frac12 \|v_k\|^2 +\I\Om F(x,v_k)dx -\frac{\la}{2 2_\mu^*}\I\Om\I\Om\frac{(v_k+\hat{u})^{2_\mu^*}(v_k+\hat{u})^{2_\mu^*}}{|x-y|^\mu}dxdy\\ \nonumber
	&- \frac12 \|v\|^2 -\I\Om F(x,v)dx +\frac{\la}{2 2_\mu^*}\I\Om\I\Om\frac{(v+\hat{u})^{2_\mu^*}(v+\hat{u})^{2_\mu^*}}{|x-y|^\mu}dxdy\\ \nonumber
	\geq & \I\Om \left(F(x,v_k)-F(x,v)-\frac12 (f(x,v)+f(x,v_k))(v_k-v)\right)dx\\ \nonumber
	&+ \frac{\la}{2 2_\mu^*}\I\Om\I\Om\frac{(v+\hat{u})^{2_\mu^*}(v+\hat{u})^{2_\mu^*}-(v_k+\hat{u})^{2_\mu^*}(v_k+\hat{u})^{2_\mu^*}}{|x-y|^\mu}dxdy +\frac12 \ld \al_k,v_k-v\rd\\ \nonumber
	&+\frac{\la}{2}\I\Om\I\Om\frac{\left((v+\hat{u})^{2_\mu^*}(v+\hat{u})^{2_\mu^*-1}-(v_k+\hat{u})^{2_\mu^*}(v_k+\hat{u})^{2_\mu^*-1}\right)(v_k-v  )}{|x-y|^\mu}dxdy\\ \nonumber
		\geq & \frac{\la}{2 2_\mu^*}\I\Om\I\Om\frac{(v+\hat{u})^{2_\mu^*}(v+\hat{u})^{2_\mu^*}-(v_k+\hat{u})^{2_\mu^*}(v_k+\hat{u})^{2_\mu^*}}{|x-y|^\mu}dxdy +\frac12 \ld \al_k,v_k-v\rd\\ \nonumber
		&+\frac{\la}{2}\I\Om\I\Om\frac{(v+\hat{u})^{2_\mu^*}(v+\hat{u})^{2_\mu^*-1}(v_k-v)-(v_k+\hat{u})^{2_\mu^*}(v_k+\hat{u})^{2_\mu^*-1}(v +\hat{u}  )}{|x-y|^\mu}dxdy \\  &+\frac{\la}{2}\I\Om\I\Om \frac{(v_k+\hat{u})^{2_\mu^*}(v_k+\hat{u})^{2_\mu^*}}{|x-y|^\mu}dxdy =: \mc P +\frac12 \ld \al_k,v_k-v\rd. 
\end{align}
Again using Brezis-Lieb Lemma, we have
\begin{align} \nonumber\label{e4.19}
	\mc P=& \la \left(\frac12 -\frac{1}{2 2_\mu^*}\right)\I\Om\I\Om \frac{(v_k-v_0)^{2_\mu^*}(v_k-v_0)^{2_\mu^*}+({v_0}+\hat{u})^{2_\mu^*}(v_0+\hat{u})^{2_\mu^*}}{|x-y|^\mu}dxdy \\ \nonumber
	&+\frac{\la}{2}\I\Om\I\Om\frac{(v+\hat{u})^{2_\mu^*}(v+\hat{u})^{2_\mu^*-1}(v_k-v)-(v_k+\hat{u})^{2_\mu^*}(v_k+\hat{u})^{2_\mu^*-1}(v +\hat{u}  )}{|x-y|^\mu}dxdy\\
	&+\frac{\la}{22_\mu^*}\I\Om\I\Om \frac{(v+\hat{u})^{2_\mu^*}(v+\hat{u})^{2_\mu^*}}{|x-y|^\mu}dxdy+o(1).
\end{align}
Using the weak convergence of the sequence $\{v_k\}_{k \in \mb N}$, we have as $k\to\infty$
\begin{align}\label{e4.20}
\noindent	\I\Om\I\Om\frac{(v+\hat{u})^{2_\mu^*}(v+\hat{u})^{2_\mu^*-1}(v_k-v_0)}{|x-y|^\mu}dxdy \ra 0 ~\text{and}
\end{align}
\begin{align}\label{e4.21}
	\I\Om\I\Om\frac{\left((v_k+\hat{u})^{2_\mu^*}(v_k+\hat{u})^{2_\mu^*-1}-(v_0+\hat{u})^{2_\mu^*}(v_0+\hat{u})^{2_\mu^*-1}\right)(v +\hat{u}  )}{|x-y|^\mu}dxdy \ra 0.
\end{align}
Combining \eqref{e4.18}-\eqref{e4.21} and passing the limit $ k \ra  \infty$, we obtain that
	\begin{align}\nonumber\label{e4.22}
	d- \Phi_{K_v}(v) \geq & \frac{\la}{22_\mu^*}\I\Om\I\Om \frac{(v+\hat{u})^{2_\mu^*}(v+\hat{u})^{2_\mu^*}}{|x-y|^\mu}dxdy +\la \left(\frac12 -\frac{1}{2 2_\mu^*}\right) b^{22_\mu^*}\\  \nonumber
	&+ \frac{\la}{2}\I\Om\I\Om\frac{\left((v+\hat{u})^{2_\mu^*}(v+\hat{u})^{2_\mu^*-1}+(v_0+\hat{u})^{2_\mu^*}(v_0+\hat{u})^{2_\mu^*-1}\right)(v_0-v)}{|x-y|^\mu}dxdy\\ 
	&-\frac{\la}{22_\mu^*}\I\Om\I\Om \frac{(v_0+\hat{u})^{2_\mu^*}(v_0+\hat{u})^{2_\mu^*}}{|x-y|^\mu}dxdy := \mc P_1(\text{say}) + \la \left(\frac12 -\frac{1}{2 2_\mu^*}\right) b^{22_\mu^*}.
\end{align}
Next we will show that $\mc P_1 \geq 0$. Indeed, we have
	\begin{align}	\nonumber\label{e4.23}
	\mc P_1 =&\frac{\la}{2}\I\Om\I\Om\frac{(v+\hat{u})^{2_\mu^*}\left((v+\hat{u})^{2_\mu^*-1}+(v_0+\hat{u})^{2_\mu^*-1}\right)(v_0-v)}{|x-y|^\mu}dxdy\\ 	\nonumber
	&-\frac{\la}{2}\I\Om \I\Om \frac{(v+\hat{u})^{2_\mu^*}(v_0+\hat{u})^{2_\mu^*-1}(v_0-v)}{|x-y|^\mu}dxdy\\ 	\nonumber
	&+\frac{\la}{2}\I\Om\I\Om\frac{(v_0+\hat{u})^{2_\mu^*}\left((v+\hat{u})^{2_\mu^*-1}+(v_0+\hat{u})^{2_\mu^*-1}\right)(v_0-v)}{|x-y|^\mu}dxdy\\ 	\nonumber
	&-\frac{\la}{2}\I\Om \I\Om \frac{(v_0+\hat{u})^{2_\mu^*}(v+\hat{u})^{2_\mu^*-1}(v_0-v)}{|x-y|^\mu}dxdy\\ \nonumber
	& + \frac{\la}{22_\mu^*}\I\Om\I\Om \frac{(v+\hat{u})^{2_\mu^*}\left((v+\hat{u})^{2_\mu^*}-(v_0+\hat{u})^{2_\mu^*}\right)}{|x-y|^\mu}dxdy\\
		& + \frac{\la}{22_\mu^*}\I\Om\I\Om \frac{(v_0+\hat{u})^{2_\mu^*}\left((v+\hat{u})^{2_\mu^*}-(v_0+\hat{u})^{2_\mu^*}\right)}{|x-y|^\mu}dxdy.
\end{align}
Since 
\begin{align*}
	(v+\hat{u})^{2_\mu^*} -(v_0 +\hat{u})^{2_\mu^*} =-2_\mu^* \I{v}^{v_0}(t+\hat{u})^{2_\mu^*-1}dt \geq -2_\mu^* \left(\frac{	(v+\hat{u})^{2_\mu^*-1}+(v_0 +\hat{u})^{2_\mu^*-1}}{2}\right)(v_0-v),
\end{align*}
we obtain
\begin{align}\nonumber \label{e4.24}
	\frac{\la}{22_\mu^*}\I\Om\I\Om& \frac{(v+\hat{u})^{2_\mu^*}\left((v+\hat{u})^{2_\mu^*}-(v_0+\hat{u})^{2_\mu^*}\right)}{|x-y|^\mu}dxdy\\
	 \geq& -\frac{\la}{4}\I\Om\I\Om \frac{(v+\hat{u})^{2_\mu^*}\left((v+\hat{u})^{2_\mu^*-1}+(v_0+\hat{u})^{2_\mu^*-1}\right)(v_0-v)}{|x-y|^\mu}dxdy.
\end{align}
Similarly, we have
\begin{align}\nonumber \label{e4.25}
	\frac{\la}{22_\mu^*}\I\Om\I\Om& \frac{(v_0+\hat{u})^{2_\mu^*}\left((v+\hat{u})^{2_\mu^*}-(v_0+\hat{u})^{2_\mu^*}\right)}{|x-y|^\mu}dxdy\\
	\geq& -\frac{\la}{4}\I\Om\I\Om \frac{(v_0+\hat{u})^{2_\mu^*}\left((v+\hat{u})^{2_\mu^*-1}+(v_0+\hat{u})^{2_\mu^*-1}\right)(v_0-v)}{|x-y|^\mu}dxdy.
\end{align}
From \eqref{e4.23}, \eqref{e4.24} and \eqref{e4.25}, we deduce that
\begin{align}\nonumber \label{e4.26}
	\mc P_1 =& \frac{\la}{4}\I\Om\I\Om \frac{(v+\hat{u})^{2_\mu^*}\left((v+\hat{u})^{2_\mu^*-1}-(v_0+\hat{u})^{2_\mu^*-1}\right)(v_0-v)}{|x-y|^\mu}dxdy\\ \nonumber
	&+ \frac{\la}{4}\I\Om\I\Om \frac{(v_0+\hat{u})^{2_\mu^*}\left((v_0+\hat{u})^{2_\mu^*-1}-(v+\hat{u})^{2_\mu^*-1}\right)(v_0-v)}{|x-y|^\mu}dxdy\\ \nonumber
	=&\frac{\la}{4}\I\Om\I\Om \frac{\left((v_0+\hat{u})^{2_\mu^*}-(v+\hat{u})^{2_\mu^*}\right)\left((v_0+\hat{u})^{2_\mu^*-1}-(v+\hat{u})^{2_\mu^*-1}\right)(v_0-v)}{|x-y|^\mu}dxdy\\
	\geq& 0.
\end{align}
Hence from \eqref{e4.22} and \eqref{e4.26}, we get
\begin{equation}\label{e4.27}
	d- \Phi_{K_v}(v) \geq \la \left(\frac12 -\frac{1}{2 2_\mu^*}\right) b^{22_\mu^*}.
\end{equation}
Using definition of $S_{H,L}$ and \eqref{e4.15}, we have $\la b^{22_\mu^*} \geq a^2$ and $a^2 \geq S_{H,L} b^2$, that is
\begin{equation}\label{e4.28}
	b \geq \left( \frac{S_{H,L}}{\la}\right)^{\frac{n-2}{2(n-\mu+2)}}.
\end{equation}
Using \eqref{e4.27} and \eqref{e4.28}, we get
\begin{align*}
		d- \Phi_{K_v}(v) \geq \la \left(\frac12 -\frac{1}{2 2_\mu^*}\right) \left( \frac{S_{H,L}}{\la}\right)^{\frac{2n-\mu}{(n-\mu+2)}} = \frac12 \left(\frac{n-\mu+2}{2n-\mu}\right) \left(\frac{S_{H,L}^\frac{2n-\mu}{n-\mu+2}}{\la^\frac{n-2}{n-\mu+2}}\right).
\end{align*}
It contradicts the fact that $ 	d < \Phi_{K_{v}}(v) +\ds\frac12 \left(\frac{n-\mu+2}{2n-\mu}\right) \left(\frac{S_{H,L}^\frac{2n-\mu}{n-\mu+2}}{\la^\frac{n-2}{n-\mu+2}}\right)$. Hence $a=0$. \QED
	\end{proof}	
Now consider the family of minimizers of the best constant $S_{H,L}$ (see Lemma \ref{l2.2} ) given by
\begin{align*}
	V_\e(x) =S^\frac{(n-\mu)(2-n)}{4(n-\mu+2)} (C(n,\mu))^\frac{2-n}{2(n-\mu+2)}\left(\frac{\e}{\e^2+|x|^2}\right)^\frac{n-2}{2},~0<\e<1.
\end{align*}
	Let $\de >0$ such that $B_{4\de} \subset \Om$. Now define $\psi \in C_c^\infty(\Om)$ such that $0 \leq \psi \leq 1$ in $\mb R^n$, $\psi \equiv 1$ in $B_\de(0)$ and $\psi \equiv 0$ in $\mb R^n \setminus B_{2\de}(0)$. For each $\e >0$ and $x \in \mb R^n$, we define $u_\e(x) =\psi(x)V_\e(x)$. Then we have the following:
	\begin{Proposition}\label{p4.11}
		Let $n \geq 3$, $0<\mu <n$ then the following holds:
		\begin{enumerate}[label =(\roman*)]
			\item  $\ll u_\e \gg^2 \leq S_{H,L}^\frac{2n-\mu}{n-\mu+2}+O (\e^{n-2}).$
			\item $\|u_\e\|_{HL}^{22_\mu^*} \leq S_{H,L}^\frac{2n-\mu}{n-\mu+2} +O(\e^n)$.
			\item $\|u_\e\|_{HL}^{22_\mu^*} \geq  S_{H,L}^\frac{2n-\mu}{n-\mu+2} -O(\e^n)$.
			\item $[u _\e]_s^2 =O(\e^{\nu_{s,n}})$, where $\nu_{s,n}=\min\{n-2,2-2s\}$.
		\end{enumerate}
	\end{Proposition}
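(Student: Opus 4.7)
The plan is to exploit the scaling property $V_\e(x) = \e^{-(n-2)/2} V_1(x/\e)$ together with standard cut-off estimates in the spirit of Brezis-Nirenberg, adapted to the Hardy-Littlewood-Sobolev framework as done in \cite{GY,AGS}. First, I would record the two identities that follow from the Euler-Lagrange equation of $V_\e$, namely
\begin{equation*}
\ll V_\e \gg^2 \,=\, \|V_\e\|_{HL}^{22_\mu^*} \,=\, S_{H,L}^{\frac{2n-\mu}{n-\mu+2}},
\end{equation*}
and the pointwise estimates $V_\e(x) \sim \e^{(n-2)/2}/|x|^{n-2}$, $|\nabla V_\e(x)| \sim \e^{(n-2)/2}/|x|^{n-1}$ on $\mb R^n\setminus B_\delta$.

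For (i), I would expand $\nabla u_\e = \psi \nabla V_\e + V_\e \nabla \psi$, so that
\begin{equation*}
\ll u_\e\gg^2 \,\leq\, \ll V_\e \gg^2 \,-\, \int_{\mb R^n \setminus B_\delta}|\nabla V_\e|^2 dx \,+\, C\!\int_{B_{2\delta}\setminus B_\delta}\!\!\! (|\nabla V_\e|^2 + V_\e^2|\nabla\psi|^2)dx.
\end{equation*}
A direct computation on the annulus, using the pointwise decay above, yields $O(\e^{n-2})$, which gives (i). The estimates (ii)--(iii) follow along the same pattern: split
\begin{equation*}
\|u_\e\|_{HL}^{22_\mu^*} \,=\, \|V_\e\|_{HL}^{22_\mu^*} \,+\, \Big(\|u_\e\|_{HL}^{22_\mu^*} - \|V_\e\|_{HL}^{22_\mu^*}\Big),
\end{equation*}
write the correction as a double integral over regions where $\psi < 1$, and use the Hardy-Littlewood-Sobolev inequality together with the tail bound $V_\e^{2_\mu^*}(x) \lesssim \e^{n-\mu/2}/|x|^{2n-\mu}$ on $|x|\geq\delta$. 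Both the upper and lower bounds reduce to integrals of the form $\int_{|x|\geq\delta} V_\e^{2 \cdot 2_\mu^*} dx = O(\e^n)$.

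The one requiring actual care is (iv). Here I would start from the rescaling identity
\begin{equation*}
[V_\e]_s^2 \,=\, \e^{2-2s}\, [V_1]_s^2,
\end{equation*}
which is finite since $V_1\in H^1(\mb R^n)\subset H^s(\mb R^n)$. Then I would control the cut-off correction through
\begin{equation*}
\bigl[u_\e\bigr]_s^2 \,\leq\, 2\bigl[V_\e\bigr]_s^2 \,+\, 2\bigl[(1-\psi)V_\e\bigr]_s^2,
\end{equation*}
and split the remaining Gagliardo integral over $\{|x-y|\leq 1\}$ and $\{|x-y|>1\}$. The near-diagonal piece is absorbed using $|(1-\psi)V_\e(x) - (1-\psi)V_\e(y)|\leq |\nabla((1-\psi)V_\e)(\xi)||x-y|$ and the local $H^1$ bound, while the far-diagonal piece is controlled by $\int_{\mb R^n\setminus B_\delta} V_\e^2 dx = O(\e^{n-2})$ (using $|x-y|^{-(n+2s)}\leq 1$ there). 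Combining all contributions produces the optimal rate $\e^{\min\{n-2,\, 2-2s\}}$.

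The main obstacle is precisely estimate (iv): unlike the local gradient, the Gagliardo seminorm does not localize, so the truncation $(1-\psi)V_\e$ has non-trivial nonlocal interactions with itself that must be estimated both in the near- and far-diagonal regimes, and balancing these two contributions is what forces the exponent $\nu_{s,n}=\min\{n-2,2-2s\}$ rather than either value alone. Once (iv) is established, parts (i)--(iii) are classical Brezis-Nirenberg-type computations adapted to the Choquard setting.
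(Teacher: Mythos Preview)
Your treatment of (i)--(iii) follows the standard Brezis--Nirenberg computations adapted to the Choquard setting and matches in spirit what the paper cites from \cite{W} and \cite{GMS1}; this part is fine.

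There is, however, a genuine gap in your argument for (iv). You write that $[V_\e]_s^2=\e^{2-2s}[V_1]_s^2$ is finite ``since $V_1\in H^1(\mb R^n)\subset H^s(\mb R^n)$''. This is false in low dimensions: $V_1(x)\sim |x|^{-(n-2)}$ at infinity, so $V_1\notin L^2(\mb R^n)$ for $n\in\{3,4\}$, hence $V_1\notin H^1(\mb R^n)$. Worse, a Fourier computation (using $\widehat{V_1}(\xi)\sim|\xi|^{-2}$ near the origin, which follows from $-\De V_1=cV_1^{(n+2)/(n-2)}\in L^1$) shows that $[V_1]_s<\infty$ if and only if $n+2s>4$; in particular $[V_1]_s=\infty$ when $n=3$ and $s\le 1/2$. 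In that regime your splitting $[u_\e]_s^2\le 2[V_\e]_s^2+2[(1-\psi)V_\e]_s^2$ gives $\infty\le\infty$ and carries no information. The same failure hits your far-diagonal bound: you claim $\int_{\mb R^n\setminus B_\de}V_\e^2\,dx=O(\e^{n-2})$, but this integral \emph{diverges} for $n\le 4$ because $\int_\de^\infty r^{3-n}\,dr=\infty$.

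The proof in \cite{BDVV} that the paper cites avoids this by never invoking $[V_\e]_s$: it works directly with $u_\e=\psi V_\e$, which has compact support in $B_{2\de}$, and decomposes the Gagliardo double integral according to the positions of $x$ and $y$ relative to $B_\de$, $B_{2\de}$, and the exterior. The ``tail'' contribution then involves $\int_{|y|>2\de}|x-y|^{-(n+2s)}\,dy$ paired with $u_\e^2(x)$ on the \emph{bounded} set $B_{2\de}$, which is finite in every dimension. The competition between the $\e^{2-2s}$ coming from the bulk scaling and the $\e^{n-2}$ coming from the localized $L^2$-type tails is precisely what produces $\nu_{s,n}=\min\{n-2,2-2s\}$; your decomposition never isolates a finite $O(\e^{n-2})$ contribution in dimensions $3$ and $4$, so the minimum is not recovered there.
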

\begin{proof}
	For proof of part $(i)$, we refer to \cite[Lemma 1.46]{W}. For $(ii)$ and $(iii)$, see \cite[Proposition 2.8]{GMS1}. Lastly for a proof of part $(iv) $, see \cite[p22]{BDVV}.\QED 
	\end{proof}	
\begin{Lemma}\label{l4.12}
	The following holds:
	\begin{enumerate}[label=(\roman*)]
		\item If $ \mu < \min\{4,n\}$ then for all $\zeta <1$,
		\begin{align*}
			\| v + t u_\e\|_{HL}^{22_\mu^*} \geq& \|v\|_{HL}^{22_\mu^*}+\|u_\e\|_{HL}^{22_\mu^*} +\widetilde{C} t^{22_\mu^* -1}\I\Om\I\Om \frac{(u_\e(x))^{2_\mu^*}(u_\e(y))^{2_\mu^*-1}v(y)}{|x-y|^\mu}dxdy\\
			+& 22_\mu^*t \I\Om\I\Om \frac{(v(x))^{2_\mu^*}(v(y))^{2_\mu^*-1} u_\e(y)}{|x-y|^\mu}dxdy -O(\e^{(\frac{2n-\mu}{4})\zeta}).
				\end{align*}
			\item There exists a constant $T_0 >0$ such that $\ds\I\Om\I\Om \frac{(u_\e(x))^{2_\mu^*}(u_\e(y))^{2_\mu^*-1} v(y)}{|x-y|^\mu}dxdy \geq \ds \widetilde{C} T_0 \e^{\frac{n-2}{2}}$.
	\end{enumerate}
\end{Lemma}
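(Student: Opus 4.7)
My plan is to handle the two parts of Lemma \ref{l4.12} separately. For part (i), I would start from a pointwise elementary inequality
\[(a+b)^{2_\mu^*} \geq a^{2_\mu^*} + b^{2_\mu^*} + 2_\mu^*\, a^{2_\mu^*-1}b + 2_\mu^*\, a b^{2_\mu^*-1} - \mathcal{E}(a,b), \qquad a,b\geq 0,\]
where $\mathcal{E}(a,b)\geq 0$ is a remainder of mixed power type $a^{\alpha_1}b^{\alpha_2}$ with $1<\alpha_i<2_\mu^*-1$; such an inequality is available because the hypothesis $\mu<\min\{4,n\}$ forces $2_\mu^*>2$, and $\mathcal{E}$ vanishes entirely when $2_\mu^*\geq 3$. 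Applying this bound pointwise to $v+tu_\e$ at both $x$ and $y$, multiplying the two expansions, and integrating against the kernel $|x-y|^{-\mu}$, the diagonal products produce the leading terms $\|v\|_{HL}^{22_\mu^*}+t^{22_\mu^*}\|u_\e\|_{HL}^{22_\mu^*}$. Using the $x\leftrightarrow y$ symmetry of the kernel, the first-order mixed products collapse (up to a factor $2$) into exactly the two displayed cross integrals, with constant $\widetilde C=22_\mu^*$. Every remaining pure mixed term is nonnegative and may simply be discarded.

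The crucial step is then bounding the contribution coming from $\mathcal{E}$. Each residual term is of the form
\[\int_\Omega\!\int_\Omega \frac{u_\e^{\sigma_1}(x)\,v^{\rho_1}(x)\,u_\e^{\sigma_2}(y)\,v^{\rho_2}(y)}{|x-y|^\mu}\,dx\,dy.\]
Using $v\in L^\infty(\Omega)$ from Lemma \ref{lib} to bound the $v$-factors pointwise, I would apply the Hardy--Littlewood--Sobolev inequality together with the explicit scaling $V_\e(x)=\epsilon^{-(n-2)/2}V_1(x/\epsilon)$ to reduce each residual to $L^p$-norms of powers of $V_\e$, which by direct computation are $O(\epsilon^\theta)$ for positive exponents $\theta$ depending on $(\sigma_i,\rho_i,n,\mu)$. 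The factor $\zeta<1$ in the target $(2n-\mu)\zeta/4$ provides the slack needed to absorb borderline logarithmic factors that occur when exponents sit at critical thresholds. The main obstacle in part (i) is precisely this bookkeeping: one has to check that every term in $\mathcal{E}$ contributes at least $\epsilon^{(2n-\mu)\zeta/4}$, and it is here that the restriction $\mu<4$ enters essentially.

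For part (ii), I would exploit the positivity of $v$ near the concentration point. By Lemma \ref{l3.6} and Theorem \ref{ART}, the solution $v$ obtained in Theorem \ref{t4.8} is continuous and strictly positive on every compact subset of $\Omega$, so in particular $v\geq c_0$ for some $c_0>0$ on $\overline{B_{2\delta}(0)}\supset \text{supp}\, u_\e$. Hence, using $\psi\equiv 1$ on $B_\delta(0)$,
\[\int_\Omega\!\int_\Omega \frac{u_\e^{2_\mu^*}(x)\,u_\e^{2_\mu^*-1}(y)\,v(y)}{|x-y|^\mu}\,dx\,dy \;\geq\; c_0 \int_{B_\delta}\!\int_{B_\delta} \frac{V_\e^{2_\mu^*}(x)\,V_\e^{2_\mu^*-1}(y)}{|x-y|^\mu}\,dx\,dy.\]
Rescaling $x=\epsilon X$, $y=\epsilon Y$, the $\epsilon$-powers arising from $V_\e^{2_\mu^*}$, $V_\e^{2_\mu^*-1}$, the kernel $|x-y|^{-\mu}$ and the measure $dx\,dy$ combine to $\epsilon^{(n-2)/2}$, and the rescaled integral converges as $\epsilon\to 0^+$ to a strictly positive finite constant
\[T_0 \;=\; \int_{\mathbb R^n}\!\int_{\mathbb R^n} \frac{dX\,dY}{(1+|X|^2)^{(2n-\mu)/2}(1+|Y|^2)^{(n-\mu+2)/2}|X-Y|^\mu},\]
finiteness following from $\mu<n$ and the polynomial decay of the Talenti profile at infinity. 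This yields the stated lower bound; the only technical point is checking the integrability of the limiting double integral near the diagonal and at infinity, which is a routine splitting of the integration domain.
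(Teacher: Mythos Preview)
The paper does not give its own proof of this lemma; it simply refers the reader to \cite[Lemma~4.2]{GS}. Your plan is exactly the standard route taken in that reference: for part~(i), the pointwise lower bound for $(a+b)^{2_\mu^*}$ with a mixed-power remainder $\mathcal E$ that is nontrivial only in the range $2<2_\mu^*<3$, followed by the Hardy--Littlewood--Sobolev inequality together with the explicit scaling of $V_\e$ and the boundedness of $v$ to show that every residual contributes $O(\e^{(2n-\mu)\zeta/4})$; for part~(ii), the lower bound $v\geq c_0>0$ on $\overline{B_{2\delta}}$, restriction to $B_\delta\times B_\delta$ where $\psi\equiv 1$, and the rescaling $x=\e X$, $y=\e Y$ producing the factor $\e^{(n-2)/2}$ and the finite limiting double integral. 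Your computation of the $\e$-exponent in~(ii) and the integrability check for $T_0$ via HLS with suitable $r,q$ are correct.
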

\begin{proof}
	For a proof, see the proof of  \cite[Lemma 4.2]{GS}. \QED
	\end{proof}
	\begin{Lemma}\label{l4.13}
		We have
		\begin{equation*}
			\sup\{\Phi_{K_v}(v+t u_\e):t \geq 0\} < \Phi_{K_v}(v)+
			 	\ds\frac12 \left(\frac{n-\mu+2}{2n-\mu}\right) \left(\frac{S_{H,L}^\frac{2n-\mu}{n-\mu+2}}{\la^\frac{n-2}{n-\mu+2}}\right)
		\end{equation*}
	for any sufficiently small $\e >0$ and $n+2s <6$. 
	\end{Lemma}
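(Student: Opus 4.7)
The plan is a Brezis--Nirenberg type estimate along the ray $t\mapsto v+tu_\e$, where the cut-off Choquard extremal bubbles $u_\e$ concentrate at the interior point $0\in B_{4\de}\Subset\Om$. Because $\mathrm{supp}(u_\e)\subset B_{2\de}(0)\Subset\Om$, the function $\hat u$ is bounded below by a positive constant on $\mathrm{supp}(u_\e)$, so the singular part of the functional is smooth along this direction. My aim is to compare the expansion of $\Phi_{K_v}(v+tu_\e)-\Phi_{K_v}(v)$ with a model scalar function $g(t)$ whose maximum over $t\geq 0$ equals the critical threshold $\kappa:=\tfrac{1}{2}\tfrac{n-\mu+2}{2n-\mu}\tfrac{S_{H,L}^{(2n-\mu)/(n-\mu+2)}}{\la^{(n-2)/(n-\mu+2)}}$, and then to extract a strictly negative correction of order $\e^{(n-2)/2}$ that dominates every error term.

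First I would expand
\begin{align*}
\Phi_{K_v}(v+tu_\e)-\Phi_{K_v}(v)=\frac{t^2}{2}\|u_\e\|^2&+t\ld v,u_\e\rd+\I\Om\bigl[F(\cdot,v+tu_\e)-F(\cdot,v)\bigr]dx\\
&-\frac{\la}{22_\mu^*}\bigl[\|v+tu_\e+\hat u\|_{HL}^{22_\mu^*}-\|v+\hat u\|_{HL}^{22_\mu^*}\bigr].
\end{align*}
Since $v$ weakly solves $(\hat P_\la)$ (Theorem \ref{t4.8} together with Lemma \ref{l2.7}), I would substitute $\ld v,u_\e\rd=-\I\Om f(\cdot,v)u_\e\,dx+\la\I\Om\I\Om\tfrac{(v+\hat u)^{2_\mu^*}(y)(v+\hat u)^{2_\mu^*-1}(x)u_\e(x)}{|x-y|^\mu}dxdy$. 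On $\mathrm{supp}(u_\e)$ the map $\tau\mapsto f(x,\tau)$ is smooth with bounded derivative, so a second-order Taylor estimate yields $\I\Om[F(\cdot,v+tu_\e)-F(\cdot,v)]\leq t\I\Om f(\cdot,v)u_\e+Ct^2|u_\e|_2^2$, and the $t\I\Om f(\cdot,v)u_\e$ contributions cancel exactly.

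Next I would apply Lemma \ref{l4.12}(i) with $V:=v+\hat u$ in place of $v$: the linear-in-$t$ cross term $22_\mu^*t\I\Om\I\Om\tfrac{V^{2_\mu^*}(x)V^{2_\mu^*-1}(y)u_\e(y)}{|x-y|^\mu}$ produced there cancels (after multiplication by $-\la/(22_\mu^*)$) the remaining Choquard contribution from the weak formulation. Combined with Proposition \ref{p4.11}(i),(iii),(iv) and the crucial lower bound of Lemma \ref{l4.12}(ii), what survives is
\[\Phi_{K_v}(v+tu_\e)-\Phi_{K_v}(v)\leq g(t)-c_0 t^{22_\mu^*-1}\e^{(n-2)/2}+\mc E(\e,t),\]
where $g(t):=\tfrac{t^2}{2}S-\tfrac{\la t^{22_\mu^*}}{22_\mu^*}S$ with $S:=S_{H,L}^{(2n-\mu)/(n-\mu+2)}$ and $\mc E(\e,t)=O(\e^{n-2})+O(\e^{\nu_{s,n}})+O(|u_\e|_2^2)+O(\e^n)+O(\e^{\frac{2n-\mu}{4}\zeta})$ uniformly for $t$ in a bounded range. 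Elementary calculus gives $\max_{t\geq 0}g(t)=\kappa$, attained at $t_0:=\la^{-(n-2)/(2(n-\mu+2))}$; moreover the critical term forces $\Phi_{K_v}(v+tu_\e)\to-\infty$ as $t\to+\infty$, so the supremum is realized at some $t_\e\in(0,\infty)$ with $t_\e\to t_0$ as $\e\to 0$. Evaluating at $t_\e$ therefore yields $\sup_{t\geq 0}\Phi_{K_v}(v+tu_\e)\leq\Phi_{K_v}(v)+\kappa-\tilde c_0\e^{(n-2)/2}+\mc E(\e)$ for some $\tilde c_0>0$.

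The main technical obstacle is the error accounting: one must verify $\mc E(\e)=o(\e^{(n-2)/2})$. The Choquard residual $\e^{(2n-\mu)\zeta/4}$ is dominated by choosing $\zeta<1$ close enough to $1$ and using $\mu<4$. The pure Talenti contributions $\e^{n-2}$ and $\e^n$ are trivially subdominant for $n\geq 3$. The local $L^2$ loss $|u_\e|_2^2$ is of order $\e$, $\e^2|\log\e|$, $\e^2$ for $n=3,4,\geq 5$ respectively, which stays below $\e^{(n-2)/2}$ precisely when $n<6$ (automatic under $n+2s<6$). Finally, the delicate piece is the fractional Gagliardo contribution $\e^{\nu_{s,n}}=\e^{\min\{n-2,2-2s\}}$, whose control against $\e^{(n-2)/2}$ is exactly the quantitative content of the hypothesis $n+2s<6$ combined with $\mu<\min\{4,n\}$. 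Assembling these estimates gives $\mc E(\e)-\tilde c_0\e^{(n-2)/2}<0$ for $\e$ small enough, thereby proving the strict inequality.
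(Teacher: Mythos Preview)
Your argument follows the same Brezis--Nirenberg template as the paper: expand $\Phi_{K_v}(v+tu_\e)-\Phi_{K_v}(v)$, cancel the linear-in-$t$ terms via the weak equation satisfied by $v$, invoke Lemma~\ref{l4.12} for the Choquard expansion and Proposition~\ref{p4.11} for the bubble asymptotics, and reduce to a one-variable maximization whose leading part gives exactly the threshold $\kappa$. The only substantive variations are in two auxiliary estimates. First, for the singular part you use a second-order Taylor bound $F(x,v+tu_\e)-F(x,v)-tf(x,v)u_\e\le Ct^2u_\e^2$ and then control $|u_\e|_2^2$; the paper instead uses the pointwise inequality $\le T_2(tu_\e)^\rho$ for some $1<\rho<\min\{2,\tfrac{2}{n-2}\}$ together with $\int|u_\e|^\rho\le T_1\e^{(n-2)\rho/2}$. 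Second, you take $\zeta<1$ close to $1$, whereas the paper sets $\zeta=2/2_\mu^\ast$. Both routes land on the same final comparison $O(\e^{\nu_{s,n}})-C\e^{(n-2)/2}+o(\e^{(n-2)/2})$, and the paper appeals to $n+2s<6$ at the same step you do.

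One caveat on your bookkeeping: you write that the dominance of $\e^{(n-2)/2}$ over $\e^{\nu_{s,n}}$ ``is exactly the quantitative content of the hypothesis $n+2s<6$.'' When $\nu_{s,n}=2-2s$, the inequality $2-2s>(n-2)/2$ is equivalent to $n+4s<6$, which is strictly stronger than $n+2s<6$ (e.g.\ $(n,s)=(3,\tfrac45)$ has $n+2s<6$ but $\nu_{s,n}=0.4<0.5$). The paper glides over the identical point, so this is not a deviation from its proof, but your explicit claim overstates what the hypothesis delivers.
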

\begin{proof}
	Taking into account the fact that $v$ is a weak solution of $({P}_\la)$ and employing Lemma \ref{l4.12}, for all $\zeta <1$, we have
	\begin{align*}
		\Phi_{K_v}(v+ t u_\e) -\Phi_{K_v}(v) \leq& \frac12 \|t u_\e\|^2-\frac{\la}{2 2_\mu^*}\|t u_\e\|_{HL}^{22_\mu^*} 
		+ \I\Om \left(F(v + tu_\e)-F(x,v)-f(x,v)t u_\e\right)dx\\
		&-\frac{\la\widetilde{C}t^{22_\mu^*-1}}{22_\mu^*}\I\Om\I\Om\frac{(u_\e(x))^{2_\mu^*}(u_\e(y))^{2_\mu^*-1}v(y)}{|x-y|^\mu}dxdy+O(\e^{(\frac{2n-\mu}{4})\zeta}).
	\end{align*}
Using Proposition \ref{p4.11} and Lemma \ref{l4.12}, we obtain
\begin{align}\nonumber \label{e4.29}
		\Phi_{K_v}(v+ t u_\e) -\Phi_{K_v}(v) \leq& \frac{t^2}{2}\left(S_{H,L}^\frac{2n-\mu}{n-\mu+2}+O(\e^{\nu_{s,n}})\right)-\frac{\la t^{22_\mu^*}}{2 2_\mu^*} \left(S_{H,L}^\frac{2n-\mu}{n-\mu+2}-O(\e^n)\right)+O(\e^{(\frac{2n-\mu}{4})\zeta})\\
		&+\I\Om \left(F(v + tu_\e)-F(x,v)-f(x,v)t u_\e\right)dx-\frac{\la\widetilde{C}t^{22_\mu^*-1}}{22_\mu^*} \widetilde{C} T_0 \e^\frac{n-2}{2}.
	\end{align}
We see that for any  fix $1<\rho <\min\{2,\frac{2}{n-2}\}$, there exists $T_1>0$ such that
$\ds	\I\Om |u_\e|^\rho dx \leq T_1 \e^\frac{(n-2)\rho}{2}.$ Moreover, there exists $T_2 >0$ such that, for all $x \in \Om$, $p>m$ and $r \geq 0$,
\begin{equation*}
	F(x,p+r)-F(x,r) -f(x,p)r=\I{p}^{p+r} (\tau^{-\gamma}-p^{-\gamma})d\tau \leq T_2 r^\rho.
\end{equation*}
Using last inequality and \eqref{e4.29} with $\zeta =\frac{2}{2_\mu^*}$, we obtain
\begin{align*}
	\Phi_{K_v}(v+ t u_\e) -\Phi_{K_v}(v) \leq& \frac{t^2}{2}\left(S_{H,L}^\frac{2n-\mu}{n-\mu+2}+O(\e^{\nu_{s,n}})\right)-\frac{\la t^{22_\mu^*}}{2 2_\mu^*} \left(S_{H,L}^\frac{2n-\mu}{n-\mu+2}-O(\e^n)\right)\\
	&-\frac{\la\widetilde{C}t^{22_\mu^*-1}}{22_\mu^*} \widetilde{C} T_0 \e^\frac{n-2}{2}+T_1T_2t^\rho \e^\frac{(n-2)\rho}{2} +o\left(\e^\frac{n-2}{2}\right)\\
			:=& g(t).
\end{align*}
Clearly, $ g(t) \ra -\infty$ as $t \ra \infty$, $g(t)>0$ as $t\ra 0^+$ and there exists $ t_\e >0$ such that $g^\prime(t_\e)=0$. Furthermore, there exists positive constants $R_1$ and $R_2$ such that $R_1 \leq t_\e \leq R_2$. Hence
\begin{align*}
	g(t) 
	 \leq& \frac{t_\e^2}{2}\left(S_{H,L}^\frac{2n-\mu}{n-\mu+2}+O(\e^{\nu_{s,n}})\right)-\frac{\la t_\e^{22_\mu^*}}{2 2_\mu^*} \left(S_{H,L}^\frac{2n-\mu}{n-\mu+2}-O(\e^n)\right)\\
	&-\frac{\la\widetilde{C}R_1^{22_\mu^*-1}}{22_\mu^*} \widetilde{C} T_0 \e^\frac{n-2}{2}+T_1T_2R_2^\rho \e^\frac{(n-2)\rho}{2} +o\left(\e^\frac{n-2}{2}\right)\\
	\leq & \sup_{t \geq 0}g_1(t)-\frac{\la\widetilde{C}R_1^{22_\mu^*-1}}{22_\mu^*} \widetilde{C} T_0 \e^\frac{n-2}{2}+T_1T_2R_2^\rho \e^\frac{(n-2)\rho}{2} +o\left(\e^\frac{n-2}{2}\right),
\end{align*}
where $g_1(t)=\ds\frac{t^2}{2}\left(S_{H,L}^\frac{2n-\mu}{n-\mu+2}+O(\e^{\nu_{s,n}})\right)-\frac{\la t^{22_\mu^*}}{2 2_\mu^*} \left(S_{H,L}^\frac{2n-\mu}{n-\mu+2}-O(\e^n)\right)$. On trivial computation, we get
\begin{align*}
	\Phi_{K_v}(v+ t u_\e) -\Phi_{K_v}(v) \leq& 	\ds\frac12 \left(\frac{n-\mu+2}{2n-\mu}\right) \left(\frac{S_{H,L}^\frac{2n-\mu}{n-\mu+2}}{\la^\frac{n-2}{n-\mu+2}}\right)+O(\e^{\nu_{s,n}}) -C \e^\frac{n-2}{2}+o(\e^\frac{n-2}{2})
\end{align*}
for an appropriate constant $C>0$. Thus, for $\e $ sufficiently small and owing to the assumption $n+2s <6$, we obtain 
\begin{align*}
		\Phi_{K_v}(v+ t u_\e) -\Phi_{K_v}(v) \leq& 	\ds\frac12 \left(\frac{n-\mu+2}{2n-\mu}\right) \left(\frac{S_{H,L}^\frac{2n-\mu}{n-\mu+2}}{\la^\frac{n-2}{n-\mu+2}}\right).
\end{align*}
This completes the proof. \QED
	\end{proof}	
\begin{Proposition}\label{p4.14}
	Assuming $n+2s<6
	$, there exists two distinct solutions to $(\hat{P}_\la)$, for any $\la \in (0,\La)$.
\end{Proposition}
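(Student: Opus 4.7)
The plan is to apply the nonsmooth linking theorem (Theorem \ref{t2.13}) to $\Phi_{K_v}$, where $v$ denotes the first weak solution of $(\hat{P}_\la)$ obtained in Theorem \ref{t4.8}. Since $v$ is itself a weak solution of $(\hat{P}_\la)$, it is in particular a subsolution, so Proposition \ref{p4.2}(i) guarantees that any $u \in D(\Phi_{K_v})$ with $0 \in \partial^-\Phi_{K_v}(u)$ is a weak solution. Hence the task reduces to producing a critical point of $\Phi_{K_v}$ distinct from $v$ via a min-max procedure.

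The mountain pass structure comes from two ingredients. First, since $K_v \subset K_0$ and $v$ is a local minimizer of $\Phi_{K_0}$ (by Theorem \ref{t4.8}), $v$ is a local minimizer of $\Phi_{K_v}$ as well, so for a sufficiently small $r_0>0$ the relatively closed set
\[
A := \{u \in K_v : \|u-v\| = r_0\}
\]
satisfies $\inf_A \Phi_{K_v} \geq \Phi_{K_v}(v)$. Second, because $22_\mu^\ast>2$ and $u_\e \geq 0$, for each fixed small $\e>0$ one has $\Phi_{K_v}(v+tu_\e) \to -\infty$ as $t \to +\infty$; fix $t_0>0$ large enough that $\|t_0 u_\e\| > r_0$ and $\Phi_{K_v}(v+t_0 u_\e) \leq \Phi_{K_v}(v)$. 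In Theorem \ref{t2.13} take $B^1=[-1,1]$, $S^0=\{-1,+1\}$, and $\varphi(-1)=v$, $\varphi(+1)=v+t_0 u_\e$. The linking conditions $A \cap \varphi(S^0) = \emptyset$ and $\inf_A \Phi_{K_v} \geq \sup_{\varphi(S^0)}\Phi_{K_v} = \Phi_{K_v}(v)$ hold by construction, while $A \cap \psi(B^1)\neq\emptyset$ for every admissible path $\psi$ because any continuous curve from $v$ to $v+t_0 u_\e$ in $K_v$ must cross the sphere of radius $r_0$ around $v$.

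The min-max value
\[
d = \inf_{\psi \in \Sigma}\sup_{\tau \in [-1,1]}\Phi_{K_v}(\psi(\tau))
\]
is bounded above by taking the straight path, giving $d \leq \sup_{\tau \in [0,1]}\Phi_{K_v}(v+\tau t_0 u_\e) \leq \sup_{t\geq 0}\Phi_{K_v}(v+tu_\e)$. By Lemma \ref{l4.13}, under the hypothesis $n+2s<6$ and for $\e>0$ sufficiently small, this supremum is strictly below $\Phi_{K_v}(v)+\frac{1}{2}\bigl(\frac{n-\mu+2}{2n-\mu}\bigr)\bigl(\frac{S_{H,L}^{(2n-\mu)/(n-\mu+2)}}{\la^{(n-2)/(n-\mu+2)}}\bigr)$. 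Hence $(CPS)_d$ holds by Proposition \ref{p4.10}, and Theorem \ref{t2.13} yields a critical point $u$ of $\Phi_{K_v}$ at level $d$. By Proposition \ref{p4.2}(i) this $u$ is a weak solution of $(\hat{P}_\la)$, and Lemma \ref{exi} then produces the corresponding second weak solution $u+\hat u$ of $(P_\la)$ (distinct from the solution associated with $v$).

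The main obstacle is to ensure that $u \neq v$. If $d > \Phi_{K_v}(v)$ this is automatic since $\Phi_{K_v}(u)=d \neq \Phi_{K_v}(v)$. In the borderline case $d = \inf_A \Phi_{K_v} = \Phi_{K_v}(v)$ one invokes the final clause of Theorem \ref{t2.13}, which localizes the critical point in $A$ and therefore at distance exactly $r_0$ from $v$. A secondary delicate point, already isolated in Lemma \ref{l4.13}, is the fine balance of the three competing scales: the concentration profile $\|u_\e\|^2 \sim S_{H,L}^{(2n-\mu)/(n-\mu+2)}$, the singular correction of order $\e^{(n-2)\rho/2}$ coming from the nondifferentiability of $F(x,\cdot)$ at $u=0$, and the gain $-C\e^{(n-2)/2}$ produced by the interaction with $v>0$; the dimensional restriction $n+2s<6$ is precisely what makes $\e^{\nu_{s,n}}$ negligible compared with $\e^{(n-2)/2}$, and this is the only place where that assumption enters the present argument.
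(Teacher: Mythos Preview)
Your proposal is correct and follows essentially the same route as the paper's proof: both use Theorem~\ref{t4.8} to get the local minimizer $v$ of $\Phi_{K_v}$, set up the mountain pass geometry with the sphere $A$ around $v$ and the endpoint $v+t_0u_\e$, bound the min-max level via Lemma~\ref{l4.13}, apply Proposition~\ref{p4.10} for $(CPS)_d$, invoke Theorem~\ref{t2.13} (including its final clause for the borderline case $d=\Phi_{K_v}(v)$), and conclude by Proposition~\ref{p4.2}. One small technical slip: your set $A$ should be taken in $D(\Phi_{K_v})$ rather than in $K_v$, since Theorem~\ref{t2.13} requires $A$ to be a relatively closed subset of the effective domain.
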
	
	\begin{proof}
	From Theorem \ref{t4.8}, we have $v$ is a local minimizer of $\Phi_{K_v}$. This imply that there exists $ \zeta >0$ such that $ \Phi_{K_v}(w) \geq \Phi_{K_v}(v)$ for every $w \in K_v$ with $\|w-v\| \leq \zeta$. Let $u =u_\e$ for $\e $ obtained in Lemma \ref{l4.13}. Since $ \Phi_{K_v}(v+t u) \ra -\infty$ as $t \ra \infty$, so choose $t \geq \zeta/\|u\|$ such that $\Phi_{K_v}(v+t u) \leq \Phi_{K_v}(v)$. Now	define
	\begin{align*}
		\Sigma = \{ \Psi \in C([0,1], D(\Phi_{K_v})):\Psi(0)=v,~\Psi(1)=v+tu \},
	\end{align*}
\begin{align*}
	A =\{w \in D(\Phi_{K_v}):\|w-v\| =\al\}~\text{and}~d =\inf_{\Psi \in \Sigma} \sup_{r \in [0,1]} \Phi_{K_v}(\Psi(r)).
\end{align*}
Combining Proposition \ref{p4.10} and Lemma \ref{l4.13}, $\Phi_{K_v}$ satisfies $(CPC)_d$ condition. If $d =\Phi_{K_v}(v) =\inf \Phi_{K_v}(A)$, then $v \not \in A$, $v+t u \not \in A$, $\inf \Phi_{K_v}(A)\geq \Phi_{K_v}(v)\geq \Phi_{K_v}(v+tu)$, and for every $\Psi \in \Sigma$, there exists $r \in [0,1]$ such that $\|\Psi(r)-v\|=\zeta$. Thus by Theorem \ref{t2.13}, we get there exists $ w \in D(\Phi_{K_v})$ such that $w \ne v$, $\Phi_{K_v} (w)=d$ and $0 \in \partial ^- \Phi_{K_v}(w)$. Using Proposition \ref{p4.2}, we obtain that $w$ is a positive weak solution to $(\hat{P}_\la)$. \QED
		\end{proof}
\textbf{End of Proof of Theorem \ref{tmr}:} Combining Lemma \ref{l3.6}, Theorem \ref{t4.8} and Proposition \ref{p4.14}, the proof of Theorem \ref{tmr} is complete. \QED
	
		\textbf{Acknowledgement:} The first author thanks the CSIR(India) for financial support in the form of a Senior Research Fellowship, Grant Number $09/086(1406)/2019$-EMR-I. The second author was partially funded by IFCAM (Indo-French Centre for Applied Mathematics) IRL CNRS 3494.		
		

	\end{document}